\title{Generating the Johnson filtration}
\author{Thomas Church\thanks{Supported in part by NSF grants DMS-1103807 and DMS-1350138}\ \ and Andrew Putman\thanks{Supported in part by NSF grant DMS-1255350 and the Alfred P.\ Sloan Foundation}\vspace{-6pt}}
\date{}
\theoremstyle{plain}
\newtheorem{theorem}{Theorem}[section]
\newtheorem{maintheorem}{Theorem}
\newtheorem*{boundedgentheorem}{Theorem~\ref{maintheorem:boundedgeneration}}
\newtheorem{proposition}[theorem]{Proposition}
\newtheorem{lemma}[theorem]{Lemma}
\newtheorem{claims}{Claim}
\newcommand\BeginClaims{\setcounter{claims}{0}}
\theoremstyle{definition}
\newtheorem{definition}[theorem]{Definition}
\theoremstyle{remark}
\newtheorem{remark}[theorem]{Remark}
\DeclareMathOperator{\Hom}{Hom}
\DeclareMathOperator{\Image}{Im}
\DeclareMathOperator{\Mod}{Mod}
\DeclareMathOperator{\AF}{AF}
\newcommand\Torelli{\ensuremath{{\mathcal I}}}
\DeclareMathOperator{\IA}{IA}
\DeclareMathOperator{\Sp}{Sp}
\DeclareMathOperator{\GL}{GL}
\DeclareMathOperator{\SL}{SL}
\newcommand\SpLie{\ensuremath{\mathfrak{sp}}}
\newcommand\GLLie{\ensuremath{\mathfrak{gl}}}
\newcommand\Z{\ensuremath{\mathbb{Z}}}
\newcommand\N{\ensuremath{\mathbb{N}}}
\DeclareMathOperator{\HH}{H}
\DeclareMathOperator{\Aut}{Aut}
\DeclareMathOperator{\Interior}{Int}
\newcommand\Set[2]{\ensuremath{\{\text{#1 $|$ #2}\}}}
\newcommand\Figure[4]{
\begin{figure}[t]
\centering
\centerline{\psfig{file=#2,scale=#4}}
\caption{#3}
\label{#1}
\end{figure}}
\newcommand\ialg{\ensuremath{\widehat{i}}}
\newcommand\FI{\ensuremath{\text{\tt FI}}}
\newcommand\Grp{\ensuremath{\text{\tt Grp}}}
\newcommand\CGrp{\ensuremath{\text{\tt CGrp}}}
\newcommand\One{\ensuremath{\mathbb{I}}}
\newcommand\Zero{\ensuremath{\mathbb{O}}}
\newcommand{\Stab}[2]{\ensuremath{\mathcal{C}(#1,#2)}}
\newcommand{\para}[1]{\medskip\noindent\textbf{#1.}}
\newcommand{\into}{\hookrightarrow}
\newcommand{\onto}{\twoheadrightarrow}
\newcommand{\bwedge}{\textstyle{\bigwedge}}
\newcommand{\abs}[1]{\left\lvert#1\right\rvert}
\newcommand{\normal}{\lhd}
\DeclareMathOperator{\id}{id}
\DeclareMathOperator{\ab}{ab}
\newcommand{\coloneq}{\mathrel{\mathop:}\mkern-1.2mu=}
\newcommand{\Sym}{\ensuremath{\mathfrak{S}}}
\newcommand{\YY}{Y}
\newcommand{\ZZ}{Z}
\newcommand{\Lie}{\ensuremath{\mathcal{L}}}
\DeclareMathOperator{\Der}{Der}
\DeclareMathOperator{\InnerAut}{InnerAut}
\DeclareMathOperator{\InnerDer}{InnerDer}
\DeclareMathOperator{\gr}{gr}
\DeclareMathOperator{\PP}{PP}
\newcommand{\arXiv}[1]{\href{http://arxiv.org/abs/#1}{arXiv:#1}}
\newcommand{\myemail}[1]{\href{mailto:#1}{\nolinkurl{#1}}}
\begin{document}

\maketitle

\vspace{-20pt}
\begin{abstract}
For $k \geq 1$, let $\Torelli_g^1(k)$ be the $k^{\text{th}}$ term in the Johnson filtration
of the mapping class group of a genus $g$ surface with one boundary component.  
We prove that for all $k \geq 1$, there exists some $G_k\geq 0$ such that $\Torelli_g^1(k)$ is generated
by elements which are supported on subsurfaces whose genus is at most $G_k$.  We also prove
similar theorems for the Johnson filtration of $\Aut(F_n)$ and
for certain mod-$p$ analogues of the Johnson filtrations of both the mapping class group and
of $\Aut(F_n)$.  The main tools used in the proofs are the related theories of FI-modules (due to the
first author with Ellenberg and Farb) and central stability (due to the second author), both
of which concern the representation theory of the symmetric groups over $\Z$.
\end{abstract}

\section{Introduction}

In this paper, we use techniques from representation theory to prove that the terms of the Johnson filtrations
of both the mapping class group and the automorphism group of a free group are generated by elements
whose complexity is bounded in a sense to be made precise below.

\para{Mapping class group} Let $\Sigma_g^k$ denote a compact oriented genus $g$ surface with $k$ boundary components. 
Let $\Mod_g^1$ be the \emph{mapping class group} of $\Sigma_g^1$, i.e.\ the group of isotopy classes of orientation-preserving
homeomorphisms of $\Sigma_g^1$ that restrict to the identity on $\partial \Sigma_g^1$.  

Choosing a basepoint
$\ast \in \partial \Sigma_g^1$, the group $\Mod_g^1$ acts on $\pi\coloneq \pi_1(\Sigma_g^1,\ast)$.  For a group $G$,
let $\gamma_k(G)$ be the $k^{\text{th}}$ term in the lower central series of $G$, so $\gamma_1(G) = G$
and $\gamma_{k+1}(G) = [\gamma_k(G),G]$ for $k \geq 1$.
The action of $\Mod_g^1$ on $\pi$ preserves $\gamma_k(\pi)$, so there is
an induced action of $\Mod_g^1$ on $\pi / \gamma_k(\pi)$.  The $k^{\text{th}}$ term of the \emph{Johnson
filtration} of $\Mod_g^1$, denoted $\Torelli_g^1(k)$, is the kernel of the action of $\Mod_g^1$
on $\pi / \gamma_{k+1}(\pi)$.  The Johnson filtration was defined by Johnson in \cite{JohnsonSurvey} and 
has connections to number theory (see Matsumoto~\cite{Matsumoto})
and $3$-manifolds (see Garoufalidis--Levine~\cite{GaroufalidisLevine}); however, many basic questions about it remain open.

\para{Generators in low degree}
Let $T_x \in \Mod_g^1$ denote the Dehn twist about a simple closed curve $x$ on $\Sigma_g^1$. It was proved independently by Lickorish~\cite{LickorishTwists} and Mumford~\cite{MumfordAbelianQuotients}, building on the work of Dehn, that $\Mod_g^1$ is generated by Dehn twists about nonseparating simple closed curves.

Let $\Torelli_g^1 \coloneq \Torelli_g^1(1)$.  The group $\Torelli_g^1$ is known as the 
\emph{Torelli group}; it is the kernel of the action
of $\Mod_g^1$ on $\pi / \gamma_2(\pi) \cong \HH_1(\Sigma_g^1;\Z)$.
A \emph{genus $\ell$ bounding pair map} is a product $T_y T_z^{-1}$, where $y$ and $z$
are disjoint nonseparating simple closed curves on $\Sigma_g^1$ whose union $y \cup z$ separates
$\Sigma_g^1$ into two subsurfaces, one homeomorphic to $\Sigma_\ell^2$ and the other to $\Sigma_{g-\ell-1}^3$
(see Figure~\ref{figure:torelligen}).
Making essential use of work of Powell~\cite{PowellTorelli}, Johnson~\cite{JohnsonFirst} proved that $\Torelli_g^1$
is generated by genus $1$ bounding pair maps for $g \geq 3$.  
See \cite{PutmanCutPasteTorelli} and Hatcher--Margalit \cite{HatcherMargalit} for modern proofs of the necessary results
of Powell.

\Figure{figure:torelligen}{TorelliGen}{A genus $3$ separating twist $T_x$ and a genus $1$ bounding
pair map $T_y T_z^{-1}$.}{115}

The group $\Torelli_g^1(2)$ is known as the \emph{Johnson kernel}.  A \emph{genus $\ell$ separating twist}
is a mapping class $T_x$, where $x$ is a simple closed curve that separates $\Sigma_g^1$ into two
subsurfaces, one homeomorphic to $\Sigma_{\ell}^1$ and the other to $\Sigma^2_{g-\ell}$
(see Figure~\ref{figure:torelligen}).
Johnson~\cite{JohnsonII} proved that $\Torelli_g^1(2)$ is generated by genus $1$ and $2$ separating twists. 

\para{Higher degree}
For $k \geq 3$, no interesting generating set for $\Torelli_g^1(k)$ is known (of course, one could do uninteresting 
things like
taking the entire group as a generating set).  An appealing feature of the generating sets above is that the generators are ``simple'', in the sense that 
they are supported on small 
subsurfaces (i.e.\ subsurfaces with 1 boundary component and bounded genus).  Our first main theorem says that for every $k\geq 1$ the group
$\Torelli_g^1(k)$ can be generated by elements supported on subsurfaces of uniformly-bounded size.

In fact, we can be somewhat more precise.  Fix a symplectic basis $\{a_1,b_1,\ldots,a_g,b_g\}$
for $\HH_1(\Sigma_g^1;\Z) \cong \Z^{2g}$, i.e.\ a free basis such that
\[\ialg(a_i,a_j) = \ialg(b_i,b_j) = 0 \quad \text{and} \quad \ialg(a_i,b_j) = \delta_{ij},\]
where $\ialg(\cdot,\cdot)$ is the algebraic intersection pairing. Say that a subsurface $S$ of $\Sigma_g^1$
is \emph{homologically standard} if $S$ has one boundary component and the
image of $\HH_1(S;\Z)$ in $\HH_1(\Sigma_g^1;\Z)$ is $\langle a_{i}, b_{i} \text{ $|$ } i \in I \rangle$
for some $I \subset \{1,\ldots,g\}$.  Our theorem is then as follows.

\begin{maintheorem}[Generators for Johnson filtration]
\label{maintheorem:modgentorelli}
For every $k \geq 1$, there exists some $G_k \geq 0$ such that for all $g \geq 1$, the group $\Torelli_g^1(k)$ is generated
by elements which are supported on homologically standard subsurfaces of $\Sigma_g^1$ whose
genus is at most $G_k$.
\end{maintheorem}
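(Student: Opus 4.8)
The plan is to reformulate the statement as one about finite generation of a sequence of groups under the symmetric group action, and then invoke the machinery of FI-modules and central stability. First I would set up the "category of surfaces with marked handles": for each finite set $I$, let $\Sigma_I$ be a surface with one boundary component and one handle for each element of $I$, so that an injection $I \hookrightarrow J$ gives an embedding $\Sigma_I \hookrightarrow \Sigma_J$ and hence a map $\Torelli(\Sigma_I)(k) \to \Torelli(\Sigma_J)(k)$. The key observation is that, because homologically standard subsurfaces of $\Sigma_g^1$ correspond exactly to the subsets $I \subset \{1,\dots,g\}$, the conclusion of the theorem is precisely the assertion that the functor $I \mapsto \Torelli_{\abs{I}}^1(k)$ — or rather its abelianization, or the associated "generation module" — is generated in bounded degree as an FI-group (or as a module exhibiting central stability).

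Next I would identify the relevant module. The natural candidate is $\gr(k)$, the degreewise pieces of the Johnson filtration, or more directly the quotient $\Torelli_g^1(k) / \Torelli_g^1(k) \cdot [\text{stuff supported on bounded subsurfaces}]$; showing this quotient vanishes for large $g$ is equivalent to the theorem. To apply the representation-stability technology, I would show that these pieces, viewed as a sequence of $\Sp_{2g}(\Z)$-representations (or symmetric-group representations after restriction), are finitely generated over the appropriate category. The crucial input here is Johnson's computation that $\gr^1$ of the Johnson filtration injects into $\bigwedge^3 H$ (and more generally that $\gr^k$ embeds into a space built functorially out of $H = \HH_1(\Sigma_g^1)$, e.g.\ a subquotient of $H^{\otimes(k+2)}$ via the Johnson homomorphisms), so that these graded pieces are automatically finitely generated as FI-modules — polynomial functors of $H$ are the prototypical finitely generated FI-modules. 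Then a Noetherianity/finite-generation argument propagates from the graded pieces back to the groups $\Torelli_g^1(k)$ themselves by induction on $k$, using that $\Torelli_g^1(k)/\Torelli_g^1(k+1) \hookrightarrow \Der$-type modules and that $\Torelli_g^1(k+1)$ is handled by the inductive hypothesis.

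I would then translate "finitely generated in degree $\le d$ as an FI-group/central-stability module" into the geometric conclusion: a finite generating set concentrated in the images $\Sigma_I$ with $\abs{I} \le d$ means precisely a generating set supported on homologically standard subsurfaces of genus $\le d$, so one takes $G_k = d$. One technical point is that the Johnson filtration terms are not abelian, so one cannot directly speak of them as "modules"; the fix is to work with the notion of a "finitely generated FI-group" or to pass to a presentation and bound both generators and relations in bounded degree — this is where the central stability formalism (which handles non-abelian coefficient systems and presentations) does the real work, feeding in Putman's stability results for the relevant congruence-type subgroups.

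The hard part will be establishing the finite-generation (Noetherian) statement for the Johnson filtration terms in a form strong enough to deduce bounded generation — not just bounded generation of the graded pieces $\gr^k$, but of the groups themselves, uniformly in $g$. The graded pieces are controlled by Johnson-homomorphism images inside tensor powers of $H$, which are manifestly polynomial and hence finitely generated; but lifting this control through the (infinitely many, a priori) extensions $\Torelli_g^1(k) \twoheadrightarrow \Torelli_g^1(k)/\Torelli_g^1(k+1)$ requires a genuinely uniform argument, and this is exactly where FI-module Noetherianity (the first author's work with Ellenberg and Farb) and the central stability machinery must be combined: one needs that subgroups/quotients/extensions of finitely generated objects stay finitely generated with controlled degree, applied inductively starting from $k=1$ where the classical generation results of Johnson provide the base case.
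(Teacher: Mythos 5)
Your overall strategy does match the paper's (Johnson homomorphisms make the graded pieces of the filtration finite-rank abelian, FI-module Noetherianity plus central stability propagate bounded generation inductively in $k$), but there are two genuine gaps. First, your setup assumes that every injection $I \into J$ induces an embedding of surfaces and hence a homomorphism $\Torelli(\Sigma_I)(k) \to \Torelli(\Sigma_J)(k)$, i.e.\ that these groups form an honest FI-group. This is impossible: an FI-group structure would give a faithful action of $\Sym_g$ on $\Mod_g^1$ permuting the handle subgroups, contradicting the Ivanov--McCarthy computation of $\Aut(\Mod_g^1)$ (every finite group of automorphisms is cyclic or dihedral). For a general injection one only gets a homomorphism well-defined up to conjugacy, so one must work with a weaker, ``up to conjugacy'' notion of FI-group, and then bounded generation becomes delicate: it has to be verified for the specific chosen stabilization maps and for \emph{every} finite set $J$, since one cannot transport a generation statement for $J=[g]$ along maps that are only defined up to conjugacy. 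Making this work requires choosing a concrete compatible system of subsurfaces $S_I$ inside a single infinite-genus surface and proving that two embeddings inducing the same map on $\HH_1$ are conjugate by an element of the \emph{Torelli} group (not merely the mapping class group), so that the maps on Torelli groups are well defined and, crucially, so that the conjugates produced by the machinery carry $S_I$ to homologically standard subsurfaces. Your translation of the conclusion also drops this conjugation: the machinery yields only \emph{normal} generation by subgroups supported on the $S_I$, and it is exactly the step ``Torelli-conjugates of $S_I$ are homologically standard'' that produces the statement of the theorem.

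Second, the base case $k=1$ is not covered by ``the classical generation results of Johnson.'' Johnson's generators are bounding pair maps, supported on genus-one subsurfaces with two boundary components; what the induction needs is that $\Torelli_g^1$ is generated by the specific subgroups $\Torelli_g^1(S_I)$ with $\abs{I}=3$, i.e.\ by elements supported on the standardly embedded genus-$3$ subsurfaces with \emph{one} boundary component. Even Putman's small-generating-set theorem does not suffice as stated, since his generators live on genus-$3$ subsurfaces with several boundary components; one must first strengthen it (the paper does this via handle slides, the Birman exact sequence, and the Fadell--Neuwirth sequences) to push the support into the one-boundary subsurfaces $S_I$. Without this input, the bounded-generation hypothesis of the central-filtration theorem fails to be verified and the induction on $k$ never starts; with it, the rest of your outline does go through essentially as in the paper.
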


\begin{remark}
We emphasize that the constant $G_k$ in Theorem~\ref{maintheorem:modgentorelli} 
depends only on $k$, not on $g$.  Otherwise, the theorem would be rather trivial!
\end{remark}

\noindent
Somewhat surprisingly, our proof of Theorem~\ref{maintheorem:modgentorelli} is purely an existence proof; it gives no information about how large the constants $G_k$ must be.  The following theorem, however, implies that the bounds $G_k$ must tend to infinity.

\begin{maintheorem}[Lower bound on genus]
\label{maintheorem:modnongentorelli}
For all $k \geq 1$ and $g > k$, the group $\Torelli_g^1(k)$ is not generated by elements supported on subsurfaces with one boundary component and genus less than $\frac{k}{2}$.
\end{maintheorem}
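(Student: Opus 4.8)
The plan is to exhibit, for each $k \geq 1$ and $g > k$, a homomorphism out of $\Torelli_g^1(k)$ which kills every mapping class supported on a subsurface of genus $< \frac{k}{2}$ but which is nontrivial on $\Torelli_g^1(k)$ itself; the existence of such a homomorphism shows that the subsurface-supported elements cannot generate. The natural candidate is the $k^{\text{th}}$ \emph{Johnson homomorphism} $\tau_k \colon \Torelli_g^1(k) \to \Hom(\HH, \gamma_{k+1}(\pi)/\gamma_{k+2}(\pi))$, where $\HH = \HH_1(\Sigma_g^1;\Z)$ and $\gamma_{k+1}(\pi)/\gamma_{k+2}(\pi)$ is the degree-$(k+1)$ piece of the free Lie algebra on $\HH$. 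Recall that $\tau_k$ is defined by sending $\phi \in \Torelli_g^1(k)$ to the map $x \mapsto \phi(\tilde x)\tilde x^{-1}$, where $\tilde x$ is any lift of $x \in \HH$ to $\pi$; since $\phi$ acts trivially on $\pi/\gamma_{k+1}(\pi)$, this lands in $\gamma_{k+1}(\pi)/\gamma_{k+2}(\pi)$, and $\tau_k$ is a homomorphism with kernel exactly $\Torelli_g^1(k+1)$. In particular $\tau_k$ is nontrivial on $\Torelli_g^1(k)$ as long as the filtration does not stabilize at stage $k$, which holds for $g$ large relative to $k$ (and one should pin this down for the precise range $g > k$).

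The key computation is a support estimate for $\tau_k$. If $\phi$ is supported on a subsurface $S$ with one boundary component and genus $h$, then $\phi$ acts trivially on the subgroup $\pi_1$ generated by loops avoiding $S$, and its Johnson image $\tau_k(\phi)$ is supported — in an appropriate sense — on the sub-Lie-algebra generated by the classes in $\HH_1(S)$, which has rank $2h$. More precisely, $\tau_k(\phi)(x) = 0$ whenever $x$ pairs trivially with $\HH_1(S)$, and $\tau_k(\phi)(x)$ always lies in the degree-$(k+1)$ part of the free Lie algebra on the rank-$2h$ summand $\HH_1(S) \subset \HH$. So if $\Torelli_g^1(k)$ were generated by elements each supported on some subsurface of genus $< \frac k2$, then every element of the image of $\tau_k$ would be a sum of Johnson images each ``localized'' to a Lie subalgebra on at most $k-1$ symplectic generators. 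The plan is then to show this is impossible: one produces an explicit element in the image of $\tau_k$ — or shows abstractly via the known structure of $\Image(\tau_k) \otimes \Q$ as an $\Sp_{2g}(\Z)$-representation — that cannot be so localized. For instance, one can build a mapping class from a commutator of bounding-pair maps and separating twists arranged so that $\tau_k$ of it has the form of a degree-$(k+1)$ Lie bracket genuinely involving $k+1$ distinct symplectic generators $a_{i_1},\ldots,a_{i_{k+1}}$ (using $g > k$ to have enough room), and observe that such a bracket cannot be written as a sum of brackets each supported on $\leq k-1$ generators plus the relations coming from the Jacobi identity and the target of $\tau_k$.

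The main obstacle I expect is the last step: genuinely certifying that a specific element of $\Image(\tau_k)$ is not a sum of ``low-genus-supported'' Johnson images. The difficulty is that a single element of the free Lie algebra on $\HH$ can be rewritten in many ways via the Jacobi identity, so ``involves $k+1$ generators'' is not literally a basis-independent obstruction; one must work with a genuinely invariant quantity. The cleanest route is probably to pass to $\Q$-coefficients and use Morita's and Hain's descriptions of $\Image(\tau_k) \otimes \Q$ together with a weight/partition argument: the span of Johnson images of elements supported on genus-$h$ subsurfaces lies in the $\GL$-isotypic pieces whose partitions have at most $2h$ rows, and one exhibits an irreducible constituent of $\Image(\tau_k)\otimes\Q$ requiring more than $k-1$ rows, which exists precisely because the relevant $\mathrm{GL}$- or $\mathrm{Sp}$-representation is ``as wide as'' degree $k+1$ allows — here the bound $\tfrac k2$ for genus corresponds to $k$ rows, and the needed constituent (e.g.\ one built from a highest weight supported on $k+1$ fundamental weights, or the image of the bracket $[\cdots[a_1,a_2],\cdots,a_{k+1}]$-type element) has at least $k+1$ rows. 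Carrying this out requires care about which partitions actually occur in $\Image(\tau_k)$, so the bulk of the work is a representation-theoretic verification rather than a geometric one.
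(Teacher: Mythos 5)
Your overall strategy (apply the Johnson homomorphism $\tau_k$, observe that elements supported on a genus-$h$ subsurface have image confined to the rank-$2h$ summand, then exhibit an element of the image that cannot be so localized) is the same as the paper's, but the two steps you yourself flag as the hard ones are precisely where the argument is missing, and the route you propose for the main one would not go through. The decomposition of $\Image(\tau_k)\otimes\Q$ into irreducibles is \emph{not} known for general $k$ --- the paper explicitly calls determining $\Image(\tau)$ ``a fundamental and difficult problem'' --- so an argument that needs to certify which partitions occur in $\Image(\tau_k)\otimes\Q$ cannot be carried out from Morita's or Hain's results. The paper sidesteps this entirely with one concrete device you do not have: it composes $\tau_k$ with the map $\rho\colon \Lie_{k+1}(H)\to H\otimes\bwedge^k H$ obtained from the Poincar\'e--Birkhoff--Witt embedding $\Lie_{k+1}(H)\into H^{\otimes k+1}$ followed by the projection onto $H\otimes\bwedge^k H$. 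Then localization to a rank-$2h$ summand kills $\widehat{\tau}_k\coloneq\rho\circ\tau_k$ for the trivial reason that $\bwedge^k$ of a rank-$(2h<k)$ module vanishes, and nonvanishing of $\widehat{\tau}_k$ on a specific element is checked by an explicit computation (apply the dual functional $a_1^*$ to the first tensor factor of $\rho([\lambda,a_{k+1}])$ with $\lambda=[[\cdots[a_1,a_2],\cdots],a_k]$, leaving $a_2\wedge\cdots\wedge a_{k+1}\neq 0$). This replaces the basis-dependence/Jacobi-identity problem you worry about with a two-line linear-algebra verification; no information about the $\Sp$-module structure of $\Image(\tau_k)$ is needed.

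There is a second gap: you still must produce an element of $\Torelli_g^1(k)$ whose Johnson image is the desired bracket. Your suggestion to build it from commutators of bounding-pair maps is the right instinct, but it needs an actual mechanism, and the obvious one fails: unlike the $\Aut(F_n)$ case, inner derivations are \emph{not} in the image of $\tau$ for the mapping class group. The paper instead uses the ``point-pushing'' Lie algebra map $\PP\colon\Lie(H)\to\Der(\Lie(H))$ with $\PP_1(x)(h)=[x,h]+\ialg(h,x)\omega$, invokes Johnson's computation that $\Image(\PP_1)\subset\tau_1(\Torelli_g^1)$ (realized by genus $g-1$ bounding pairs), and then restricts to the isotropic subspace $L=\langle a_1,\ldots,a_g\rangle$ so that the correction term $\ialg(\cdot,\cdot)\omega$ disappears and $\PP(\lambda)(a_{k+1})=[\lambda,a_{k+1}]$; the hypothesis $g>k$ is what provides $a_{k+1}\in L$. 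Without this (or an equivalent construction), the existence of the non-localizable element in $\Image(\tau_k)$ remains unproven in your sketch.
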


\para{Automorphism groups of free groups}
The Johnson filtration can also be defined on the automorphism group $\Aut(F_n)$ of the free group $F_n=\langle x_1,\ldots,x_n\rangle$.
Let $\IA_n(k)$ denote the kernel of the action of $\Aut(F_n)$ on $F_n / \gamma_{k+1}(F_n)$.  The group $\IA_n \coloneq \IA_n(1)$
consists of automorphisms in  $\Aut(F_n)$ acting trivially on $F_n / \gamma_2(F_n) \cong \Z^n$, and is often known as the Torelli subgroup
of $\Aut(F_n)$. Magnus found a finite generating set for $\IA_n$ consisting of the following two types of elements.
\begin{compactitem}
\item For distinct $1 \leq i,j \leq n$, let $c_{ij} \in \IA_n$ be the automorphism defined by
\[c_{ij}(x_\ell) = \begin{cases}
x_j^{-1} x_\ell x_j & \text{if $\ell=i$},\\
x_\ell & \text{otherwise}.
\end{cases}\]
\item For distinct $1 \leq i,j,k \leq n$, let $m_{ijk} \in \IA_n$ be the automorphism defined by
\[m_{ijk}(x_{\ell}) = \begin{cases}
x_{\ell} [x_j,x_k] & \text{if $\ell=i$},\\
x_{\ell} & \text{otherwise}.
\end{cases}\]
\end{compactitem}
Magnus \cite{MagnusGenerators} proved that $\IA_n$ is generated by the automorphisms $c_{ij}$ and $m_{ijk}$; see Bestvina--Bux--Margalit~\cite{BestvinaBuxMargalitIA} and Day--Putman~\cite{DayPutmanGen} for modern proofs of Magnus's theorem.
For $k\geq 2$, a generating set for $\IA_n(k)$ is not known.

\para{Subsurfaces for free groups}
To state a version of Theorem~\ref{maintheorem:modgentorelli} for $\IA_n(k)$, we need an appropriate analogue of
``supported on a subsurface'' for $\Aut(F_n)$.  A \emph{splitting} of $F_n$ consists of subgroups
$A,B < F_n$ such that $F_n$ splits as the free product $F_n = A \ast B$.  The \emph{rank} of a splitting $A \ast B$
is the rank of the free group $A$ (notice that this is different from the rank of the splitting $B \ast A$).
We will say that an element $\varphi \in \Aut(F_n)$ is \emph{supported}
on a splitting $A \ast B$ if $\varphi(A) = A$ and $\varphi|_B = \text{id}$.  
For example, Magnus's generator $c_{ij}$ is supported on a splitting of
rank $2$, and $m_{ijk}$ is supported on a splitting of rank $3$.  
We will prove that for all $k \geq 1$, the group $\IA_n(k)$ is generated by elements supported on
splittings whose rank is uniformly bounded.

Just as for the mapping class group, we will actually prove something a bit more precise.
Let $\{e_1,\ldots,e_n\}$ be the standard basis for $F_n^{\ab}\cong \Z^n$. Say that a splitting 
$A \ast B$ of $F_n$ is \emph{homologically standard} if there is some $I \subset \{1,\ldots,n\}$ such 
that the images of $A$ and $B$ in $F_n^{\ab}$ are $A^{\ab}= \langle e_i \,|\, i \in I \rangle$ 
and $B^{\ab}= \langle e_i \,|\, i \notin I \rangle$.
We then have the following theorem.

\begin{maintheorem}[Generators for Johnson filtration of $\Aut(F_n)$]
\label{maintheorem:autgentorelli}
For every $k \geq 1$, there exists some $N_k \geq 0$ such that for  all $n \geq 1$, the group $\IA_n(k)$ is generated
by elements which are supported on homologically standard splittings whose
rank is at most $N_k$.
\end{maintheorem}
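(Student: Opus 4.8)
The plan is to recast Theorem~\ref{maintheorem:autgentorelli} as the statement that a sequence of groups is \emph{generated in bounded degree} and to prove this by induction on $k$. The coordinate stabilizations $\Aut(F_m)\to\Aut(F_n)$ for $m\leq n$ --- extend an automorphism by the identity on $\langle x_{m+1},\ldots,x_n\rangle$ --- together with the $\Sym_n$-action permuting $x_1,\ldots,x_n$, make $[n]\mapsto\Aut(F_n)$, and its normal subgroups $[n]\mapsto\IA_n(k)$, into functors on the category $\FI$ of finite sets and injections. The image of the coordinate stabilization $\IA_m(k)\to\IA_n(k)$ consists of elements supported on the homologically standard splitting $\langle x_i\mid i\leq m\rangle\ast\langle x_j\mid j>m\rangle$; here one uses that $\gamma_j(A)=A\cap\gamma_j(F_n)$ for a free factor $A<F_n$, so that restricting an element of $\IA_n(k)$ to an invariant free factor lands in $\IA(k)$ of that factor. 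Hence it suffices to find $N_k$ so that for every $n$ the group $\IA_n(k)$ is generated by the images of the maps $\IA_m(k)\to\IA_n(k)$ with $m\leq N_k$; i.e.\ so that the $\FI$-group $\IA_\bullet(k)$ is generated in degree $\leq N_k$.

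I would argue by induction on $k$. The case $k=1$ is Magnus's theorem: his generators $c_{ij}$ and $m_{ijk}$ are supported on the coordinate splittings of ranks $2$ and $3$, so $\IA_\bullet(1)$ is generated in degree $\leq 3$. For the inductive step, assume $\IA_\bullet(k)$ is generated in bounded degree. The $k$-th Johnson homomorphism assembles into a short exact sequence of $\FI$-groups
\[
  1\longrightarrow\IA_\bullet(k+1)\longrightarrow\IA_\bullet(k)\stackrel{\tau_k}{\longrightarrow}\mathfrak{a}_\bullet(k)\longrightarrow 0,
\]
where $\mathfrak{a}_n(k)=\Image(\tau_k)$ is a submodule of $\Hom(\Z^n,\Lie_{k+1}(\Z^n))$ and $\Lie_{k+1}$ is the degree-$(k+1)$ part of the free Lie ring. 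Since $[n]\mapsto\Hom(\Z^n,\Lie_{k+1}(\Z^n))$ is assembled from tensor powers of the standard $\FI$-module $[n]\mapsto\Z^n$, it is a finitely generated $\FI$-module; so by the Noetherianity of $\FI$-modules over $\Z$ (Church--Ellenberg--Farb--Nagpal), its submodule $\mathfrak{a}_\bullet(k)$ is finitely generated, hence finitely presented as an $\FI$-module. The inductive step therefore reduces to showing that the kernel $\IA_\bullet(k+1)=\ker\tau_k$ is generated in bounded degree.

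This last point is the crux of the argument, and where I expect essentially all the difficulty to lie. It is \emph{not} a formal consequence of bounded generation of $\IA_\bullet(k)$ together with finite presentation of $\mathfrak{a}_\bullet(k)$: bounded generation of an $\FI$-group need not pass to kernels of maps onto finitely presented $\FI$-modules, as the example $[F_\bullet,F_\bullet]=\ker(F_\bullet\to\Z^\bullet)$ shows --- the iterated commutator $[x_1,[x_2,\ldots,[x_{n-1},x_n]]]\in\gamma_n(F_n)$ is not a product of elements each supported on fewer than $n$ of the $x_i$, since under the Magnus expansion it contributes a nonzero multilinear degree-$n$ term, namely the Lie element $[X_1,[X_2,\ldots,[X_{n-1},X_n]]]$, which being a Lie element is not a sum of concatenation products of shorter words, whereas the contribution of any product of factors of smaller support is exactly such a sum. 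The structure we have and must exploit is that $\IA_\bullet(k)$ is normal in $\Aut(F_\bullet)$, and that $\Aut(F_\bullet)$ is itself generated in bounded degree and acts on a highly connected complex --- a suitable variant of the complex of partial bases of $F_n$ --- whose connectivity grows with $n$. The decisive step will be a ``generation from an action on a highly connected complex'' argument, in the spirit of the central stability machinery: given an element of $\ker\tau_k$, write it as a product of bounded-rank elements of $\IA_n(k)$; the induced identity in $\mathfrak{a}_n(k)$ is a relation, hence a $\Z$-combination of bounded relations by finite presentation; lift these to bounded-rank elements of $\ker\tau_k$; and use the connectivity of the complex to absorb the conjugations by elements of $\Aut(F_n)$ that would otherwise force the ranks of the remaining factors to grow --- precisely the phenomenon that obstructs the free group example. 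Building the complex, establishing the connectivity estimate, and bookkeeping its interaction with this reduction is the bulk of the work; as with Theorem~\ref{maintheorem:modgentorelli}, it is an existence argument and gives no explicit value for $N_k$.
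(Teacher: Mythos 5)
Your setup (the FI-group structure on $\Aut(F_\bullet)$ and $\IA_\bullet(k)$, Magnus's theorem for the base case, the Johnson homomorphism exact sequence, and the appeal to Noetherianity of FI-modules over $\Z$) matches the paper. But the proposal has a genuine gap exactly where you say the crux lies: the inductive step is not proved, only sketched as a program (build a variant of the complex of partial bases, prove a connectivity estimate growing in $n$, and run a ``generation from a highly connected complex'' argument). None of that is carried out, and none of it is needed. Moreover, you have aimed at the wrong target: you reduce to showing that $\IA_\bullet(k)$ is generated in bounded degree \emph{as an FI-group}, i.e.\ by elements supported on coordinate splittings. The theorem only requires, and the paper only proves, the weaker statement of bounded \emph{normal} generation: $\IA_n(k)$ is generated by $\IA_n$-conjugates of the subgroups supported on coordinate splittings, and such a conjugate is supported on a homologically standard splitting, which is all Theorem~\ref{maintheorem:autgentorelli} asks for. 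Your own free-group example cuts the other way than you intend: $[F_n,F_n]$ is not boundedly generated in the strict FI sense, but it \emph{is} boundedly normally generated (by conjugates of the $[x_i,x_j]$), which is precisely the notion the machinery delivers; insisting on the non-normal version is what makes the step look intractable.

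Once the target is normal generation, the step you call non-formal \emph{is} formal, and this is the content of Theorem~\ref{maintheorem:boundedgeneration}. Writing $G=\IA(1)$, $G(k)=\IA(k)$, and $G(k+1)^{\leq N}$ for the subgroup normally generated by the pieces of $G(k+1)$ supported on sets of size $\leq N$, centrality of the filtration gives the key inclusion $[G,G(k)]\subset G(k+1)^{\leq N}$ for $N\geq A+B_k$: expand $[x,y]$ via the Witt--Hall identities into conjugates of commutators $[w,z]$ with $w$ supported on a set of size $\leq A$ (bounded generation of $G$) and $z$ supported on a set of size $\leq B_k$ (inductive hypothesis), and each such $[w,z]$ lies in $G(k+1)$ supported on the union of the two sets. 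Consequently $V\coloneq G(k)/G(k+1)^{\leq N}$ is an FI-\emph{module}, as is $W\coloneq G(k)/G(k+1)$, and both are boundedly generated in degree $B_k$. Since $W$ has finite rank, Proposition~\ref{proposition:noetheriancentral} makes $W$ centrally stable starting at some $B_{k+1}\geq A+B_k$; the surjection $V\onto W$ is an isomorphism in degrees $\leq B_{k+1}$ by definition of $G(k+1)^{\leq B_{k+1}}$, so Lemma~\ref{lemma:isomorphism} forces it to be an isomorphism in all degrees, i.e.\ $G(k+1)=G(k+1)^{\leq B_{k+1}}$. No simplicial complexes, connectivity theorems, or presentation of the image of $\tau_k$ are used (only that $\IA_n(k)/\IA_n(k+1)$ embeds in $\Hom(\Z^n,\Lie_{k+1}(\Z^n))$, hence has finite rank), so the decisive step you postponed to future work is where your proposal stops short of a proof.
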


\noindent
We will also prove the following analogue of Theorem~\ref{maintheorem:modnongentorelli}.

\begin{maintheorem}[Lower bound on rank]
\label{maintheorem:autnongentorelli}
For all $k \geq 1$ and $n > k$, the group $\IA_n(k)$ is not generated by elements supported on splittings
of rank less than $k$.
\end{maintheorem}

\para{Mod-$\bm{p}$ lower central series}
Fix a prime $p$.  In recent work \cite{CooperThesis}, Cooper has introduced two mod-$p$ analogues of 
the Johnson filtration.  The starting points are two different mod-$p$ analogues of the lower
central series of a group $G$.  If $G'$ is a subgroup of $G$ and $\ell \geq 1$, then denote by $(G')^{\ell}$ the subgroup
of $G$ generated by $\Set{$x^{\ell}$}{$x \in G'$}$.
\begin{compactitem}
\item The \emph{mod-$p$ Stallings filtration} of $G$ is the inductively defined filtration
\[\gamma_1^S(G) = G \quad \quad \text{and} \quad \quad \gamma_{k+1}^S(G) = [\gamma_k^S(G),G] \cdot (\gamma_k^S(G))^p \text{ for $k \geq 1$}.\]
This filtration first appeared in Stallings~\cite{Stallings}.
\item The \emph{mod-$p$ Zassenhaus filtration} of $G$ is defined in terms of the usual lower
central series via the formula
\[\gamma_k^Z(G) = \prod_{i p^j \geq k} (\gamma_i(G))^{p^j}.\]
This filtration first appeared in Zassenhaus~\cite{Zassenhaus}.
\end{compactitem}
If $G$ is finitely generated, the quotients $G / \gamma_k^S(G)$ and $G / \gamma_k^Z(G)$ are both
finite $p$-groups.  We have
\[G / \gamma_2^Z(G) \cong G / \gamma_2^Z(G) \cong \HH_1(G;\Z/p);\]
however, for $k \geq 3$ these two filtrations differ.  

\para{Mod-$\bm{p}$ Johnson filtrations}
We define $\Torelli_g^{1,S}(k)$
and $\Torelli_g^{1,Z}(k)$ to be the kernels of the actions of $\Mod_g^1$ on
$\pi / \gamma_{k+1}^S(\pi)$ and $\pi/\gamma_{k+1}^Z(\pi)$, respectively.  Observe that
both $\Torelli_g^{1,Z}(1)$ and $\Torelli_g^{1,S}(1)$ coincide with the \emph{level-$p$
congruence subgroup} $\Mod_g^1(p)$, that is, the kernel of the action
of $\Mod_g^1$ on $\HH_1(\Sigma_g^1;\Z/p)$.  All the groups $\Torelli_g^{1,S}(k)$ and $\Torelli_g^{1,Z}(k)$  in these filtrations are
finite-index subgroups of $\Mod_g^1$.

Similarly, 
we define $\IA_n^S(k)$ and $\IA_n^Z(k)$ to be the kernels of the actions of $\Aut(F_n)$
on $F_n / \gamma_{k+1}^S(F_n)$ and $F_n / \gamma_{k+1}^Z(F_n)$, respectively.  Both
$\IA_n^S(1)$ and $\IA_n^Z(1)$ coincide with the \emph{level-$p$ congruence subgroup}
$\Aut(F_n,p)$, that is, the kernel of the action of $\Aut(F_n)$ on
$\HH_1(F_n;\Z/p)\cong (\Z/p)^n$.  Again, all of the terms in these filtrations are finite-index subgroups
of $\Aut(F_n)$.

\begin{remark}
Yet another mod-$p$ Johnson filtration was defined by Perron in \cite{Perron} using
the Fox calculus, but Cooper \cite{CooperThesis} proved that Perron's filtration equals
the Zassenhaus filtration.
\end{remark}

\para{Generators for mod-$\bm{p}$ Johnson filtrations}
Cooper \cite{CooperThesis} proved many interesting results about these filtrations.  In
particular, he found simple generating sets for $\Torelli_g^{1,S}(k)$ and
$\Torelli_g^{1,Z}(k)$ for $k=1$ and $k=2$.  We are able to prove analogues 
of Theorems~\ref{maintheorem:modgentorelli} and \ref{maintheorem:autgentorelli} for these filtrations.
Let $\{a_1,b_1,\ldots,a_g,b_g\}$ the standard symplectic basis
for $\HH_1(\Sigma_g^1;\Z/p)$.  Say that a subsurface $S$ of $\Sigma_g^1$
is \emph{$\Z/p$-homologically standard} if $S$ has one boundary component and the
image of $\HH_1(S;\Z/p)$ in $\HH_1(\Sigma_g^1;\Z/p)$ is $\langle a_{i}, b_{i} \,|\, i \in I \rangle$
for some $I \subset \{1,\ldots,g\}$.  We then have the following.

\begin{maintheorem}[Generators for mod-$p$ Johnson filtrations]
\label{maintheorem:modgenlevel}
Fix a prime $p$.  For all $k \geq 1$, there exists some $G_k \geq 0$ (depending
on 
$p$) 
such that for  all $g \geq 1$, both $\Torelli_g^{1,S}(k)$ and $\Torelli_g^{1,Z}(k)$ are generated
by elements which are supported on a $\Z/p$-homologically standard subsurface of $\Sigma_g^1$ of genus $\leq G_k$.
\end{maintheorem}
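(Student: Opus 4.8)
The plan is to deduce Theorem~\ref{maintheorem:modgenlevel} from exactly the same bounded-generation machinery that powers the proof of Theorem~\ref{maintheorem:modgentorelli}, by verifying that the mod-$p$ filtrations of $\pi \coloneq \pi_1(\Sigma_g^1,\ast) \cong F_{2g}$ feed into that machinery just as the ordinary lower central series does. The only features of $\gamma_{k+1}(\pi)$ used in the proof of Theorem~\ref{maintheorem:modgentorelli} are that it is a characteristic subgroup of $\pi$ with finitely generated quotient, and that as the genus varies the quotients $\{\pi/\gamma_{k+1}(\pi)\}_g$ assemble --- via the handle-permuting symmetric group actions and the handle-adding stabilization maps --- into a finitely generated FI-group whose relevant FI-theoretic invariants are bounded in terms of $k$; the group $\Torelli_g^1(k)$ is then by definition the kernel of the action of $\Mod_g^1$ on this FI-group, and the machinery produces the bound $G_k$. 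So it suffices to supply, for each of the towers $\{\pi/\gamma_{k+1}^S(\pi)\}_g$ and $\{\pi/\gamma_{k+1}^Z(\pi)\}_g$, the same package of properties.

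First I would record that for any group $G$ the subgroups $\gamma_k^S(G)$ and $\gamma_k^Z(G)$ are characteristic: each is built from $G$ using only the functorial operations $H \mapsto [H,G]$ and $H \mapsto H^{\ell}$ and the formation of products of subgroups, all of which commute with every automorphism of $G$. Next, the elementary containments $\gamma_{k+1}^S(\pi) \supseteq \gamma_{k+1}(\pi)$ and $\gamma_{k+1}^Z(\pi) \supseteq \gamma_{k+1}(\pi)$ follow directly from the definitions --- by induction on $k$ in the Stallings case, and by retaining the term with $i = k+1$ and $j = 0$ in the defining product in the Zassenhaus case. Hence the natural surjections $\pi/\gamma_{k+1}(\pi) \twoheadrightarrow \pi/\gamma_{k+1}^S(\pi)$ and $\pi/\gamma_{k+1}(\pi) \twoheadrightarrow \pi/\gamma_{k+1}^Z(\pi)$ exist; they are $\Aut(\pi)$-equivariant because the filtrations are characteristic; and they are manifestly compatible with the handle-adding maps as $g$ grows. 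Thus they are morphisms of FI-groups, exhibiting each mod-$p$ tower as an FI-quotient of the lower-central-series tower. Since $\pi/\gamma_{k+1}^S(\pi)$ and $\pi/\gamma_{k+1}^Z(\pi)$ are finite $p$-groups (as noted above, following Cooper) and are FI-quotients of the finitely generated FI-group $\{\pi/\gamma_{k+1}(\pi)\}_g$, they are themselves finitely generated FI-groups with the needed invariants bounded in terms of $k$ and the prime $p$; and $\Torelli_g^{1,S}(k)$ and $\Torelli_g^{1,Z}(k)$ are, by definition, the kernels of the $\Mod_g^1$-actions on them.

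With these verifications in hand I would apply the bounded-generation theorem verbatim to each of the two FI-groups. The finiteness of the indices $[\Mod_g^1 : \Torelli_g^{1,S}(k)]$ and $[\Mod_g^1 : \Torelli_g^{1,Z}(k)]$ plays no role; the machinery only sees the FI-group structure and the kernel of the action. The outcome is a constant $G_k$, depending on $p$ precisely because the input towers do, so that for all $g$ both $\Torelli_g^{1,S}(k)$ and $\Torelli_g^{1,Z}(k)$ are generated by elements supported on $\Z/p$-homologically standard subsurfaces of genus $\leq G_k$ --- taking $G_k$ to be the larger of the two constants produced. Since the argument is uniform in $k$, it in particular reproves (non-explicitly) the cases $k = 1, 2$ treated concretely by Cooper.

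The step I expect to require the most care is the assertion that an FI-quotient of the lower-central-series tower inherits the full finiteness package that the bounded-generation theorem demands. Unlike the abelian FI-module setting, one is here manipulating towers of finite nonabelian nilpotent groups, and one must check that the FI-group formalism underpinning the proof of Theorem~\ref{maintheorem:modgentorelli} is robust enough to accommodate arbitrary characteristic quotients of $\pi/\gamma_{k+1}(\pi)$, and --- most importantly --- that forming such a quotient does not enlarge the genus of the subsurfaces on which a generating set can be supported. Everything else (characteristicity, finiteness of the quotients, the containments with the ordinary lower central series) is routine.
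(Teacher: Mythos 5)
There is a genuine gap, and it is precisely at the point where the paper has to do new work. Theorem~\ref{maintheorem:boundedgeneration} is applied not to the tower of nilpotent quotients $\pi/\gamma_{k+1}^S(\pi)$ or $\pi/\gamma_{k+1}^Z(\pi)$, but to a weak FI-group $G$ on the mapping-class side together with a central filtration $\{G(k)\}$ satisfying $G=G(1)$, and its hypothesis is that this ambient group $G=G(1)$ is \emph{boundedly generated}. For the ordinary Johnson filtration the ambient group is $\Torelli$, and bounded generation is Proposition~\ref{proposition:modgentorelli}. For the mod-$p$ filtrations the ambient group is $\Torelli_g^{1,S}(1)=\Torelli_g^{1,Z}(1)=\Mod_g^1(p)$, the level-$p$ congruence subgroup, which is strictly larger than $\Torelli_g^1$; its bounded generation is a genuinely new statement that cannot be extracted from the Torelli case or from the observation that the mod-$p$ quotients of $\pi$ are characteristic quotients of $\pi/\gamma_{k+1}(\pi)$. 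The paper supplies it as Proposition~\ref{proposition:modgenlevel}, whose proof needs, beyond Proposition~\ref{proposition:modgentorelli}, an explicit generating set for the symplectic congruence subgroup $\Sp_{2g}(\Z,p)$ by elements realizable on genus $\leq 3$ standard subsurfaces (Lemma~\ref{lemma:splevelgen}, resting on Newman--Smart and Tits). Your proposal never produces---or even mentions---bounded generation of the weak FI-group formed by the groups $\Mod_g^1(p)$, so the main hypothesis of Theorem~\ref{maintheorem:boundedgeneration} is unverified. Note also that you cannot sidestep this by filtering $\Torelli$ itself: the groups $\Torelli_g^{1,S}(k)$ are not contained in $\Torelli_g^1$ (they contain, e.g., $p$-th powers of all Dehn twists), so they do not form a central filtration of $\Torelli$, and intersecting with $\Torelli$ would prove a statement about different groups than the ones in the theorem.

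A second, smaller omission: the analogue of Proposition~\ref{prop:torellik} also requires proof. The central-filtration condition here reads $[\Mod_g^1(p),\Torelli_g^{1,S}(k)]\subset\Torelli_g^{1,S}(k+1)$ (and likewise for $Z$), together with finite generation of the graded quotients; the paper obtains this from Cooper's higher mod-$p$ Johnson homomorphisms. Your argument only establishes that $\gamma_k^S$ and $\gamma_k^Z$ are characteristic and that the kernels are well-defined normal subgroups of $\Mod_g^1$ compatible with stabilization; characteristicity of the quotient tower does not by itself give the commutator condition relative to the larger ambient group $\Mod_g^1(p)$. Your closing worry about ``not enlarging the genus of the supporting subsurfaces'' is a symptom of the real issue you have not isolated: the ambient group changes from $\Torelli$ to $\Mod_g^1(p)$, and both the bounded-generation input and the centrality input must be re-established for that larger group (which is also why the conclusion involves $\Z/p$-homologically standard, rather than homologically standard, subsurfaces: the conjugating elements now lie only in $\Mod_g^1(p)$).
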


Similarly, let $\{e_1,\ldots,e_n\}$ be the standard basis for $\HH_1(F_n;\Z/p)\cong (\Z/p)^n$. Say that a splitting
$A \ast B$ of $F_n$ is \emph{$\Z/p$-homologically standard} if for some $I \subset \{1,\ldots,n\}$,  the images of $A$ and $B$ in $\HH_1(F_n;\Z/p)$ are $\HH_1(A;\Z/p)=\langle e_i \,|\, i \in I \rangle$
and $\HH_1(B;\Z/p)=\langle e_i \,|\, i \notin I \rangle$.
We then have the following.

\begin{maintheorem}[Generators for mod-$p$ Johnson filtrations of $\Aut(F_n)$]
\label{maintheorem:autgenlevel}
Fix a prime $p$.  For all $k \geq 1$, there exists some $N_k \geq 0$ (depending
on $p$)
such that for  all $n \geq 1$, both $\IA_n^{S}(k)$ and $\IA_n^Z(k)$ are generated
by elements which are supported on a $\Z/p$-homologically standard splitting
of $F_n$ of rank $\leq N_k$.
\end{maintheorem}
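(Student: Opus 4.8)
The plan is to prove Theorem~\ref{maintheorem:autgenlevel} by running the argument for Theorem~\ref{maintheorem:autgentorelli} with the lower central series $\gamma_\bullet(F_n)$ replaced throughout by the mod-$p$ Stallings filtration $\gamma_\bullet^S(F_n)$ (respectively the mod-$p$ Zassenhaus filtration $\gamma_\bullet^Z(F_n)$), and integral homology replaced by homology with $\Z/p$-coefficients. As there, one argues by induction on $k$. For the base case $k=1$ we have $\IA_n^S(1)=\IA_n^Z(1)=\Aut(F_n,p)$, and the exact sequence $1\to\IA_n\to\Aut(F_n,p)\to\GL_n(\Z,p\Z)\to 1$ exhibits $\Aut(F_n,p)$ as generated by Magnus's generators $c_{ij},m_{ijk}$ of $\IA_n$ --- which are supported on $\Z/p$-homologically standard splittings of rank $\le 3$ --- together with lifts to $\Aut(F_n)$ of a generating set of $\GL_n(\Z,p\Z)$; one may choose such a set to consist of integral matrices each supported on at most two coordinates (this is classical, and in any case falls out of Cooper's analysis \cite{CooperThesis}), and these lift to automorphisms supported on $\Z/p$-homologically standard splittings of rank $\le 2$.

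For the inductive step, assume $\IA_n^S(k)$ is generated, for all $n$, by elements supported on $\Z/p$-homologically standard splittings of rank $\le d_k$. Set $L_{k+1}(n)=\gamma_{k+1}^S(F_n)/\gamma_{k+2}^S(F_n)$; since the Stallings filtration is central with $\Z/p$-vector-space graded pieces, the mod-$p$ Johnson homomorphism
\[
\tau_k\colon \IA_n^S(k)\longrightarrow \Hom_{\Z/p}\!\bigl(\HH_1(F_n;\Z/p),\,L_{k+1}(n)\bigr),\qquad \varphi\longmapsto\bigl(x\mapsto \varphi(x)x^{-1}\bigr),
\]
is a well-defined homomorphism with kernel exactly $\IA_n^S(k+1)$. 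The structural point is that $L_{k+1}(n)$ is a subquotient of $\HH_1(F_n;\Z/p)^{\otimes(k+1)}$ --- for the Zassenhaus filtration, a finite sum of tensor powers $\HH_1(F_n;\Z/p)^{\otimes i}$ with $i\le k+1$, using the description of the graded pieces of $\gamma_\bullet^Z$ in \cite{CooperThesis} --- so that as a functor of $n$ it is a finitely generated FI-module over $\Z/p$; hence so is the image of $\tau_k$, that is, the abelian group $\IA_n^S(k)/\IA_n^S(k+1)$, and since FI-modules over $\Z/p$ are Noetherian it is finitely presented as an FI-module. One then feeds the inductive hypothesis (finite generation of $\IA_n^S(k)$ as an FI-group) together with this finitely presented abelian quotient into the central-stability machinery used for Theorems~\ref{maintheorem:autgentorelli} and~\ref{maintheorem:modgenlevel} --- whose geometric input is the high connectivity of the complex of partial bases of $F_n$, which holds verbatim with $\Z/p$-coefficients --- via the usual secondary induction on $n$, and concludes that $\IA_n^S(k+1)$ is finitely generated as an FI-group, i.e.\ is generated for all $n$ by elements supported on $\Z/p$-homologically standard splittings of rank at most some $d_{k+1}$ depending only on $k$ and $p$. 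The Zassenhaus case is identical.

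The step I expect to be the main obstacle is exactly this last passage, from finite generation of $\IA_n^S(k)$ to finite generation of its subgroup $\IA_n^S(k+1)$. Naively $\IA_n^S(k+1)$ is the normal closure in $\IA_n^S(k)$ of boundedly many bounded-rank lifts of relators presenting the abelian quotient, but forming this normal closure requires conjugating by arbitrarily long words in the generators, which destroys control over the rank of the supporting splitting. Getting around this is precisely the role of the FI-module/central-stability formalism: one organizes the relevant relative homology --- the $\HH_1$ of the complex of partial bases of $F_n$ with coefficients twisted by the action of $\IA_n^S(k)$ --- into a finitely generated FI-module, for which generation in bounded degree follows automatically from finite generation, and then checks that the hypotheses of the formalism, in particular the connectivity estimates for the complex of partial bases, are insensitive to replacing $\Z$ by $\Z/p$ and $\gamma_\bullet$ by $\gamma_\bullet^S$ or $\gamma_\bullet^Z$. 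Because $\Z/p$ is Noetherian and the graded pieces of both mod-$p$ filtrations are built out of tensor powers of $\HH_1(F_n;\Z/p)$, none of these inputs degrade, so the proof of Theorem~\ref{maintheorem:autgentorelli} transfers with only cosmetic changes.
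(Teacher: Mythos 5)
Your outline is the paper's: (i) replace the classical Johnson homomorphisms in Proposition~\ref{prop:IAk} by Cooper's higher mod-$p$ Johnson homomorphisms, so that the groups $\IA_n^{S}(k)$ (resp.\ $\IA_n^{Z}(k)$) assemble into a finite-rank central filtration of the FI-group whose value on $[n]$ is $\Aut(F_n,p)$; (ii) show that $\Aut(F_n,p)$ is generated by the subgroups $\Aut(F_n,p,I)$ with $\abs{I}\leq 3$, using the exact sequence $1\to\IA_n\to\Aut(F_n,p)\to\GL_n(\Z,p)\to 1$, Magnus's generators, and bounded-support lifts of congruence generators; (iii) apply Theorem~\ref{maintheorem:boundedgeneration} and conjugate by elements of $\Aut(F_n,p)$, which preserve $\Z/p$-homologically standard splittings. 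You identified exactly these ingredients, so in outline the proposal matches the paper (Propositions~\ref{proposition:autgenlevel} and the mod-$p$ analogue of Proposition~\ref{prop:IAk}, fed into Theorem~\ref{maintheorem:boundedgeneration}).

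Two cautions, the first of which matters. Your account of step (iii) misdescribes the engine: Theorem~\ref{maintheorem:boundedgeneration} is proved from the Noetherian property of finite-rank FI-modules (Proposition~\ref{proposition:noetheriancentral}, from \cite{ChurchEllenbergFarbNagpal}) together with elementary commutator identities; there is no ``secondary induction on $n$,'' no twisted $\HH_1$ of the complex of partial bases, and no connectivity input anywhere in it. The complex of partial bases enters this circle of ideas only in proofs of Magnus's theorem (Day--Putman), so the hypothesis you propose to ``check over $\Z/p$'' does not exist; if you instead meant to re-derive central stability of $\IA_n^{S}(k)/\IA_n^{S}(k+1)$ by a connectivity argument in the style of \cite{PutmanRepStabilityCongruence}, that is a genuinely different and unverified route which your sketch does not carry out. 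Read simply as ``the hypotheses of Theorem~\ref{maintheorem:boundedgeneration} hold, now apply it,'' your step (iii) is fine and is what the paper does. Second, the base case is not ``classical'' in the way you suggest, and it certainly does not fall out of Cooper's work on mod-$p$ Johnson filtrations: a bounded-support \emph{generating} set for $\GL_n(\Z,p)$ is a real input. The paper uses Sury--Venkataramana's generators $\mathcal{E}^n_{ij}(p)$ and $\mathcal{B}^n_i(p)$ for $\SL_n(\Z,p)$ (normal generation in the style of Bass--Milnor--Serre would not suffice), together with the extra rank-one generator $\widetilde{\mathcal{N}}_1$ needed when $p=2$ because $\GL_n(\Z,2)\neq\SL_n(\Z,2)$. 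With these two points repaired, your argument is the paper's proof.
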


\para{Central stability}
Though our theorems concern topology and infinite group theory, the main tools
used in their proofs concern the representation theory of the symmetric group.  In particular,
we use the notion of \emph{central stability} for representations of the symmetric group, which
was introduced by the second author in \cite{PutmanRepStabilityCongruence} to study the homology
groups of congruence subgroups of $\GL_n(\Z)$.  Roughly speaking, this allows us to give an inductive
description of the images of the \emph{higher Johnson homomorphisms}, which are an important
sequence of abelian quotients of the terms of the Johnson filtrations.  The key advance that makes
this possible is the recent theorem of the first author with Ellenberg, Farb, and Nagpal~\cite{ChurchEllenbergFarbNagpal}, which establishes a Noetherian property for FI-modules over $\Z$.
This theorem allows one to prove that certain sequences of representations are centrally stable
almost for free (in particular, with no detailed understanding of their structure, which seems
quite hard to achieve for the images of the higher Johnson homomorphisms).

\para{FI-groups}
To formulate the technical framework for our arguments, we introduce  FI-groups and weak FI-groups. An 
FI-group $G$ consists of a group $G_I$ for each finite subset $I\subset \N$, together with homomorphisms $G_I\to G_J$ for each injection $I\into J$ satisfying some natural compatibility conditions (see Definition \ref{def:FIgroup} below).  A weak FI-group consists of similar data, except that for some of these homomorphisms, we require only that they be compatible up to conjugacy. The main technical result of the paper is the following theorem. The terms involved have not yet been defined; see \S\ref{section:fidefs} below for their definitions.
\begin{maintheorem}[Bounded generation for central filtrations]
\label{maintheorem:boundedgeneration}
Let $G$ be a weak FI-group with a central filtration $\{G(k)\}_{k=1}^\infty$ of finite rank. If $G$ is boundedly generated, then $G(k)\normal G$ is boundedly normally generated for every $k\geq 1$.
\end{maintheorem}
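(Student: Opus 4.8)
The plan is to induct on $k$. The base case $k=1$ is immediate: since $G(1)=G$ and $G$ is boundedly generated, any bounded-degree generating set of $G$ normally generates it, so $G(1)\normal G$ is boundedly normally generated. For the inductive step, assume $G(k)\normal G$ is boundedly normally generated; I must deduce the same for $G(k+1)$. The object to focus on is $\overline{G}(k)\coloneq G(k)/[G,G(k)]$. Because the filtration is central, $[G,G(k)]\subseteq G(k+1)\subseteq G(k)$, so $\overline{G}(k)$ is a group; because $[G(k),G(k)]\subseteq[G,G(k)]$ it is abelian; and because conjugation by $G$ is trivial on it by construction, it is naturally an FI-module over $\Z$ (the ambiguity in the transition maps of a weak FI-group is precisely an ambiguity up to conjugacy, which becomes irrelevant once we kill $[G,G(k)]$). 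There is a surjection of FI-modules $\overline{G}(k)\onto G(k)/G(k+1)$ whose kernel is $\overline{G}(k+1)\coloneq G(k+1)/[G,G(k)]$.

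The crux is to show that $\overline{G}(k+1)$ is a finitely generated FI-module. First, $\overline{G}(k)$ is finitely generated: by the inductive hypothesis, $G(k)$ is the normal closure in $G$ of finitely many bounded-degree elements $s_1,\dots,s_m$, and since conjugation by $G$ acts trivially modulo $[G,G(k)]$, the images $\overline{s_1},\dots,\overline{s_m}$ generate $\overline{G}(k)$ as an FI-module. Second, $G(k)/G(k+1)$ is a finitely generated FI-module by the finite-rank hypothesis on the filtration. Now $\overline{G}(k+1)$ is a sub-FI-module of the finitely generated FI-module $\overline{G}(k)$, so by the Noetherian property of FI-modules over $\Z$ of Church--Ellenberg--Farb--Nagpal~\cite{ChurchEllenbergFarbNagpal}, it too is finitely generated: there are finitely many elements, supported on sets of size at most some $D$, whose images generate $\overline{G}(k+1)$ as an FI-module. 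Lift these to elements $t_1,\dots,t_p\in G(k+1)$, each supported on a set of size at most $D$.

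It remains to produce bounded-degree normal generators of $G(k+1)$ in $G$, and for this I first treat $[G,G(k)]$. A commutator-calculus argument (expanding $[xy,z]$ and $[x,yz]$ in terms of commutators and conjugates) shows that for any sub-FI-set $S$ one has $[G,\langle\langle S\rangle\rangle_G]=\langle\langle[G,S]\rangle\rangle_G$; applying this with $S=\{s_1,\dots,s_m\}$ and then decomposing an arbitrary element of $G$ into the bounded-degree generators furnished by bounded generation of $G$, one sees that $[G,G(k)]$ is the normal closure in $G$ of finitely many commutators $u_1,\dots,u_q$, each supported on a set of size at most $d+d_k$, where $d$ bounds the degree of the chosen generators of $G$ and $d_k$ that of $s_1,\dots,s_m$. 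Finally set $H\coloneq\langle\langle u_1,\dots,u_q,t_1,\dots,t_p\rangle\rangle_G$, a normal sub-FI-group of $G$ generated in degree $\leq\max(d+d_k,D)$. Since every $u_i$ and $t_j$ lies in $G(k+1)\normal G$, we have $H\subseteq G(k+1)$; conversely $H\supseteq\langle\langle u_1,\dots,u_q\rangle\rangle_G=[G,G(k)]$, and $H/[G,G(k)]$ is a sub-FI-module of $\overline{G}(k+1)$ containing a generating set of $\overline{G}(k+1)$, whence $H/[G,G(k)]=\overline{G}(k+1)$ and therefore $H=G(k+1)$. This closes the induction.

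I expect the real obstacle to be the appeal to the Noetherian property in the second paragraph: the whole point of passing to $\overline{G}(k)$ is to translate ``$G(k)$ boundedly normally generated'' into ``$\overline{G}(k)$ a finitely generated FI-module'', so that~\cite{ChurchEllenbergFarbNagpal} forces the degree bound $D$ for the submodule $\overline{G}(k+1)$ to be independent of the ambient finite set --- this uniformity is exactly what is at stake and is false without Noetherianity. The subtler bookkeeping points are checking that $\overline{G}(k)$ is an honest FI-module even though $G$ is only a weak FI-group, confirming that the definitions of ``boundedly generated'' and ``finite rank'' supply genuine finiteness rather than mere bounded degree (as~\cite{ChurchEllenbergFarbNagpal} requires), and carrying out the commutator calculus for $[G,G(k)]$ carefully; these are routine but not entirely free.
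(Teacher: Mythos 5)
Your overall skeleton (induct on $k$, kill conjugation by passing to a quotient that is an honest FI-module, get bounded-degree generators of $G(k+1)$ from FI-module generators plus a commutator-calculus control of $[G,G(k)]$) is close in spirit to the paper's proof, and your commutator step is essentially the paper's Claim~1: modulo rephrasing ``finitely many elements'' as ``subgroups supported on sets of bounded size'', the identity $[G,G(k)]\subset G(k+1)^{\leq A+B_k}$ does hold. But the linchpin of your argument has a genuine gap. You assert that the inductive hypothesis makes $G(k)$ the normal closure of \emph{finitely many} elements and hence that $\overline{G}(k)=G(k)/[G,G(k)]$ is a \emph{finitely generated} FI-module. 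Bounded normal generation only says each $G(k)_J$ is generated by conjugates of the subgroups $G(k)_J(I)$ with $\abs{I}\leq B_k$; the groups $G(k)_I$ themselves (and likewise the generators of $G$ supplied by bounded generation) need not be finitely generated, and the finite-rank hypothesis of the theorem controls only $G(k)_I/G(k+1)_I$, not $G(k)_I/[G_I,G(k)_I]$. Indeed the kernel of $\overline{G}(k)_I\onto (G(k)/G(k+1))_I$ is exactly $G(k+1)_I/[G_I,G(k)_I]$, the quantity you are trying to control, so finite generation of $\overline{G}(k)$ is circular. It is also simply false in general: take the constant weak FI-group $G_I=\Gamma$ with $\Gamma$ free of infinite rank and $G(k)=\Gamma$ for all $k$; then $G$ is boundedly generated in degree $0$ and the filtration is central of finite rank (all successive quotients trivial), yet $\overline{G}(k)=\Gamma^{\ab}$ is not finitely generated. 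Consequently you cannot invoke the Noetherian property of \cite{ChurchEllenbergFarbNagpal} for the submodule $\overline{G}(k+1)\subset\overline{G}(k)$, and with it goes the uniform degree bound $D$ and the lifts $t_1,\dots,t_p$ --- which is precisely the step your whole induction rests on.

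The paper gets around exactly this difficulty by never applying Noetherianity to a module of unknown rank. It works with $W(k)=G(k)/G(k+1)$, which \emph{is} of finite rank by hypothesis, proves that both $W(k)$ and the auxiliary quotient $V^N(k)=G(k)/G(k+1)^{\leq N}$ (for $N\geq A+B_k$, using Claim~1) are boundedly generated, deduces from \cite{ChurchEllenbergFarbNagpal} that $W(k)$ is \emph{centrally stable} starting at some $B_{k+1}$, and then shows via a comparison lemma (Lemma~\ref{lemma:isomorphism}) that the surjection $V^{B_{k+1}}(k)\onto W(k)$ is an isomorphism, i.e.\ $G(k+1)=G(k+1)^{\leq B_{k+1}}$. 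Note the paper explicitly warns that $V^N(k)$ is not known to have finite rank --- the same obstruction that blocks your application of Noetherianity to $\overline{G}(k+1)$. If you want to salvage your route, you must replace the ``submodule of a finitely generated module'' argument by an argument that uses only the finite rank of $G(k)/G(k+1)$, which in effect forces you back to the central-stability comparison the paper uses.
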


\para{Generating sets for Torelli}
To apply Theorem~\ref{maintheorem:boundedgeneration} to the Torelli groups $\Torelli_g^1$, we need a strengthening of a recent theorem of the second 
author \cite{PutmanSmallGenset} concerning generating
sets for $\Torelli_g^1$.  
Johnson~\cite{JohnsonFinite} proved  that the Torelli groups $\Torelli_g^1$ are finitely generated for $g\geq 3$ with a generating set whose size
is exponential in $g$.  Johnson~\cite{JohnsonAbel} also proved that the rank of the abelianization of $\Torelli_g^1$ 
is cubic in $g$, which gives a lower bound on the size of any generating set for $\Torelli_g^1$.

The second author's theorem \cite{PutmanSmallGenset} 
says that $\Torelli_g^1$ is generated by $57 \binom{g}{3} + 2g + 1$ elements for $g \geq 3$. 
What is important to us is not the size of his generating set per se, but rather the
fact that his generators are supported on fairly simple subsurfaces of $\Sigma_g^1$: each element is supported on a genus 3 subsurface with multiple boundary components. To prove Theorem~\ref{maintheorem:modgentorelli}, we need to improve this generating set slightly, so that the generators are supported on $\binom{g}{3}$ different genus 3 subsurfaces with only \emph{one} boundary component.  We refer to
Proposition~\ref{proposition:modgentorelli} below for a precise description of
our new generating set, but we point out the following corollary.  Denote
by $\Torelli_g$ the Torelli group on a closed genus $g$ surface.

\begin{maintheorem}
\label{maintheorem:smallgenset}
For all $g \geq 3$, the groups $\Torelli_g$ and $\Torelli_g^1$ are each generated by $42 \binom{g}{3}$ elements.
\end{maintheorem}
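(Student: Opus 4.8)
The plan is to deduce Theorem~\ref{maintheorem:smallgenset} from Proposition~\ref{proposition:modgentorelli}, which carries all of the substance; we would prove that proposition first. It produces, for each $3$-element subset $I \subset \{1,\ldots,g\}$ (with $g \geq 3$), a homologically standard genus-$3$ subsurface $S_I \subset \Sigma_g^1$ with a single boundary component, together with a $42$-element generating set $\Gamma_I$ of the subgroup $\Torelli(S_I) \leq \Torelli_g^1$ of mapping classes supported on $S_I$, such that $\bigcup_{\abs{I}=3} \Gamma_I$ generates $\Torelli_g^1$. (Since $S_I$ has one boundary component the inclusion $\HH_1(S_I;\Z) \into \HH_1(\Sigma_g^1;\Z)$ is split injective, so $\Torelli(S_I) \cong \Torelli_3^1$, and $42$ here is optimal: it equals $\dim_{\Z/2}\HH_1(\Torelli_3^1;\Z/2)$ by Johnson's computation of the abelianization of the Torelli group.) As there are $\binom{g}{3}$ subsets $I$, this exhibits $\Torelli_g^1$ as generated by at most $42\binom{g}{3}$ elements. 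For the closed surface, gluing a disk to $\partial\Sigma_g^1$ and extending mapping classes by the identity defines a surjection $\Mod_g^1 \onto \Mod_g$ that kills the boundary Dehn twist; since the homology action of $\Mod_g^1$ factors through $\Mod_g$, this restricts to a surjection $\Torelli_g^1 \onto \Torelli_g$, and the image of $\bigcup_I \Gamma_I$ generates $\Torelli_g$. This gives the bound $42\binom{g}{3}$ for $\Torelli_g$ as well.

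It remains to prove Proposition~\ref{proposition:modgentorelli}. The starting point is the theorem of \cite{PutmanSmallGenset} that $\Torelli_g^1$ is generated by $57\binom{g}{3}+2g+1$ elements each supported on a genus-$3$ subsurface --- but in that result those subsurfaces have several boundary components, and (since attaching a pair of pants along two of its boundary circles raises genus) one cannot simply fatten them to one-boundary genus-$3$ subsurfaces. Instead we would re-derive the generating set using the standard ``generating machinery'' for groups acting on simplicial complexes: if a group acts on a connected complex and any vertex can be carried to any adjacent vertex by a group element, then the group is generated by the stabilizers of a set of orbit representatives of vertices together with the ``edge elements,'' and simple connectivity of the quotient complex controls the relations. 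We apply this with $\Torelli_g^1$ acting on a complex $X_g$ whose vertices are the homologically standard genus-$3$ one-boundary subsurfaces, so that the vertex stabilizers are the groups $\Torelli(S_I) \cong \Torelli_3^1$, one for each $3$-subset $I$. A change-of-coordinates argument exploiting homological standardness then lets one express the edge elements in terms of mapping classes already supported on the $S_I$, so that no extra generators are needed; the auxiliary $2g+1$ generators of \cite{PutmanSmallGenset} (which come from the multi-boundary subsurfaces and from a chain of handles) disappear. Finally the constant $57$ drops to $42$ by choosing each $\Gamma_I$ to be an economical generating set of $\Torelli_3^1$ --- for instance a subfamily of Johnson's genus-$1$ bounding pair maps on $\Sigma_3^1$, trimmed using the lantern and bounding-pair relations until exactly $42$ remain that still generate.

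The hard part is proving that $X_g$ is connected and that its quotient by $\Torelli_g^1$ is simply connected. Putman's original complex allows subsurfaces with extra boundary components, which provides the room needed to perform the surgeries connecting two configurations; requiring every subsurface in play to have a single boundary component removes that slack, since a genus-$3$ one-boundary subsurface cannot be enlarged without gaining either genus or a boundary component. Showing that any two homologically standard genus-$3$ one-boundary subsurfaces are joined by a path in $X_g$, and that loops bound modulo the $\Torelli_g^1$-action, should require an induction on $g$ together with a careful repertoire of curve moves --- sliding one handle past another, trading a handle of a subsurface for a disjoint one. A second, purely combinatorial obstacle is confirming that the pruned family of genus-$1$ bounding pair maps still generates $\Torelli_3^1$ and has exactly $42$ elements; this finite computation inside the genus-$3$ Torelli group is where the explicit constant in Theorem~\ref{maintheorem:smallgenset} is pinned down.
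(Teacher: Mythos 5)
Your deduction of Theorem~\ref{maintheorem:smallgenset} from Proposition~\ref{proposition:modgentorelli} is exactly the paper's: $\binom{g}{3}$ subgroups $\Torelli_g^1(S_I)\cong\Torelli_3^1$, each generated by $42$ elements, and the surjection $\Torelli_g^1\onto\Torelli_g$ for the closed case. (One small economy you miss: the $42$ for $\Torelli_3^1$ is not something you need to re-derive by ``trimming'' bounding pair maps with lantern relations --- that is the content of Johnson's finite-generation paper \cite{JohnsonFinite}, which the paper simply cites. Your observation that $42$ is sharp via $\HH_1(\Torelli_3^1;\Z/2)$ is correct but not needed.)

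The genuine gap is in your proof of Proposition~\ref{proposition:modgentorelli} itself. You propose to build a new complex $X_g$ of homologically standard one-boundary genus-$3$ subsurfaces and to prove it is connected with simply connected quotient; you then explicitly defer this as ``the hard part,'' with only a vague appeal to ``an induction on $g$ together with a careful repertoire of curve moves.'' That is precisely the step that carries the proposition, and as written it is not a proof. Moreover, it is unnecessary: the paper never constructs a new complex. Instead it takes Putman's theorem \cite{PutmanSmallGenset} as a black box in the form that $\Torelli_g^1$ is generated by the subgroups $\Torelli_g^1(\YY_I)$ for $\abs{I}=3$, where $\YY_I$ is the genus-$3$ subsurface with $g-2$ boundary components obtained by deleting the handles $X_i$ for $i\notin I$. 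The whole content then reduces to showing $\Torelli_g^1(\YY_I)\subset\Gamma$, which is done with the Birman exact sequence and the Fadell--Neuwirth fibrations: the kernel of $\Torelli_g^1(\YY_I)\onto\Torelli_g^1(S_I)$ is generated by the boundary twists $T_{\partial X_i}$ (supported on $S_{\{i\}}$) and handle-slide maps $T_\gamma T_{\gamma'}^{-1}$ (supported on $S_{\{i,j\}}$), all of which already lie in $\Gamma$. So the passage from multi-boundary to one-boundary subsurfaces costs nothing beyond identifying these explicit kernel generators --- no new connectivity or simple-connectivity statement is needed. If you want to salvage your route, you would have to actually establish the connectivity properties of $X_g$, which is a substantial piece of work comparable to the main argument of \cite{PutmanSmallGenset}; the exact-sequence reduction is both shorter and complete.
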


\para{Outline}
In \S\ref{section:fi}, we introduce FI-groups, weak FI-groups, and their central filtrations; the main
result of this section is Theorem~\ref{maintheorem:boundedgeneration}.  In \S\ref{section:aut}
we show how to apply this to the automorphism group of a free group and prove Theorem~\ref{maintheorem:autgentorelli}. In \S\ref{section:mod} we show how to apply this to the mapping class group and
prove Theorem~\ref{maintheorem:modgentorelli}; to do this, we first prove Theorem~\ref{maintheorem:smallgenset}.  Next, in \S\ref{section:level} we discuss how to modify
our proof of Theorems~\ref{maintheorem:autgentorelli} and \ref{maintheorem:modgentorelli} to prove
Theorems~\ref{maintheorem:modgenlevel} and \ref{maintheorem:autgenlevel}.  Finally, in \S\ref{section:bounds}
we prove Theorems~\ref{maintheorem:modnongentorelli} and \ref{maintheorem:autnongentorelli}.

\para{Acknowledgements}
We wish to thank the referee for their careful reading of our paper. We are grateful to Shigeyuki Morita for informing us of a mistake in an earlier version, and to Yiwei She for pointing out an elegant fix.

\section{FI-groups and their central filtrations}
\label{section:fi}

This section contains all our general results on FI-groups and weak FI-groups.  The key result is Theorem~\ref{maintheorem:boundedgeneration}, which
we will later apply to prove Theorems~\ref{maintheorem:modgentorelli}, \ref{maintheorem:autgentorelli},
\ref{maintheorem:modgenlevel}, and \ref{maintheorem:autgenlevel}.

We begin in \S\ref{section:fidefs}
with general definitions, including all the definitions that are used in the statement of Theorem~\ref{maintheorem:boundedgeneration}.  We then discuss some technical
results in \S\ref{section:technical}. In \S\ref{section:centralstability}, we describe the related theories of central stability and FI-modules as they will be used in this paper.  Finally, we prove
Theorem~\ref{maintheorem:boundedgeneration} in \S\ref{section:generalproof}.

\subsection{FI-groups and weak FI-groups}
\label{section:fidefs}

In this section, we introduce FI-groups, weak FI-groups, and central filtrations of weak FI-groups, leading up to the statement of the key Theorem~\ref{maintheorem:boundedgeneration}.

\para{FI-groups}
Let $\N$ be the set of natural numbers, and let $\FI$ be the category whose 
objects are finite subsets
of $\N$ and whose morphisms are injections. Let $\Grp$ be the category of
groups and homomorphisms.

\begin{definition}
\label{def:FIgroup}
An \emph{FI-group} is a functor from $\FI$ to $\Grp$.  In other words, an FI-group $G$ consists
of the following data.
\begin{compactenum}[(i)]
\item For each finite set $I \subset \N$, a group $G_I$.
\item For each injection $f\colon I \into J$ between finite sets $I,J \subset \N$, a homomorphism
$G_f\colon G_I \rightarrow G_J$.  These homomorphisms must satisfy the following compatibility conditions.
\begin{compactenum}[a.]
\item For all finite sets $I \subset \N$, we have $G_{\id_I} = \id$, where $\id_I\colon I \rightarrow I$ is
the identity.
\item For all finite sets $I,J,K \subset \N$ and all injections $f\colon I \into J$ and
$g\colon J \into K$, we have $G_{g \circ f} = G_g \circ G_f$.
\end{compactenum}
\end{compactenum}
A morphism $\Psi\colon G\to H$ of FI-groups is a natural transformation of functors. In other words, $\Psi$ consists of a homomorphism $\Psi_I\colon G_I\to H_I$ for each finite set $I\subset \N$, so that for every injection $f\colon I\into J$ between finite sets $I,J\subset \N$ the following diagram commutes:
\[\xymatrix{
G_I \ar[r]^{G_f} \ar[d]_{\Psi_I} & G_J \ar[d]^{\Psi_J} \\
H_I \ar[r]^{H_f}                 & H_J}\]
The morphism $\Psi$ is an isomorphism (resp.\ an injection, resp.\ a surjection) 
if $\Psi_I$ is an isomorphism (resp.\ an injection, resp.\ a surjection) for all finite sets $I \subset \N$.
\end{definition}

\begin{remark}
FI-groups (and the related notion of FI-modules; see \S\ref{section:technical} below) were originally 
defined by the first author  with Ellenberg and Farb in
\cite{ChurchEllenbergFarbFI}. The definitions in that paper were slightly different
from ours, in that in \cite{ChurchEllenbergFarbFI} the category $\FI$ had \emph{all} finite sets
as its objects; however, this larger category is equivalent to our category.  
\end{remark}

\begin{remark}
Let $[n]=\{1,\ldots,n\}$. For each bijection $\sigma\colon [n] \rightarrow [n]$, 
we have a homomorphism $G_\sigma\colon G_{[n]}\to G_{[n]}$.  Together 
these give an action of the symmetric group $\Sym_n$ on $G_{[n]}$.
\end{remark}

\para{Weak FI-groups}
In \S\ref{section:aut}, we will see that the automorphism groups of free groups can be naturally
viewed as an FI-group.
Unfortunately, the mapping class groups of surfaces do not form an FI-group.  However, they do satisfy a weaker form of functoriality that is sufficient for our purposes.  

If $A$ and $B$ are groups,
then $B$ acts by conjugation on the set of homomorphisms from $A$ to $B$.  A \emph{homomorphism-modulo-conjugacy} is an equivalence class of homomorphisms under this action.  Homomorphisms-modulo-conjugacy can be composed (by composing representatives), so there
is a category $\CGrp$ of groups and homomorphisms-modulo-conjugacy.  Given a pair of finite sets $I\subset J\subset \N$, let $i_I^J\colon I \into J$ denote the inclusion.  

\begin{definition}
\label{def:weakFIgroup}
A \emph{weak FI-group} $G$ consists of the following data.  
\begin{compactenum}[(i)]
\item For each finite set $I \subset \N$, a group $G_I$.
\item For each injection $f\colon I \into J$ between finite sets $I,J \subset \N$,
a homomorphism-modulo-conjugacy $G_f\colon G_I \rightarrow G_J$.  These homomorphisms-modulo-conjugacy
must satisfy the following compatibility conditions.
\begin{compactenum}[a.]
\item For all finite sets $I \subset \N$, we have $G_{\id_I} = \id$.
\item For all finite sets $I,J,K \subset \N$ and all injections $f\colon I \into J$ and
$g\colon J \into K$, we have $G_{g \circ f}$ equal to $G_g \circ G_f$ in $\CGrp$.
\end{compactenum}
\item For each pair of finite sets $I\subset J \subset \N$, a homomorphism
$G_I^J \colon G_I \rightarrow G_J$. These homomorphisms must satisfy the following compatibility conditions.
\begin{compactenum}[a.]
\item For all pairs of finite sets $I\subset J\subset \N$, the homomorphism-modulo-conjugacy $G_{i_I^J}$ is represented by the homomorphism $G_I^J$.
\item For all triples of finite sets $I \subset J \subset K\subset \N$, we
have $G_J^K \circ G_I^J = G_I^K$.
\end{compactenum}
\end{compactenum}
\end{definition}
In \S\ref{section:mod}, we will see that the mapping class groups of surfaces with one boundary component
can be naturally viewed as a weak FI-group.

\begin{remark}
\label{remark:forget}
Every FI-group $G$ can be canonically considered as a weak FI-group, by considering the homomorphisms $G_f\colon G_I\to G_J$ only as homomorphisms-modulo-conjugacy (and setting $G_I^J\coloneq G_{i_I^J}$). The conditions of Definition~\ref{def:FIgroup} imply that all the conditions of Definition~\ref{def:weakFIgroup} are satisfied. 
Throughout the paper, wherever necessary we consider FI-groups as weak FI-groups via this ``forgetful'' process. As a result, many of our technical results and definitions will be stated for weak FI-groups, but they apply equally well to FI-groups.
\end{remark}

\begin{remark}
The conditions on the $G_f$ in Definition~\ref{def:weakFIgroup}(ii) are equivalent to the assertion that they piece together to give a functor
from $\FI$ to $\CGrp$.  In \cite{ChurchEllenbergFarbFI}, such functors were called ``FI-groups up to conjugacy''.
Similarly, the conditions on the homomorphisms $G_I^J$ in Definition~\ref{def:weakFIgroup}(iii) are equivalent to the assertion
that they piece together to give a functor from the category of finite subsets of $\N$ and \emph{inclusions} to $\Grp$.
\end{remark}

\para{FI-modules}
An \emph{FI-module} is an FI-group $W$ such that $W_I$ is an abelian group for all finite sets $I \subset \N$.
We say that an FI-module $W$ has \emph{finite rank} if for all finite sets $I \subset \N$, the 
abelian group $W_I$ is finitely generated.

\begin{remark}
FI-modules were originally defined by the first author  with Ellenberg and Farb in \cite{ChurchEllenbergFarbFI},
and we refer the reader to \cite{ChurchEllenbergFarbFI} for many examples of them.  The paper
\cite{ChurchEllenbergFarbFI} considers FI-modules over an arbitrary ring $R$; in that language,
our FI-modules are FI-modules over the ring $\Z$. Observe that there would be no point in defining ``weak FI-modules'';
indeed, since homomorphisms-modulo-conjugacy  coincide with homomorphisms when the groups involved are abelian, the conditions of Definition~\ref{def:weakFIgroup} reduce to the conditions of Definition~\ref{def:FIgroup} in this case.
\end{remark}

\para{Normal weak FI-subgroups}
Let $A$ and $B$ be groups, and let $f\colon A\to B$ be some homomorphism-modulo-conjugacy. Observe that if $N\normal A$ is a normal subgroup, the subgroup $f(N)\subset B$ is well-defined, even though $f$ is not a well-defined homomorphism. 

\begin{definition}
\label{def:weaksubgroup}
Let $G$ be either an FI-group or a weak FI-group.  A \emph{normal weak FI-subgroup} $H$ of $G$, denoted
$H \normal G$, consists of a normal subgroup $H_I \normal G_I$ for each finite set $I \subset \N$ satisfying
the following property. 
\begin{compactitem}
\item For all injections $f\colon I \into J$ between finite sets $I,J \subset \N$, 
we have $G_f(H_I)\subset H_J$.  
\end{compactitem}
Given $H\normal G$ and $H' \normal G$, we write $H\subset H'$ if $H_I \subset H'_I$ for all finite sets $I \subset \N$.
\end{definition}
\begin{remark}
By the remark preceding Definition~\ref{def:weaksubgroup}, the fact that $H_I\normal G_I$ guarantees that the condition $G_f(H_I)\subset H_J$ is well-defined, even when $G$ is only a weak FI-group. This issue is the reason
we do not define non-normal weak FI-subgroups.
\end{remark}

\begin{remark}
\label{remark:warning}
If $G$ is an FI-group and $H\normal G$, then $H$ is itself an FI-group.  However,
we warn the reader that if $G$ is only a weak FI-group and $H\normal G$, then $H$ is
\emph{not} necessarily a weak FI-group. The reason is that a homomorphism-modulo-conjugacy $G_I\to G_J$ cannot be restricted to a homomorphism-modulo-conjugacy $H_I\to H_J$, since homomorphisms conjugate by an element of $G_J$ need not be conjugate by an element of its subgroup $H_J$.
\end{remark}

\para{Bounded generation} The notion of bounded generation, which we define in this subsection, captures the idea that an FI-group (or weak FI-group) is generated by elements ``supported on subsets of bounded size''.

\begin{definition}
\label{def:FIgroupsupport}
Let $G$ be a weak FI-group, and $H\normal G$. Given a pair of finite sets $I\subset J\subset \N$, we denote by $H_J(I)$ the image $H_{J}(I)\coloneq G_I^J(H_I)\subset H_J$. 
\end{definition}
One should regard $H_{J}(I)$ as the subgroup of $H_J$ which is ``supported on the subset $I$''. Given $I\subset K\subset J$, the identity $G_I^J=G_K^J\circ G_I^K$ implies that $H_J(I)\subset H_J(K)$. Taking $H=G$, we have $G_J(I)\coloneq G_I^J(G_I)\subset G_J$.

\begin{definition}
\label{def:boundedgen}
Let $G$ be a weak FI-group. Given $A\geq 0$, we say that $G$ is \emph{boundedly generated in degree $A$} if for all finite sets $J\subset \N$,
\begin{equation}
\label{eq:boundedgencondition}
G_J\text{ is generated by its subgroups }G_J(I)\text{ for those }I\subset J\text{ satisfying }\abs{I}\leq A.
\end{equation}
We say that $G$ is \emph{boundedly generated} if \eqref{eq:boundedgencondition} holds for some $A\geq 0$.
\end{definition}

\begin{lemma}
\label{lemma:boundedgen}
Let $G$ be an FI-group.  Fix $A\geq 0$, and assume that for all $n\in \N$, the condition \eqref{eq:boundedgencondition} holds
for the set $J=[n]$.  Then $G$ is boundedly generated in degree $A$.
\end{lemma}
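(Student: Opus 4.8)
The plan is to reduce the general bounded generation condition, quantified over all finite subsets $J \subset \N$, to the special case $J = [n]$ by exploiting the functoriality of $G$ together with the $\Sym_n$-symmetry noted in the remark after Definition~\ref{def:FIgroup}. The point is that any finite set $J \subset \N$ of cardinality $n$ is isomorphic in $\FI$ to $[n]$, so the groups $G_J$ and $G_{[n]}$ are (non-canonically) isomorphic, and the whole poset of subsets of $J$ with its inclusion maps is carried isomorphically onto the corresponding poset for $[n]$; one just has to check that this isomorphism carries the subgroups $G_J(I)$ to the subgroups $G_{[n]}(I')$ for the corresponding subsets.

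First I would fix a finite set $J \subset \N$ with $\abs{J} = n$ and choose a bijection $\sigma\colon [n] \to J$. Since $G$ is a functor on $\FI$, $\sigma$ induces an isomorphism $G_\sigma\colon G_{[n]} \xrightarrow{\ \sim\ } G_J$, with inverse $G_{\sigma^{-1}}$. Next I would observe that for any subset $I \subset [n]$, writing $I' = \sigma(I) \subset J$, the inclusions fit into a commuting square in $\FI$ — namely $\sigma \circ i_I^{[n]} = i_{I'}^{J} \circ (\sigma|_I)$ — so applying the functor $G$ gives $G_\sigma \circ G_I^{[n]} = G_{I'}^{J} \circ G_{\sigma|_I}$. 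Since $G_{\sigma|_I}\colon G_I \to G_{I'}$ is an isomorphism, it is in particular surjective, so taking images yields
\[
G_\sigma\bigl(G_{[n]}(I)\bigr) = G_\sigma\bigl(G_I^{[n]}(G_I)\bigr) = G_{I'}^{J}\bigl(G_{\sigma|_I}(G_I)\bigr) = G_{I'}^{J}(G_{I'}) = G_J(I').
\]
In words: the isomorphism $G_\sigma$ identifies the ``supported on $I$'' subgroup of $G_{[n]}$ with the ``supported on $\sigma(I)$'' subgroup of $G_J$.

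Now I would conclude as follows. By hypothesis, $G_{[n]}$ is generated by the subgroups $G_{[n]}(I)$ over all $I \subset [n]$ with $\abs{I} \le A$. Applying the isomorphism $G_\sigma$ and using the identity just established, $G_J = G_\sigma(G_{[n]})$ is generated by the subgroups $G_\sigma(G_{[n]}(I)) = G_J(\sigma(I))$ over all such $I$. As $I$ ranges over subsets of $[n]$ of size $\le A$, the sets $\sigma(I)$ range over exactly the subsets of $J$ of size $\le A$ (since $\sigma$ is a bijection and preserves cardinality). Hence $G_J$ is generated by its subgroups $G_J(I')$ for $I' \subset J$ with $\abs{I'} \le A$, which is precisely condition~\eqref{eq:boundedgencondition} for $J$. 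Since $J$ was arbitrary, $G$ is boundedly generated in degree $A$.

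The argument is entirely formal; there is no real obstacle, only bookkeeping. The one point requiring a moment's care is the commuting-square identity $\sigma \circ i_I^{[n]} = i_{\sigma(I)}^{J} \circ (\sigma|_I)$ in $\FI$ and the observation that functoriality then transports not just the groups but the entire family of ``support'' subgroups; once that is in hand, surjectivity of $G_{\sigma|_I}$ does the rest. (Note that this lemma is stated for honest FI-groups rather than weak FI-groups, which is essential: the conjugacy ambiguity in a weak FI-group would not obviously let one transport the subgroups $G_J(I)$ on the nose, though here we only ever use the genuine homomorphisms $G_\sigma$ and $G_I^J$, so no such difficulty arises.)
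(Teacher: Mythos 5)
Your proof is correct and is essentially identical to the paper's: both choose a bijection between $J$ and $[n]$, use functoriality to get the identity $G_\sigma \circ G_I^{[n]} = G_{\sigma(I)}^{J} \circ G_{\sigma|_I}$, and conclude that the isomorphism $G_\sigma$ carries the support subgroups $G_{[n]}(I)$ onto the subgroups $G_J(\sigma(I))$. Your closing remark about why this fails for weak FI-groups also matches the paper's Remark~\ref{remark:weakboundedgen}.
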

\begin{proof}
Given any set $J\subset \N$, let $n=\abs{J}$, and choose a bijection $f\colon [n] \rightarrow J$. 
Given any $I\subset J$, set $I'\coloneq f^{-1}(I)\subset [n]$. By Definition~\ref{def:FIgroupsupport} we have \[G_f(G_{[n]}(I))=G_f(G_{I'}^{[n]}(G_{I'}))=G_I^J(G_f(G_{I'}))=G_I^J(G_I)=G_J(I),\] where the equality $G_f\circ G_{I'}^{[n]}=G_I^J\circ G_f$ holds because $G$ is an FI-group. Therefore the condition \eqref{eq:boundedgencondition} for $J$ follows from condition \eqref{eq:boundedgencondition} for $[n]$.
\end{proof}

\begin{remark}
\label{remark:weakboundedgen}
When $G$ is a weak FI-group it is \emph{not enough} to check \eqref{eq:boundedgencondition} for $J=[n]$. The proof of Lemma~\ref{lemma:boundedgen} breaks down not just because $G_f\circ G_{I'}^{[n]}=G_I^J\circ G_f$ need not hold, but because $G_f(G_{[n]}(I))$ is not even a well-defined subgroup. The best we could conclude is that $G_J$ is \emph{normally} generated by the subgroups $G_J(I)$ with $\abs{I}\leq A$, a far weaker condition. Indeed, choosing the homomorphisms $G_I^J$ so that a given weak FI-group is boundedly generated can be quite delicate. This issue is the main reason that we must be so careful in \S\ref{section:mod} when making the Torelli group into a weak FI-group.
\end{remark}

\begin{definition}
\label{def:boundednormalgen}
Let $G$ be a weak FI-group, and let $H\normal G$ be a normal weak FI-subgroup. Given $B\geq 0$, we say that $H\normal G$ is \emph{boundedly normally generated in degree $B$} if for all finite sets $J\subset \N$,
\begin{equation}
\label{eq:boundednormalgencondition}
\begin{split}
    H_J\text{ is generated by the $G_J$-conjugates of its subgroups }H_J(I)\qquad\qquad \\
    \text{ for those }I\subset J\text{ satisfying }\abs{I}\leq B.
  \end{split}
\end{equation}
We say that $H\normal G$ is \emph{boundedly normally generated} if this holds for some $B\geq 0$.
\end{definition}

\begin{remark}
The condition \eqref{eq:boundednormalgencondition} is vacuous for $\abs{J}\leq B$, since $H_J=H_J(J)$; similarly the condition \eqref{eq:boundedgencondition} is vacuous for $\abs{J}\leq A$.
\end{remark}

\para{Central filtrations}
Let $G$ be a weak FI-group. Given $H \normal G$, we can define $[G,H]$ via the formula $[G,H]_I = [G_I,H_I]$
for finite sets $I \subset \N$; it is easy to check that $[G,H]\normal G$ and $[G,H]\subset H$.

\begin{definition}
\label{definition:centralfiltration}
Let $G$ be a weak FI-group.  A \emph{central filtration} of $G$ consists of
normal weak FI-subgroups $G(k) \normal G$ for each $k \geq 1$ satisfying
\[G=G(1)\supset G(2)\supset \cdots\supset G(k)\supset G(k+1)\supset \cdots\]
and $[G,G(k)]\subset G(k+1)$ for all $k\geq 1$.  This latter condition implies that
$G(k)_I / G(k+1)_I$ is an abelian group for all finite sets $I \subset \N$, and we say
that our central filtration is of \emph{finite rank} if the abelian group $G(k)_I/G(k+1)_I$ is finitely generated
for all $k \geq 1$ and all finite sets $I \subset \N$.
\end{definition}

We can now state our main technical theorem, which we will prove in \S\ref{section:generalproof} below.
\begin{boundedgentheorem}
Let $G$ be a weak FI-group with a central filtration $\{G(k)\}_{k=1}^\infty$ of finite rank. If $G$ is boundedly generated, then $G(k)\normal G$ is boundedly normally generated for every $k\geq 1$.
\end{boundedgentheorem}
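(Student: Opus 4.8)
The plan is to prove Theorem~\ref{maintheorem:boundedgeneration} by induction on $k$, the base case $k=1$ being precisely the hypothesis that $G=G(1)$ is boundedly generated. So assume that $G(k) \normal G$ is boundedly normally generated in some degree $B_k$, and seek a bound $B_{k+1}$ for $G(k+1) \normal G$. The key object is the quotient $W = G(k)/G(k+1)$, which by the central filtration hypothesis is an FI-module (not merely a weak FI-module: since the $W_I$ are abelian, the conjugacy ambiguity in the transition maps of $G$ disappears, so $W$ is an honest functor $\FI \to \AbGrp$), and by the finite-rank hypothesis it is an FI-module of finite rank. The strategy is then: (i) use central stability / the Noetherian property of finite-rank FI-modules over $\Z$ (from \cite{ChurchEllenbergFarbNagpal}, as set up in \S\ref{section:centralstability}) to conclude that $W$ is \emph{centrally stable}, i.e.\ finitely generated as an FI-module and with its relations also generated in bounded degree; (ii) translate ``finitely generated as an FI-module'' into the statement that there is some $D$ such that for every finite $J$, the group $W_J$ is generated by the images $W_J(I) = \Image(G_I^J)$ over subsets $I\subset J$ with $\abs{I}\le D$; (iii) combine this with the inductive hypothesis on $G(k)$ and a short extension argument to produce generators of $G(k+1)_J$ supported on subsets of bounded size.

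For step (iii), fix a finite set $J$, and suppose $x \in G(k+1)_J$. We want to write $x$ as a product of $G_J$-conjugates of elements lying in subgroups $G(k+1)_J(I)$ with $\abs{I}$ bounded. Since $G(k+1)_J \subset G(k)_J$, the inductive hypothesis lets us write $x$ (modulo nothing — it is an equality in $G(k)_J$) as a product of $G_J$-conjugates of elements $y_1,\dots,y_m$ with each $y_a$ lying in some $G(k)_J(I_a)$, $\abs{I_a}\le B_k$. Each such $y_a = G_{I_a}^J(z_a)$ with $z_a \in G(k)_{I_a}$; its image in $W_J$ lies in $W_J(I_a)$. Multiplying together, the class $[x] \in W_J$ is a sum of terms in the $W_J(I_a)$, which is automatically ``supported'' on $\bigcup I_a$ — but that union is as large as $J$, so this alone gains nothing. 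The right move is instead: because $W$ is generated in degree $\le D$, the element $[x]\in W_J$, once we know $x \in G(k+1)$ means $[x]=0$ — wait, that is the point: $x\in G(k+1)_J$ means $[x]=0$ in $W_J = G(k)_J/G(k+1)_J$. So the content is rather that the \emph{relations} among the generators $W_J(I)$ as $I$ ranges over small subsets are themselves generated in bounded degree, which is exactly what central stability (not just finite generation) provides. Concretely: $G(k+1)_J$ is the kernel of $G(k)_J \onto W_J$, and presenting $W$ by generators and relations in bounded degree shows that this kernel is generated, as a normal subgroup of $G(k)_J$, by (a) commutators $[G_J, G(k)_J]$ supported on bounded subsets — these lie in $G(k+1)$ and are supported on bounded subsets by construction — together with (b) the lifts of the finitely many defining relations of $W$, each of which is a word in elements of $G(k)_{I}$ for $\abs{I}$ bounded and hence supported on a bounded subset, that happens to land in $G(k+1)_J$. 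Carrying the inductive normal-generation bound $B_k$ through this, one gets $B_{k+1}$ depending only on $B_k$, on $D$ (the central-stability degree of $W$, which depends only on $k$), and on the degree in which $[G,G(k)]\subset G(k+1)$ is generated (again bounded).

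The main obstacle, and the step I expect to require the most care, is step (i)–(ii): extracting from the abstract FI-module-theoretic statement ``$W=G(k)/G(k+1)$ is finitely generated / centrally stable over $\Z$'' a usable \emph{presentation} of each group $G(k+1)_J$ by generators and relations supported on uniformly bounded subsets. The subtlety is threefold: first, one must verify $W$ really is a finite-rank FI-module so that the Noetherian theorem of \cite{ChurchEllenbergFarbNagpal} applies — this uses the finite-rank hypothesis on the central filtration and the fact that abelianness kills the weak/strong distinction; second, central stability controls $W$ only up to degree-$D$ generators and degree-$D'$ relations, and one must check these two bounds suffice to reconstruct $G(k+1)_J$ for \emph{all} $J$ simultaneously, using the inductive normal-generation statement for $G(k)$ to handle the part of the kernel coming from $[G,G(k)]$; third — and this is where the weak FI-group structure bites — because $G(k+1)$ need not itself be a weak FI-group (Remark~\ref{remark:warning}), one cannot induct ``internally'' on $G(k+1)$, so all the bookkeeping must be done with $G(k+1)$ sitting inside the genuine weak FI-group $G$ and with its transition maps only those inherited from the maps $G_I^J$ and $G_f$ of $G$. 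Once the presentation is in hand, the conjugation bookkeeping in step (iii) is routine.
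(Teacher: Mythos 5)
Your overall strategy matches the paper's in outline: induct on $k$, observe that $W=G(k)/G(k+1)$ is an honest finite-rank FI-module, get bounded generation of $W$ from the inductive hypothesis, and invoke the Noetherian property of \cite{ChurchEllenbergFarbNagpal} for bounded-degree control. The gap is in how your step (iii) converts that control into normal generators of $G(k+1)_J$, specifically in item (b). Bounded normal generation (Definition~\ref{def:boundednormalgen}) requires generators lying in $G(k+1)_J(I)=G_I^J\bigl(G(k+1)_I\bigr)$, i.e.\ images of elements of the smaller groups $G(k+1)_I$. What your relation-lifting produces is an element of $G(k+1)_J$ that is a word in elements of $G(k)_J(I_a)$ with the $I_a$ small, i.e.\ an element of $G(k+1)_J\cap G(k)_J(I')$ for a bounded set $I'$. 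These are not the same thing: an element supported on $I'$ as an element of $G(k)$ which happens to lie in $G(k+1)_J$ need not come from $G(k+1)_{I'}$, and this distinction is exactly the content of the theorem. The repair is to lift each defining relation of $W$ at the level of its support set $I'$ (the corresponding word in $G(k)_{I'}$ dies in $W_{I'}$, so it gives an honest element of $G(k+1)_{I'}$) and only then push forward by the genuine homomorphism $G_{I'}^J$, using $G_{I'}^J\circ G_{I_a}^{I'}=G_{I_a}^J$; the same care is needed in your item (a), where the paper's Lemma~\ref{lemma:union} is what shows that a commutator $[w,z]$ with $w\in G_J(I')$ and $z\in G(k)_J(I)$ lies in $G(k+1)_J(I\cup I')$ rather than merely in $G(k+1)_J$. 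In addition, your claim that the kernel of $G(k)_J\onto W_J$ is normally generated by (a) and (b) has a lift-ambiguity problem: two lifts of the same generator of $W$ differ by an element of some $G(k+1)_I$, which is not visibly in the normal closure of (a) and (b), so the comparison between $W_J$ and your quotient is not well defined as stated; and the reduction to an abelian computation requires first proving that $[G,G(k)]$ lands in the bounded-support part of $G(k+1)$ (the paper's Claim~1, via the Witt--Hall identities, bounded generation of $G$ in degree $A$, and the inductive hypothesis in degree $B_k$), which you assert rather than prove.

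For comparison, the paper's endgame avoids presentations and lifting altogether: after Claim~1 gives $[G,G(k)]\subset G(k+1)^{\leq N}$ for $N\geq A+B_k$, it forms the FI-module $V^N(k)=G(k)/G(k+1)^{\leq N}$, shows both $V^N(k)$ and $W(k)$ are boundedly generated in degree $B_k$, chooses $B_{k+1}$ so that $W(k)$ is centrally stable starting at $B_{k+1}$, and applies Lemma~\ref{lemma:isomorphism}: since $V^{B_{k+1}}(k)\to W(k)$ is an isomorphism in degrees $\leq B_{k+1}$ and $V^{B_{k+1}}(k)$ is boundedly generated, it is an isomorphism everywhere, i.e.\ $G(k+1)=G(k+1)^{\leq B_{k+1}}$. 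Working modulo $G(k+1)^{\leq N}$ from the outset is precisely what kills the lift-ambiguities and support issues above, and it replaces your appeal to ``relations generated in bounded degree'' (a notion the paper never needs to formulate) by the central-stabilization criterion. Your sketch is likely repairable along these lines, but as written the key step does not yet prove the statement.
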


\subsection{Technical results about FI-groups}
\label{section:technical}

This section collects a number of technical results about FI-groups that we will need in the proof of Theorem~\ref{maintheorem:boundedgeneration}.

\para{Controlling the support}
We begin with the following lemma, which allows us to control the support of certain commutators. 
\begin{lemma}
\label{lemma:union}
Let $G$ be a weak FI-group with a central filtration $\{G(k)\}_{k=1}^{\infty}$.  Fix some $k \geq 1$ and
let $I,{I'},J \subset \N$ be finite sets satisfying $I,{I'} \subset J$.  Consider $w \in G_J(I)$ and $z \in G(k)_J(I')$.
Then $[w,z] \in G(k+1)_J(I \cup I')$.
\end{lemma}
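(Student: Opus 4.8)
The plan is to reduce everything to the single subset $L\coloneq I\cup I'$ and to exploit the fact that, in a weak FI-group, the inclusion-induced maps $G_L^J$ are \emph{honest} homomorphisms (Definition~\ref{def:weakFIgroup}(iii)), not merely homomorphisms-modulo-conjugacy. This is what makes it legitimate to push a commutator forward along $G_L^J$.

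First I would unwind the definitions of $G_J(I)$ and $G(k)_J(I')$ from Definition~\ref{def:FIgroupsupport}: write $w=G_I^J(w_0)$ for some $w_0\in G_I$ and $z=G_{I'}^J(z_0)$ for some $z_0\in G(k)_{I'}$. Since $I\subset L\subset J$ and $I'\subset L\subset J$, the compatibility condition $G_I^J=G_L^J\circ G_I^L$ (Definition~\ref{def:weakFIgroup}(iii)b), and likewise for $I'$, lets me rewrite $w=G_L^J(w_1)$ and $z=G_L^J(z_1)$, where $w_1\coloneq G_I^L(w_0)\in G_L(I)\subset G_L$ and $z_1\coloneq G_{I'}^L(z_0)\in G(k)_L(I')\subset G(k)_L$. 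The last membership uses that $G(k)$ is a normal weak FI-subgroup, so that $G_{I'}^L(G(k)_{I'})\subset G(k)_L$.

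Next, since $G_L^J\colon G_L\to G_J$ is a group homomorphism, it carries commutators to commutators: $[w,z]=[G_L^J(w_1),G_L^J(z_1)]=G_L^J([w_1,z_1])$. The element $[w_1,z_1]$ lies in $[G_L,G(k)_L]=[G,G(k)]_L$, which by the definition of a central filtration (Definition~\ref{definition:centralfiltration}) is contained in $G(k+1)_L$. Therefore $[w,z]\in G_L^J(G(k+1)_L)=G(k+1)_J(L)=G(k+1)_J(I\cup I')$, which is exactly the claim.

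The only delicate point — and really the reason the statement holds for weak FI-groups at all — is that the argument must be routed through the honest homomorphisms $G_L^J$ rather than through the homomorphisms-modulo-conjugacy $G_f$, since a commutator cannot be pushed forward along a mere homomorphism-modulo-conjugacy in a well-defined way. Once the bookkeeping is organized around $L=I\cup I'$ this is automatic, so I do not anticipate any real obstacle; the proof is short.
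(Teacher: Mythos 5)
Your proof is correct and is essentially the argument the paper has in mind: the paper compresses it to one sentence (citing the inclusions $G_J(I)\subset G_J(I\cup I')$ and $G(k)_J(I')\subset G(k)_J(I\cup I')$ plus the definition of a central filtration), and your write-up just makes explicit the underlying mechanism of routing everything through the honest homomorphism $G_{I\cup I'}^J$. No gaps.
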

Lemma~\ref{lemma:union} follows immediately from the inclusions $G_J(I)\subset G_J(I\cup I')$ and $G(k)_J(I')\subset G(k)_J(I\cup I')$ together with the definition of a central filtration.

\para{Subgroups normally generated on sets of a fixed size}
Let $G$ be a weak FI-group with $H \normal G$, and fix $N \geq 0$.  For
each finite set $J \subset \N$, define $H^{\leq N}_J$ to be the subgroup generated by the $G_J$-conjugates
of the subgroups $H_J(I)$ for those $I \subset J$ satisfying $\abs{I} \leq N$.  Since $H_J \normal G_J$ is a normal subgroup, we have $H^{\leq N}_J \subset H_J$.

\begin{lemma}
\label{lemma:HleqN}
Let $G$ be a weak FI-group, let $H \normal G$ be a normal weak FI-subgroup, and fix $N \geq 0$.  Then
$H^{\leq N}\normal G$ is a normal weak FI-subgroup of $G$.
\end{lemma}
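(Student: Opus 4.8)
The plan is to verify the two defining properties of a normal weak FI-subgroup from Definition~\ref{def:weaksubgroup}: that $H^{\leq N}_J$ is a normal subgroup of $G_J$ for every finite $J\subset\N$, and that $G_f(H^{\leq N}_I)\subset H^{\leq N}_J$ for every injection $f\colon I\into J$. Normality is immediate from the construction: $H^{\leq N}_J$ is by definition generated by a $G_J$-invariant set of subgroups (the collection of $G_J$-conjugates of the $H_J(I')$ with $\abs{I'}\leq N$ is visibly closed under $G_J$-conjugation), so the subgroup it generates is normal in $G_J$. Note also that since each $H_J(I')\subset H_J$ and $H_J\normal G_J$, all these conjugates lie in $H_J$, so $H^{\leq N}_J\subset H_J$ as already observed in the text.

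The substantive point is functoriality. Since $H^{\leq N}_J$ is generated by elements of the form $g\, h\, g^{-1}$ with $g\in G_J$, $h\in H_J(I')$, $I'\subset J$, $\abs{I'}\leq N$, it suffices to show that $G_f$ carries each such generator of $H^{\leq N}_I$ into $H^{\leq N}_J$ — here I must be careful that $G_f$ is only a homomorphism-modulo-conjugacy, but that is harmless precisely because $H^{\leq N}$ is built to be conjugation-stable: $G_f(\,g\, h\, g^{-1}\,)$ is a well-defined element of $G_J$ up to $G_J$-conjugacy, and $H^{\leq N}_J$ contains an element together with all its $G_J$-conjugates, so membership $G_f(ghg^{-1})\in H^{\leq N}_J$ is a well-posed assertion. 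Now $G_f(g h g^{-1}) = G_f(g)\, G_f(h)\, G_f(g)^{-1}$ (choosing representatives), and $G_f(g)\in G_J$, so it is enough to show $G_f(h)\in H^{\leq N}_J$ for $h\in H_J$... more precisely for $h\in H_I(I')$ with $\abs{I'}\leq N$.

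So fix $I'\subset I$ with $\abs{I'}\leq N$ and $h\in H_I(I') = G^I_{I'}(H_{I'})$. The key computation is to chase $h$ through $G_f$ using the inclusion homomorphisms. Let $f(I')\subset J$ be the image; since $\abs{f(I')}=\abs{I'}\leq N$, the subgroup $H_J\big(f(I')\big) = G^J_{f(I')}(H_{f(I')})$ is one of the subgroups whose $G_J$-conjugates generate $H^{\leq N}_J$, so it suffices to show $G_f(h)$ lies in $H_J(f(I'))$ (not just in some conjugate of it). Writing $f' \colon I'\into f(I')$ for the restriction of $f$, functoriality in $\CGrp$ gives $G_f\circ G^I_{I'} = G_{f\circ i^I_{I'}} = G_{i^J_{f(I')}\circ f'} = G^J_{f(I')}\circ G_{f'}$ as homomorphisms-modulo-conjugacy. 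Applying this to $H_{I'}$ and using that $G_{f'}(H_{I'})\subset H_{f(I')}$ (this is the defining property of $H\normal G$ applied to the injection $f'$, well-defined since $H_{I'}\normal G_{I'}$), we get $G_f(H_I(I')) = G^J_{f(I')}(G_{f'}(H_{I'})) \subset G^J_{f(I')}(H_{f(I')}) = H_J(f(I'))\subset H^{\leq N}_J$, as desired. I expect the main obstacle to be purely bookkeeping: stating each step so that it is manifestly independent of the choice of conjugacy representative for $G_f$, which is why the reduction to the conjugation-stable generating set must be made before any representative is chosen.
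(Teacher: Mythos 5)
Your proposal is correct and follows essentially the same route as the paper: reduce to the generators $H_I(I')$ with $\abs{I'}\leq N$, use the compatibility $G_f\circ G^I_{I'}=G^J_{f(I')}\circ G_{f'}$ in $\CGrp$ to identify the image with (a $G_J$-conjugate of) $H_J(f(I'))$, and conclude by normality of $H^{\leq N}_J$. The only nitpick is that your displayed identity $G_f(H_I(I'))=G^J_{f(I')}(G_{f'}(H_{I'}))$ should read ``is $G_J$-conjugate to,'' as in the paper's equation \eqref{eq:HleqN}, but this is harmless since, as you note, $H^{\leq N}_J$ is conjugation-stable.
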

Comparing the definition of $H^{\leq N}$ with \eqref{eq:boundednormalgencondition}, we see that by definition
\begin{equation}
\label{eq:boundednormalgenequiv}
H\normal G\text{ is boundedly normally generated in degree $N$}\qquad\iff\qquad H=H^{\leq N}.
\end{equation}
\begin{proof}[Proof of Lemma~\ref{lemma:HleqN}]
We must prove that for any injection $f\colon J \into K$ between finite sets $J,K \subset \N$, we have $G_f(H^{\leq N}_J) \subset H^{\leq N}_K$. Choose a homomorphism representing the homomorphism-modulo-conjugacy $G_f$, which by abuse of notation we also denote $G_f$. Since $H^{\leq N}_K$ is a normal subgroup of $G_K$, it is enough to show that
$G_f(H_J(I)) \subset H^{\leq N}_K$ for all $I \subset J$ with $\abs{I} \leq N$.  

Set $I'\coloneq f(I)\subset K$ and $f'\coloneq f|_I\colon I\to I'$, and choose a representative homomorphism $G_{f'}\colon G_I\to G_{I'}$. Since $f'\colon I\to I'$ is invertible, $G_{f'}$ must be an isomorphism, and restricts to an isomorphism $H_I\xrightarrow{\cong}H_{I'}$.

Definition~\ref{def:weakFIgroup}(ii) implies that $G_f\circ G_I^J$ is $G_K$-conjugate to $G_{I'}^K\circ G_{f'}$. Therefore
\begin{equation}
\label{eq:HleqN}
G_f(H_J(I))=G_f(G_I^J(H_I))\text{\ \ is $G_K$-conjugate to\ \ }G_{I'}^K(G_{f'}(H_I))=G_{I'}^K(H_{I'})=H_K(I').
\end{equation}
Since $\abs{I'}=\abs{I}\leq N$, certainly $H_K(I')$ is contained in $H_K^{\leq N}$ (being among its normal generators). Since $H_K^{\leq N}$ is normal in $G_K$, any $G_K$-conjugate of this subgroup is also contained in $H_K^{\leq N}$. We conclude that $G_f(H_J(I))\subset H_K^{\leq N}$, as desired.
\end{proof}

\para{The  graded quotients of a central filtration}
In this paper, the key examples of FI-modules are the  graded quotients of a central filtration
of a weak FI-group.  The following lemma asserts that these do indeed form FI-modules.

\begin{lemma}
\label{lemma:quotientficentral}
Let $G$ be a weak FI-group and let $\{G(k)\}_{k=1}^{\infty}$ be a central filtration 
of $G$.  Fix some $k \geq 1$.  For each finite set $I \subset \N$, define 
$Q(k)_I \coloneq G(k)_I/G(k+1)_I$.  Then the weak FI-group structure on $G$ induces an FI-module structure
on $Q(k)$.
\end{lemma}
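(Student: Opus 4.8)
The plan is to show that, although $G(k)$ itself need not be a weak FI-group (cf.\ Remark~\ref{remark:warning}), the centrality hypothesis forces the ambiguity in the maps $G_f$ to disappear after passing to the quotient $Q(k)$. Fix an injection $f\colon I\into J$ between finite subsets of $\N$ and choose a homomorphism representing the homomorphism-modulo-conjugacy $G_f$, which I will also denote $G_f\colon G_I\to G_J$. Since $G(k)\normal G$ and $G(k+1)\normal G$, we have $G_f(G(k)_I)\subset G(k)_J$ and $G_f(G(k+1)_I)\subset G(k+1)_J$ by Definition~\ref{def:weaksubgroup} (and these inclusions are independent of the chosen representative, as observed in the remark preceding that definition). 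Hence $G_f$ restricts to a homomorphism $G(k)_I\to G(k)_J$ carrying $G(k+1)_I$ into $G(k+1)_J$, and therefore induces a homomorphism $Q(k)_f\colon Q(k)_I\to Q(k)_J$.

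The crux of the argument is to verify that $Q(k)_f$ is independent of the chosen representative of $G_f$. Any two representatives differ by conjugation by some $c\in G_J$, so it suffices to show that for every $x\in G(k)_J$ we have $cxc^{-1}\equiv x\pmod{G(k+1)_J}$. But $cxc^{-1}x^{-1}=[c,x]\in[G_J,G(k)_J]=[G,G(k)]_J\subset G(k+1)_J$ by the definition of a central filtration, which is exactly what is needed. Thus $Q(k)_f$ is well-defined; this step is the only real subtlety, and it is precisely the centrality condition $[G,G(k)]\subset G(k+1)$ that makes it work.

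It remains to check the functoriality axioms of Definition~\ref{def:FIgroup}. We have $Q(k)_{\id_I}=\id$ immediately, since $G_{\id_I}=\id$. For injections $f\colon I\into J$ and $g\colon J\into K$, a chosen representative of $G_f$ composed with a chosen representative of $G_g$ is a homomorphism $G_I\to G_K$ representing $G_g\circ G_f$, which by Definition~\ref{def:weakFIgroup}(ii) equals $G_{g\circ f}$ in $\CGrp$; hence it is $G_K$-conjugate to any chosen representative of $G_{g\circ f}$. Since we have just shown that conjugate representatives induce the same map on the quotients, it follows that $Q(k)_g\circ Q(k)_f=Q(k)_{g\circ f}$. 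Finally, each $Q(k)_I=G(k)_I/G(k+1)_I$ is abelian because $[G_I,G(k)_I]\subset G(k+1)_I$, so $Q(k)$ is a functor from $\FI$ to abelian groups, i.e.\ an FI-module. All the steps beyond the representative-independence in the second paragraph are formal.
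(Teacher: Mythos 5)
Your proof is correct and follows essentially the same route as the paper: the key point in both is that the centrality condition $[G,G(k)]\subset G(k+1)$ makes the conjugation ambiguity in $G_f$ vanish on the quotient, after which functoriality is formal. The only cosmetic difference is that the paper proves this as a special case of a slightly more general statement (Lemma~\ref{lemma:quotientficentralgen}, with arbitrary $K\normal G$, $H\normal G$ satisfying $[G,H]\subset K\subset H$), which it also needs later for the quotients $G(k)/G(k+1)^{\leq N}$.
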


Lemma~\ref{lemma:quotientficentral} is a special case of the following more general lemma.

\begin{lemma}
\label{lemma:quotientficentralgen}
Let $G$ be a weak FI-group and assume that $K\normal G$ and $H \normal G$ satisfy
$[G,H]\subset K \subset H$. Then there exists an FI-module $Q$ defined as follows: for each finite set $I \subset \N$
define $Q_I \coloneq  H_I/K_I$, and for each injection $f\colon I\into J$ let $Q_f\colon Q_I\to Q_J$ be the map induced by $G_f\colon H_I\to H_J$.
\end{lemma}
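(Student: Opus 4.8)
The plan is to choose, for each injection $f\colon I \into J$, a representative homomorphism $G_f\colon G_I \to G_J$ of the given homomorphism-modulo-conjugacy, to define $Q_f$ as the map induced by $G_f$ on the quotients, and then to check that $Q_f$ is independent of the chosen representative, from which functoriality will follow almost immediately. First I would fix such an $f$ and a representative $G_f$. Since $H\normal G$ and $K\normal G$ are normal weak FI-subgroups, we have $G_f(H_I)\subset H_J$ and $G_f(K_I)\subset K_J$ (these are well-defined by the remark preceding Definition~\ref{def:weaksubgroup}), so $G_f$ restricts to a homomorphism $H_I\to H_J$ carrying $K_I$ into $K_J$ and hence descends to a homomorphism $\overline{G_f}\colon Q_I\to Q_J$. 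Note also that $[H_I,H_I]\subset [G_I,H_I]\subset K_I$, so each $Q_I=H_I/K_I$ is abelian, as required for $Q$ to be an FI-module.

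The key step is to show that $\overline{G_f}$ does not depend on the choice of representative. Suppose $G_f'$ is another representative, so that $G_f'=c_g\circ G_f$ for some $g\in G_J$, where $c_g$ denotes conjugation by $g$. For any $h\in H_I$ we have
\[
G_f'(h)\cdot G_f(h)^{-1} = g\,G_f(h)\,g^{-1}\,G_f(h)^{-1} = [g,G_f(h)] \in [G_J,H_J]\subset K_J,
\]
using $G_f(h)\in H_J$ together with the hypothesis $[G,H]\subset K$. Hence $G_f'(h)\equiv G_f(h)\pmod{K_J}$ for all $h\in H_I$, so $G_f'$ and $G_f$ induce the same homomorphism $Q_I\to Q_J$. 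We may therefore set $Q_f\coloneq\overline{G_f}$ unambiguously.

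Finally I would verify functoriality. For the identity injection $\id_I$ we may take $\id_{G_I}$ as the representative (using $G_{\id_I}=\id$ from Definition~\ref{def:weakFIgroup}(ii)a), so $Q_{\id_I}=\id_{Q_I}$. For composable injections $f\colon I\into J$ and $g\colon J\into K$, Definition~\ref{def:weakFIgroup}(ii)b says $G_{g\circ f}$ equals $G_g\circ G_f$ in $\CGrp$, i.e.\ the composite $G_g\circ G_f$ of our chosen representatives is a representative of the homomorphism-modulo-conjugacy $G_{g\circ f}$. By the independence established above, $Q_{g\circ f}$ is the map induced by $G_g\circ G_f$, which is exactly $Q_g\circ Q_f$. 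Thus $Q$ is a functor from $\FI$ to $\AbGrp$, i.e.\ an FI-module, and Lemma~\ref{lemma:quotientficentral} is the special case $H=G(k)$, $K=G(k+1)$.

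The only step that is not pure bookkeeping is the independence of the representative, and even that reduces to the one-line commutator computation displayed above; the hypothesis $[G,H]\subset K$ is precisely what makes it go through, so I do not anticipate any genuine obstacle here.
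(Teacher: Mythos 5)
Your proposal is correct and follows essentially the same route as the paper: the abelianness of $Q_I$ from $[H_I,H_I]\subset K_I$, well-definedness of $Q_f$ because $[G_J,H_J]\subset K_J$ makes $G_J$-conjugation trivial on $Q_J$ (your explicit commutator computation is just a spelled-out version of this), and functoriality via Definition~\ref{def:weakFIgroup}(ii) plus independence of the representative. No gaps.
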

\begin{proof}
For all finite sets $I \subset \N$, we have $[H_I,H_I] \subset [G_I,H_I] \subset K_I$, so $Q_I$
is an abelian group. It remains to prove that the maps $Q_f$ are well-defined, and that they satisfy the conditions of Definition~\ref{def:FIgroup}(ii).

Consider an injection $f\colon I \into J$ between finite
sets $I,J \subset \N$. The key to the lemma is that, since $[G_J,H_J] \subset K_J$, the conjugation
action of $G_J$ on $H_J$ descends to the trivial action on $Q_J$. Therefore even though $G_f\colon H_I\to H_J$ is only defined up to $G_J$-conjugacy, it descends to a well-defined homomorphism $Q_f\colon Q_I\to Q_J$. Given another injection $g\colon J\into K$, Definition~\ref{def:weakFIgroup}(ii) guarantees that $G_g\circ G_f$ is $G_K$-conjugate to $G_{g\circ f}\colon H_I\to H_K$. It follows that the induced maps $Q_g\circ Q_f$ and $Q_{g\circ f}\colon Q_I\to Q_K$ coincide, so $Q$ is an FI-group.
\end{proof}

\subsection{Central stability and FI-modules}
\label{section:centralstability}
To prove Theorem~\ref{maintheorem:boundedgeneration}, we will need the notion of \emph{central stability},
which was introduced by the second author in \cite{PutmanRepStabilityCongruence}.
The definitions in \cite{PutmanRepStabilityCongruence}
were in terms of the representation theory of the symmetric group.  Here we give an equivalent
definition in the language of FI-modules.  

\para{Bounded generation}
Let $W$ be an FI-module, so all the groups $W_I$ are abelian.
In this case, for any finite set $J \subset \N$
we have a map
\begin{equation}
\label{eq:FIgen}
\bigoplus_{\substack{I \subset J,\\ \abs{I} \leq A}} W_I \longrightarrow W_J
\end{equation}
induced by the homomorphisms $W_I^J\colon W_I \rightarrow W_J$. Definition~\ref{def:boundedgen} says that $W$ is boundedly generated in degree $A$ if \eqref{eq:FIgen} is surjective for every finite set $J\subset \N$. 
(In \cite[Definition 2.14]{ChurchEllenbergFarbFI}, the term ``generated in degree $\leq A$'' was used instead.)

\para{Central stabilization}
Let $W$ be an FI-module, and consider some finite set $J \subset \N$.
We have a homomorphism
\[\psi\colon \bigoplus_{\substack{I \subset J,\\ \abs{I} = \abs{J}-1}} W_I \longrightarrow W_J.\]
If $\abs{J} > A$, then the map \eqref{eq:FIgen} factors through $\psi$,
so $\psi$ is surjective if $W$ is boundedly generated in some degree less than $\abs{J}$.
We wish to understand the kernel of $\psi$.  One source
of elements in $\ker(\psi)$ is as follows.  Consider a finite set $K \subset \N$ such that $K \subset J$
and $\abs{K} = \abs{J}-2$.  Let $I_1,I_2 \subset \N$ be the two distinct sets satisfying $K \subset I_i \subset J$
and $\abs{I_i} = \abs{J}-1$.  We then have a commutative diagram
\[\xymatrix{
                                                    & W_{I_1} \ar[rd]^{W_{I_1}^J} &     \\
W_K \ar[ru]^{W_K^{I_1}} \ar[rd]_{W_K^{I_2}} &                                 & W_J \\
                                                    & W_{I_2} \ar[ru]_{W_{I_2}^J} &     }\]
There is thus a map $W_K \rightarrow \ker(\psi)$ that takes $x \in W_K$ to 
\[\big(W_K^{I_1}(x),\,-W_K^{I_2}(x)\big)\in W_{I_1}\oplus W_{I_2}\subset \bigoplus_{\abs{I} = \abs{J}-1} W_I. \] Collecting all of these maps, we obtain
a map
\[\eta\colon \bigoplus_{\substack{K \subset J,\\ \abs{K} = \abs{J}-2}} W_K \longrightarrow \bigoplus_{\substack{I \subset J,\\ \abs{I}=\abs{J}-1}} W_I\]
whose image lies in $\ker(\psi)$.
The \emph{$J$-central stabilization} of $W$, denoted $\Stab{W}{J}$, is the cokernel of $\eta$.

There is a natural homomorphism $\Stab{W}{J} \rightarrow W_J$, which is surjective if $W$ is boundedly generated
in some degree less than $\abs{J}$. A morphism $\Psi\colon V\to W$ of FI-modules induces a map $\Stab{V}{J}\to \Stab{W}{J}$ consistent with the map $\Psi_J\colon V_J\to W_J$ and the maps $\Stab{V}{J}\to V_J$ and $\Stab{W}{J}\to W_J$.

\para{Central stability}
We say that an FI-module $W$ is \emph{centrally stable} starting at $E \geq 0$ if for all finite 
sets $J \subset \N$
with $\abs{J} > E$, the natural map $\Stab{W}{J} \rightarrow W_J$ is an isomorphism.  This implies
in particular that $W$ is boundedly generated in degree $E$.  We say that $W$ is
\emph{centrally stable} if it is centrally stable starting at some $E$.  One should
think of a centrally stable FI-module as being ``finitely presented''.
The key technical result underpinning this paper
is the following theorem of the first author with Ellenberg, Farb, and Nagpal.  It should
be viewed as a ``Noetherian'' property of FI-modules.

\begin{proposition}[{\cite[Corollary 2.11]{ChurchEllenbergFarbNagpal}}]
\label{proposition:noetheriancentral}
Let $W$ be a finite-rank FI-module.  If $W$ is boundedly generated, then $W$ is centrally stable.
\end{proposition}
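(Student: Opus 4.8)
The plan is to deduce central stability from the Noetherian property of FI-modules over $\Z$, passing through the intermediate notion of a finitely \emph{presented} FI-module. For a finite set $S\subset\N$ write $M(S)\coloneq\Z[\Hom_{\FI}(S,-)]$ for the associated representable FI-module; by the Yoneda lemma an element $x\in W_S$ of an FI-module $W$ corresponds to a morphism $M(S)\to W$, and $W$ is generated as an FI-module by elements $x_s\in W_{S_s}$ precisely when the resulting map $\bigoplus_s M(S_s)\onto W$ is surjective. \emph{Step 1 ($W$ is a finitely generated FI-module):} Suppose $W$ is boundedly generated in degree $A$, so \eqref{eq:boundedgencondition} holds with this $A$. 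Since every finite set of size $m$ is isomorphic in $\FI$ to $[m]$, this says exactly that $W$ is generated as an FI-module by the images of the finitely many abelian groups $W_{[0]},W_{[1]},\ldots,W_{[A]}$. As $W$ has finite rank, each $W_{[m]}$ is a finitely generated abelian group; choosing finite generating sets and collecting the corresponding Yoneda maps yields a surjection $P\coloneq\bigoplus_{s=1}^r M([m_s])\onto W$ with all $m_s\leq A$.

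\emph{Step 2 ($W$ is presented in bounded degree):} Here I would invoke the theorem of Church--Ellenberg--Farb--Nagpal \cite{ChurchEllenbergFarbNagpal} that FI-modules over $\Z$ (indeed over any Noetherian ring) are Noetherian: every sub-FI-module of a finitely generated FI-module is finitely generated. Applying this to $R\coloneq\ker(P\onto W)\subset P$, the FI-module $R$ is finitely generated, so Step 1 applied to $R$ gives a surjection $Q\coloneq\bigoplus_t M([n_t])\onto R$. Composing $Q\onto R\into P$ produces a presentation
\[ Q\xrightarrow{\ \partial\ }P\longrightarrow W\longrightarrow 0 \]
in which all of the sets $[m_s]$ and $[n_t]$ have size at most $N\coloneq\max(\{m_s\}\cup\{n_t\})$.

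\emph{Step 3 (presented in degree $\leq N$ implies centrally stable starting at $N$):} By construction $\Stab{V}{J}$ is the cokernel of the map $\eta$, which is assembled from finite direct sums of the evaluation functors $V\mapsto V_I$ for $\abs{I}\in\{\abs{J}-1,\abs{J}-2\}$; since evaluation is exact, $V\mapsto\Stab{V}{J}$ is a right-exact functor carrying the natural transformation $\Stab{V}{J}\to V_J$. A direct combinatorial computation with injections $[m]\into[j]$ shows that each $M([m])$ is centrally stable starting at $m$, i.e.\ $\Stab{M([m])}{J}\to M([m])_J$ is an isomorphism whenever $\abs{J}>m$; hence the same holds for the finite direct sums $P$ and $Q$ whenever $\abs{J}>N$. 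Fix such a $J$ and apply $\Stab{-}{J}$ and evaluation-at-$J$ to the presentation of Step 2. By naturality this gives a commuting square whose two left vertical maps are isomorphisms,
\[\xymatrix{
\Stab{Q}{J} \ar[r] \ar[d]^{\cong} & \Stab{P}{J} \ar[d]^{\cong} \\
Q_J \ar[r]                        & P_J ,
}\]
and since $\Stab{W}{J}$ (resp.\ $W_J$) is the cokernel of the top (resp.\ bottom) arrow by right-exactness (resp.\ exactness of evaluation), passing to cokernels shows that $\Stab{W}{J}\to W_J$ is an isomorphism. Thus $W$ is centrally stable starting at $N$, as required.

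The hard part is Step 2: the Noetherianity of FI-modules over $\Z$ is the genuinely deep input and is precisely the cited result of \cite{ChurchEllenbergFarbNagpal}, whose own proof runs through a shift (``derivative'') functor and a Hilbert-basis-style induction. Everything else is a formal translation between finite presentation of FI-modules and the cokernel description of central stabilization; the only point in Steps 1 and 3 demanding real (if routine) care is the explicit verification that the representable FI-modules $M([m])$ are centrally stable with the correct numerical threshold, so that the final square carries isomorphisms for every $\abs{J}>N$.
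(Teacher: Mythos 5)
The paper itself offers no proof of this proposition: it is quoted verbatim from Church--Ellenberg--Farb--Nagpal, so the only ``internal'' argument is the citation, and the genuinely deep input in your write-up --- the Noetherian property of FI-modules over $\Z$ --- is exactly the result of that reference from which the quoted corollary is derived. Your reconstruction of that derivation is correct: finite rank plus bounded generation in degree $A$ gives a surjection $P=\bigoplus_s M([m_s])\onto W$ with $m_s\leq A$; Noetherianity makes the kernel finitely generated, yielding a presentation $Q\to P\to W\to 0$ in degrees $\leq N$; and since the central stabilization $\Stab{V}{J}$ is the cokernel of a natural map between exact evaluation functors, it is right exact, so the five-lemma square you draw does transfer central stability from $P$ and $Q$ to $W$. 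The one claim you assert rather than prove --- that $M([m])$ is centrally stable starting at $m$ --- is indeed true and routine: for $\abs{J}>m$ the map $\bigoplus_{\abs{I}=\abs{J}-1}M([m])_I\to M([m])_J$ is surjective because every injection $[m]\into J$ misses a point of $J$, and its kernel is hit by the degree-$(\abs{J}-2)$ terms because any two $(\abs{J}-1)$-subsets of $J$ containing the image of a fixed injection intersect in a $(\abs{J}-2)$-subset that still contains that image, so the relevant differences are literally in the image of $\eta$. With that verification spelled out, your proposal is a complete and correct proof of the quoted statement, resting on the same external theorem the paper relies on.
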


\para{The power of central stability}
If $W$ is an FI-module which is centrally stable starting at $E$, then $W$ is determined by its initial
segment of size $E$, by which we mean the groups $W_J$ for finite sets $J \subset \N$ with $\abs{J} \leq E$ 
and the maps between
these groups.  One way of using this is as follows.

\begin{lemma}
\label{lemma:isomorphism}
Let $\Psi\colon V \rightarrow W$ be a morphism between FI-modules.  Assume that $W$ is centrally stable
starting at $E \geq 0$, that $V$ is boundedly generated in degree $E$, and that for all finite sets $J \subset \N$ with $\abs{J} \leq E$, 
the map $\Psi_J\colon V_J \rightarrow W_J$ is an isomorphism.  Then $\Psi$ is an isomorphism.
\end{lemma}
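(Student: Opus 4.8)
The plan is to use central stability of $W$ to bootstrap the isomorphism from the initial segment $|J| \leq E$ to all finite sets $J$, arguing by induction on $|J|$. The base case $|J| \leq E$ is exactly the hypothesis. So fix a finite set $J$ with $|J| > E$, and suppose inductively that $\Psi_{J'}\colon V_{J'} \to W_{J'}$ is an isomorphism for all finite sets $J'$ with $|J'| < |J|$; we must show $\Psi_J$ is an isomorphism.

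First I would handle surjectivity, which is the easy direction. Since $V$ is boundedly generated in degree $E$ and $|J| > E$, the map \eqref{eq:FIgen} for $V$ is surjective, so $V_J$ is generated by the images of $V_{I}$ for $I \subsetneq J$; as observed after \eqref{eq:FIgen}, surjectivity of this map already follows from bounded generation in any degree less than $|J|$. Naturality of $\Psi$ means each such $V_I \to V_J \xrightarrow{\Psi_J} W_J$ factors as $V_I \xrightarrow{\Psi_I} W_I \to W_J$, and $\Psi_I$ is surjective (an isomorphism, by induction). Hence the image of $\Psi_J$ contains the image of every $W_I \to W_J$ with $I \subsetneq J$; since $W$ too is boundedly generated (central stability starting at $E$ implies bounded generation in degree $E$), these images generate $W_J$, so $\Psi_J$ is surjective.

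For injectivity, I would invoke central stability of $W$ directly. By hypothesis the natural map $\Stab{W}{J} \to W_J$ is an isomorphism. Consider the commuting square
\[\xymatrix{
\Stab{V}{J} \ar[r] \ar[d] & V_J \ar[d]^{\Psi_J}\\
\Stab{W}{J} \ar[r] & W_J
}\]
where the left vertical map is induced by $\Psi$ and is built entirely out of the homomorphisms $\Psi_I$ for $|I| \leq |J|-1 < |J|$; by the inductive hypothesis all of these are isomorphisms, so (the construction of $\Stab{-}{J}$ as a cokernel being functorial) the left vertical map is an isomorphism. The bottom map is an isomorphism by central stability. The top map $\Stab{V}{J} \to V_J$ is surjective since $V$ is boundedly generated in degree $E < |J|$. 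A diagram chase then forces $\Psi_J$ to be injective: given $v \in \ker \Psi_J$, lift $v$ along the surjection $\Stab{V}{J} \to V_J$, push to $\Stab{W}{J}$ where it maps to $\Psi_J(v) = 0$ in $W_J$, hence is zero since the bottom map is injective, hence the original lift is zero since the left vertical map is injective, hence $v = 0$. Combined with surjectivity, $\Psi_J$ is an isomorphism, completing the induction.

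The only point requiring care — the ``main obstacle'', though it is minor — is checking that the induced map $\Stab{V}{J} \to \Stab{W}{J}$ is an isomorphism, i.e.\ that forming the $J$-central stabilization is a functor that preserves isomorphisms of initial segments. This follows because $\Stab{-}{J}$ is defined as $\Coker(\eta)$ with $\eta$ a map between direct sums of the groups $W_I$ over $|I| \in \{|J|-1, |J|-2\}$, and a morphism $\Psi$ of FI-modules that restricts to an isomorphism on all these $W_I$ induces an isomorphism of both the source and target of $\eta$ compatibly, hence an isomorphism on cokernels. As noted in the excerpt right after the definition of central stabilization, $\Psi$ automatically induces such a map $\Stab{V}{J}\to \Stab{W}{J}$, so one only needs the (routine) observation that it is an isomorphism under the stated hypotheses.
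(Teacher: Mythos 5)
Your proposal is correct and follows essentially the same route as the paper: induction on $\abs{J}$ using the commutative square relating $\Stab{V}{J}\to V_J$ and $\Stab{W}{J}\to W_J$, with the left vertical map an isomorphism by the inductive hypothesis, the top map surjective by bounded generation of $V$, and the bottom map an isomorphism by central stability of $W$. Splitting off surjectivity as a separate (slightly redundant) step is the only difference, and it changes nothing essential.
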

\begin{proof}
We will prove that $\Psi_J\colon V_J \rightarrow W_J$ is an isomorphism for all finite sets $J \subset \N$
by induction on $\abs{J}$.  The base cases are when $\abs{J} \leq E$, where $\Psi_J$ is an isomorphism by assumption. Assume now that $\abs{J} > E$ and that $\Psi_I$ is an isomorphism for all sets $I$ with $\abs{I}<\abs{J}$. Consider the commutative diagram:
\[\xymatrix{
\Stab{V}{J} \ar@{->>}[r] \ar_{\cong}[d] &V_J \ar^{\Psi_J}[d]\\
 \Stab{W}{J}\ar[r]^(0.58){\cong} &W_J
}
\]
The first vertical map $\Stab{V}{J}\to \Stab{W}{J}$ is 
an isomorphism because $\Psi_I$ is an isomorphism whenever $\abs{I}<\abs{J}$. The first horizontal map is surjective because $V$ is boundedly generated in degree $E<\abs{J}$,
and the second horizontal map is an isomorphism because $W$ is centrally
stable starting at $E<\abs{J}$. We conclude that $\Psi_J$ is an isomorphism, as desired.
\end{proof}

\subsection{Proof of Theorem~\ref{maintheorem:boundedgeneration}}
\label{section:generalproof}

In this section, we prove Theorem~\ref{maintheorem:boundedgeneration}.

Let $G$ be a weak FI-group with a central filtration $\{G(k)\}_{k=1}^\infty$ of finite rank. Assume that $G$ is boundedly generated in degree $A$. Our goal is to prove  for each $k\geq 1$ that $G(k)\normal G$ is boundedly normally generated. Via the equivalence \eqref{eq:boundednormalgenequiv}, we must prove that for each $k\geq 1$ there exists some $B_k\geq 0$ such that $G(k)^{\leq B_k}=G(k)$.

We will prove this by induction on $k$. In the base case $k=1$ we have  $G=G(1)$, so we may take $B_1\coloneq A$.
Now assume that for some fixed $k \geq 1$, we have constructed some $B_k \geq 0$ such that $G(k)^{\leq B_k}=G(k)$. 
We will find $B_{k+1} \geq 0$ such that $G(k+1)^{\leq B_{k+1}}=G(k+1)$; this will complete the inductive step.

 Since the $G(k)$ form a central filtration of $G$, we know that 
$[G,G(k)]\subset G(k+1)$.  Our first step will be to improve this inclusion.

\BeginClaims
\begin{claims}
For all $N \geq A+B_k$, we have $[G,G(k)] \subset G(k+1)^{\leq N}$.
\end{claims}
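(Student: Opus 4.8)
The plan is to show that every element of $[G,G(k)]_J = [G_J, G(k)_J]$ lies in $G(k+1)^{\leq N}_J$ for $N \geq A + B_k$. Since $[G_J,G(k)_J]$ is generated by commutators $[w,z]$ with $w \in G_J$ and $z \in G(k)_J$, and since $G(k+1)^{\leq N}_J$ is a normal subgroup of $G_J$, it suffices to write each such $[w,z]$ as a product of conjugates of elements lying in groups of the form $G(k+1)_J(I \cup I')$ with $|I|, |I'|$ suitably bounded. First I would use the hypothesis that $G$ is boundedly generated in degree $A$ to write $w$ as a product $w = g_1 \cdots g_r$ with each $g_s \in G_J(I_s)$ for some $I_s \subset J$ with $|I_s| \leq A$. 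Similarly, I would use the inductive hypothesis $G(k)^{\leq B_k} = G(k)$ — i.e.\ \eqref{eq:boundednormalgenequiv} — to write $z$ as a product of $G_J$-conjugates of elements $z_t \in G(k)_J(I'_t)$ with $|I'_t| \leq B_k$.

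Next I would expand the commutator $[w,z]$ using the standard commutator identities $[ab,c] = \leftexp{a}{[b,c]} \cdot [a,c]$ and $[a,bc] = [a,b]\cdot \leftexp{b}{[a,c]}$ to reduce to a product of $G_J$-conjugates of ``basic'' commutators of the form $[g_s, z']$, where $z'$ is a $G_J$-conjugate of some $z_t \in G(k)_J(I'_t)$. Because conjugation by $G_J$ is an automorphism of $G_J$ preserving each normal weak FI-subgroup, and because $G(k)_J(I'_t)$ is \emph{not} itself $G_J$-invariant, I need to be a little careful here: the cleanest route is first to conjugate everything so that the commutator to be controlled has the shape $[g, h]$ with $g \in G_J(I)$, $|I| \leq A$, and $h \in G(k)_J(I')$, $|I'| \leq B_k$ — allowing an overall $G_J$-conjugation, which is harmless since the target $G(k+1)^{\leq N}_J$ is normal. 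For such a commutator, Lemma~\ref{lemma:union} gives $[g,h] \in G(k+1)_J(I \cup I')$, and $|I \cup I'| \leq |I| + |I'| \leq A + B_k \leq N$, so this commutator is one of the normal generators of $G(k+1)^{\leq N}_J$, hence lies in $G(k+1)^{\leq N}_J$. Collecting the pieces, $[w,z]$ is a product of $G_J$-conjugates of such elements, so $[w,z] \in G(k+1)^{\leq N}_J$.

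The main obstacle is bookkeeping with the conjugations: when one expands $[w,z]$ into basic commutators, the factors $g_s$ and $z_t$ get hit by various conjugating elements of $G_J$, and one must check that after absorbing these into a single outer $G_J$-conjugation (which is permissible), what remains is genuinely a commutator of an element of some $G_J(I)$ with an element of some $G(k)_J(I')$ — rather than, say, a conjugate of such an element by an element not lying in the relevant subgroup. The key point making this work is that conjugating a commutator $[g,h]$ by $x \in G_J$ yields $[\leftexp{x}{g}, \leftexp{x}{h}]$, and since $G(k+1)^{\leq N}_J \normal G_J$ we only ever need to track the commutators up to $G_J$-conjugacy, at which point Lemma~\ref{lemma:union} applies directly. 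Once the Claim is established, it feeds into the next step of the induction: applying Proposition~\ref{proposition:noetheriancentral} to the finite-rank FI-module $Q(k+1) = G(k+1)/G(k+2)$ (which is an FI-module by Lemma~\ref{lemma:quotientficentral}) to obtain central stability, and then combining the Claim with a central-stability argument (via Lemma~\ref{lemma:isomorphism} or a direct comparison of $G(k+1)^{\leq N}/G(k+2)$ with $G(k+1)/G(k+2)$) to produce the desired bound $B_{k+1}$.
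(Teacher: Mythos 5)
Your overall strategy matches the paper's: expand $[w,z]$ via the Witt--Hall identities into a product of $G_J$-conjugates of ``basic'' commutators and then apply Lemma~\ref{lemma:union}. However, the resolution you offer for the difficulty you yourself flag does not work, and the difficulty is genuine. After decomposing $w = g_1\cdots g_r$ with $g_s \in G_J(I_s)$ and writing $z$ as a product of conjugates $z_t^{h_t}$ with $z_t \in G(k)_J(I'_t)$, the basic commutators you arrive at have the form $[g_s, z_t^{h_t}]$, where the conjugating element $h_t$ is attached to the second entry only. Your key observation --- that conjugating $[g,h]$ by $x$ yields $[g^x,h^x]$ --- is true but does not address this: to bring the second entry back into $G(k)_J(I'_t)$ you must conjugate the whole commutator by $h_t^{-1}$, which moves the first entry to $g_s^{h_t^{-1}}$, and this element need not lie in any $G_J(I)$ with $\abs{I}\leq A$ (the subgroups $G_J(I)$ are not normal). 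There is in general no single outer conjugation placing both entries simultaneously into bounded-support subgroups, so Lemma~\ref{lemma:union} does not apply to what remains.

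The fix is to reverse the order of operations, which is what the paper does: do \emph{not} decompose $w$ at the outset. First decompose only $z$, reducing via $[a,bc]=[a,c]\cdot[a,b]^c$ to conjugates of commutators $[x,z^g]$ with $x\in G_J$ still arbitrary; rewrite $[x,z^g]=[x^{g^{-1}},z]^g$; and only \emph{then} apply bounded generation in degree $A$ to the element $x^{g^{-1}}$, which is a perfectly good element of $G_J$, so the hypothesis applies to it afresh. Expanding $[x^{g^{-1}},z]$ via $[ab,c]=[a,c]^b\cdot[b,c]$ now produces conjugates of commutators $[w',z]$ with $w'$ genuinely lying in some $G_J(I')$, $\abs{I'}\leq A$, and $z\in G(k)_J(I)$, $\abs{I}\leq B_k$, and Lemma~\ref{lemma:union} finishes the argument. (Separately, your closing remarks about the subsequent steps are slightly off: the Noetherian input is applied to $W(k)=G(k)/G(k+1)$ and compared with $V^N(k)=G(k)/G(k+1)^{\leq N}$, rather than to $G(k+1)/G(k+2)$ --- but that lies outside the Claim itself.)
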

\begin{proof}[Proof of claim] We will use the notation $a^b=b^{-1}ab$ and $[a,b]=a^{-1}b^{-1}ab=a^{-1}a^b$.
Fix some $N \geq A+B_k$, and consider a finite set $J \subset \N$.  
By definition, $[G,G(k)]_J$ is generated by the
set
\begin{equation}
\label{eqn:gen1}
\Set{$[x,y]$}{$x \in G_J$, $y \in G(k)_J$}.
\end{equation}
Our inductive hypothesis says that $G(k)^{\leq B_k}_J = G(k)_J$, so we can write
$y \in G(k)_J$ as a product of elements of the set
\[\Set{$z^g$}{$g \in G_J$, $z \in G(k)_J(I)$ for $I \subset J$ with $\abs{I} \leq B_k$}.\]
Repeatedly applying the Witt--Hall commutator identity $[a,bc] = [a,c] \cdot [a,b]^c$, we can therefore
express every element of \eqref{eqn:gen1} as a product of $G_J$-conjugates of elements of the set
\begin{equation}
\label{eqn:gen2}
\Set{$[x,z^g]$}{$x,g \in G_J$, $z \in G(k)_J(I)$ for $I \subset J$ with $\abs{I} \leq B_k$}.
\end{equation}
Consider some $[x,z^g]$ as in \eqref{eqn:gen2}.  We have $[x,z^g] = [x^{g^{-1}},z]^g$.  Since $G$ is boundedly generated in degree $A$, we can write $x^{g^{-1}} \in G_J$ as a product of elements in the set
\[\Set{$w$}{$w \in G_J({I'})$ for some ${I'} \subset J$ with $\abs{I'} \leq A$}.\]
Repeatedly applying the Witt--Hall commutator identity $[ab,c] = [a,c]^b \cdot [b,c]$, we can
therefore express $[x,z^g]$ as a product of $G_J$-conjugates of elements
of the set
\begin{equation}
\label{eqn:gen3}
\Set{$[w,z]$}{$w \in G_J({I'})$ for ${I'} \subset J$ with $\abs{I'} \leq A$, $z \in G(k)_J(I)$ for $I \subset J$ with $\abs{I} \leq B_k$}.
\end{equation}
In summary, $[G,G(k)]_J$ is generated by the $G_J$-conjugates of elements in \eqref{eqn:gen3}. By Lemma 
\ref{lemma:union}, every element in \eqref{eqn:gen3} lies in $G(k+1)^{\leq N}$, so this concludes the proof of Claim~1.\end{proof}

Lemma~\ref{lemma:quotientficentral} yields an FI-module $W(k) \coloneq  G(k)/G(k+1)$;  the assumption that the central filtration $\{G(k)\}_{k=1}^\infty$ is of finite rank says precisely that the FI-module $W(k)$ is of finite rank. 
Also, combining
Claim~1 with Lemma~\ref{lemma:quotientficentralgen}, we obtain for any $N \geq A+B_k$ an FI-module
$V^N(k) \coloneq G(k) / G(k+1)^{\leq N}$. We warn the reader that we do
\emph{not} yet know that $V^N(k)$ is of finite rank.

\begin{claims}
For  $N \geq A + B_k$, both $V^N(k)$ and $W(k)$ are boundedly generated in degree $B_k$.
\end{claims}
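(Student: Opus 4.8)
The plan is to exploit the fact that both $W(k)$ and $V^N(k)$ are quotients of $G(k)$ by a normal weak FI-subgroup which contains $[G,G(k)]$: for $W(k) = G(k)/G(k+1)$ this containment is built into Definition~\ref{definition:centralfiltration}, and for $V^N(k) = G(k)/G(k+1)^{\leq N}$ it is exactly the content of Claim~1, which applies because $N \geq A+B_k$. (Recall that $G(k+1)^{\leq N} \normal G$ by Lemma~\ref{lemma:HleqN}, so that $V^N(k)$ is a genuine FI-module via Lemma~\ref{lemma:quotientficentralgen}.) The crucial consequence, already used in the proof of Lemma~\ref{lemma:quotientficentralgen}, is that once we quotient $G(k)_J$ by a subgroup containing $[G_J, G(k)_J]$ the conjugation action of $G_J$ on the quotient becomes trivial.

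With that in hand, fix a finite set $J \subset \N$ and let $Q_J$ denote either $W(k)_J$ or $V^N(k)_J$. The inductive hypothesis $G(k)^{\leq B_k} = G(k)$ says, via the equivalence \eqref{eq:boundednormalgenequiv}, that $G(k)_J$ is generated by the $G_J$-conjugates of its subgroups $G(k)_J(I) = G_I^J(G(k)_I)$ for those $I \subset J$ with $\abs{I} \leq B_k$. Projecting to $Q_J$, where the $G_J$-conjugation is trivial, we conclude that $Q_J$ is generated by the images of the subgroups $G(k)_J(I)$ alone, over all $I \subset J$ with $\abs{I} \leq B_k$. Finally, by construction the image of $G(k)_J(I) = G_I^J(G(k)_I)$ in $Q_J$ is precisely the image of the FI-module structure map $Q_I \to Q_J$ (the map induced by $G_I^J$, which represents $G_{i_I^J}$). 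Hence the map $\bigoplus_{I \subset J,\ \abs{I} \leq B_k} Q_I \to Q_J$ of \eqref{eq:FIgen} is surjective, which by Definition~\ref{def:boundedgen} is exactly the assertion that $W(k)$ and $V^N(k)$ are boundedly generated in degree $B_k$.

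I do not expect a real obstacle here; this is a short bookkeeping argument, and the genuinely delicate input (Claim~1, and the triviality of conjugation on the graded quotients) has already been established. The only points that demand a little care are (i) confirming that $V^N(k)$ is actually an FI-module, so that ``boundedly generated in degree $B_k$'' is even meaningful — this is where Claim~1 and Lemma~\ref{lemma:quotientficentralgen} enter, and it is why the hypothesis $N \geq A+B_k$ is imposed; and (ii) correctly identifying the image of $G(k)_J(I) \subset G(k)_J$ under the quotient map with the image of the structure map $Q_I \to Q_J$, which follows immediately from the definition of the induced maps in Lemma~\ref{lemma:quotientficentralgen} together with the fact that $G_I^J$ represents $G_{i_I^J}$. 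This claim then feeds into the subsequent comparison of $V^N(k)$ and $W(k)$ through Proposition~\ref{proposition:noetheriancentral} and Lemma~\ref{lemma:isomorphism}.
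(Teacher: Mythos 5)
Your proposal is correct and follows essentially the same route as the paper: Claim~1 (together with the centrality condition for $W(k)$) makes the $G_J$-conjugation action trivial on the quotient, so the inductive hypothesis $G(k)=G(k)^{\leq B_k}$ immediately yields bounded generation in degree $B_k$. The only cosmetic difference is that the paper first observes $W(k)$ is a quotient of $V^N(k)$ and argues only for $V^N(k)$, whereas you run the identical argument for both quotients at once.
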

\begin{proof}[Proof of claim]
Fix  $N \geq A+B_k$.  Since $W(k)$ is a quotient of $V^N(k)$, it suffices to prove that $V^N(k)$ is boundedly generated in degree $B_k$.  Consider a finite set $J \subset \N$.
There is a surjective map $\rho\colon G(k)_J \rightarrow V^N(k)_J$.  Given 
$x \in G(k)_J$ and $y \in G_J$, Claim~1 implies that $[x,y]\in G(k+1)^{\leq N}=\ker(\rho)$, so $\rho(x) = \rho(y^{-1} x y)$.
Our inductive hypothesis says that $G(k)_J = G(k)^{\leq B_k}_J$, i.e.\ that $G(k)_J$ is generated by the $G_J$-conjugates of $G(k)_J(I)$ for $\abs{I}\leq B_k$. We conclude that
$V^N(k)_J=\rho(G(k)_J)$ is generated by
\[\Set{$\rho(G(k)_J(I))$}{$I \subset J$, $\abs{I} \leq B_k$} = \Set{$V^N(k)_J(I)$}{$I \subset J$,  $\abs{I} \leq B_k$},\]
as desired. This concludes the proof of Claim~2.
\end{proof}

The FI-module $W(k)$ is finite rank by assumption, and it is boundedly generated by Claim~2, so  
Proposition~\ref{proposition:noetheriancentral} implies that $W(k)$ is centrally stable. Choose $B_{k+1}$ (which we
may take to be at least $A+B_k$) such that
$W(k)$ is centrally stable starting at $B_{k+1}$.

We have an FI-module morphism $\pi\colon V^{B_{k+1}}(k)\onto W(k)$, since $G(k+1)^{\leq B_{k+1}}_J\subset G(k+1)_J$ for any finite set $J\subset \N$. Note that the kernel of $\pi_J\colon V^{B_{k+1}}(k)_J \rightarrow W(k)_J$ is isomorphic
to $G(k+1)_J / G(k+1)^{\leq B_{k+1}}_J$. 

If $\abs{J}\leq B_{k+1}$, by definition $G(k+1)^{\leq B_{k+1}}_J= G(k+1)_J$, so in this case $\pi_J\colon V^{B_{k+1}}(k)_J\to W(k)_J$ is an isomorphism. Moreover $V^{B_{k+1}}(k)$ is boundedly generated in degree $B_k\leq B_{k+1}$ by Claim~2. Applying Lemma~\ref{lemma:isomorphism}, we conclude that 
$\pi^{B_{k+1}}\colon V^{B_{k+1}}(k)\onto W(k)$ is an isomorphism. 

We conclude that  $\ker(\pi_J)\cong G(k+1)_J / G(k+1)^{\leq B_{k+1}}_J$ is trivial for all finite sets $J\subset \N$. In other words, we have $G(k+1)= G(k+1)^{\leq B_{k+1}}$; by \eqref{eq:boundednormalgenequiv}, this means that $G(k+1)$ is boundedly normally generated in degree $B_{k+1}$. This finishes the proof of the inductive step, and thus concludes the proof of Theorem~\ref{maintheorem:boundedgeneration}.

\begin{remark}
Theorem~\ref{maintheorem:boundedgeneration} gives no bound whatsoever on the constants $B_k$, and it is not possible to obtain any such bounds from our proof. The reason is in our use of Proposition~\ref{proposition:noetheriancentral}, which rests on the Noetherian property of FI-modules proved in \cite[Corollary 2.11]{ChurchEllenbergFarbNagpal}. This property is non-constructive, since it ultimately relies on the Noetherian property of the ring $\Z$. As a result we have no way to know how large the constant $B_{k+1}$ must be taken in the inductive step.
\end{remark}

\section{Automorphism groups of free groups}
\label{section:aut}
We begin in \S\ref{section:autfi} by
showing how to assemble all the different automorphism groups of free groups into an FI-group. In \S\ref{section:autgen} we discuss generators for $\IA_n$ and prove Theorem~\ref{maintheorem:autgentorelli}.

\subsection{Automorphism groups of free groups as an FI-group}
\label{section:autfi}

In this section, we show how the automorphism groups of free groups fit together into
an FI-group.  We also show that a similar result holds for their Torelli subgroups and that
the Johnson filtration gives a central filtration of this FI-group.

\para{Automorphism groups of free groups}
We first define an FI-group $\AF$ which collects together the automorphism groups
of free groups of different ranks as follows.
\begin{compactitem}
\item For each finite set $I \subset \N$, let $F_I$ be the free group on the
set $\Set{$x_i$}{$i \in I$}$ and define $\AF_I = \Aut(F_I)$.
\item For each injection $f\colon I \into J$ between finite sets $I,J \subset \N$,
define an injection $\psi_f\colon F_I \into F_J$ via the formula $\psi_f(x_i) = x_{f(i)}$ for $i \in I$.
We then define the homomorphism $\AF_f\colon \AF_I \rightarrow \AF_J$ via the formula
\begin{equation}
\label{eqn:AFf}
\AF_f(\varphi)(x_j) = \begin{cases}
\psi_f \circ \varphi \circ \psi_f^{-1}(x_j) & \text{if $j \in f(I)$,}\\
x_j & \text{if $j \notin f(I)$.}
\end{cases}
\end{equation}
\end{compactitem}
It is clear that these homomorphisms $\AF_f$ satisfy the compatibility condition in Definition~\ref{def:FIgroup}(ii), so this defines an FI-group $\AF$.

\para{The Johnson filtrations}
For $k \geq 1$, we define $\IA(k)\normal \AF$ as follows.
For each finite set $I \subset \N$, define $\IA(k)_I\normal \AF_I$ to
be the kernel of the action of $\AF_I=\Aut(F_I)$ on $F_I / \gamma_{k+1}(F_I)$.
The following lemma implies that $\IA(k)\normal \AF$.

\begin{lemma}
If $I,J \subset \N$ are finite sets and $f\colon I \into J$ is an injection,
then $\AF_f(\IA(k)_I) \subset \IA(k)_J$.
\end{lemma}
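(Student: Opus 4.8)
The plan is to check directly, generator by generator, that $\AF_f(\varphi)$ acts trivially on $F_J/\gamma_{k+1}(F_J)$ whenever $\varphi$ acts trivially on $F_I/\gamma_{k+1}(F_I)$. Since $\gamma_{k+1}(F_J)$ is characteristic in $F_J$, every automorphism of $F_J$ descends to an automorphism of the quotient $F_J/\gamma_{k+1}(F_J)$, and such an automorphism is the identity as soon as it fixes the images of the free generators $x_j$, which generate the quotient. So it is enough to show that $\AF_f(\varphi)(x_j) \equiv x_j \pmod{\gamma_{k+1}(F_J)}$ for every $j \in J$.

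For $j \notin f(I)$ this is immediate from \eqref{eqn:AFf}, which gives $\AF_f(\varphi)(x_j) = x_j$. For $j = f(i)$ with $i \in I$, formula \eqref{eqn:AFf} gives $\AF_f(\varphi)(x_j) = \psi_f(\varphi(x_i))$; since $\varphi \in \IA(k)_I$ we may write $\varphi(x_i) = x_i c$ with $c \in \gamma_{k+1}(F_I)$, whence $\AF_f(\varphi)(x_j) = x_{f(i)}\,\psi_f(c) = x_j\,\psi_f(c)$. The proof is thereby reduced to the single claim that $\psi_f(c) \in \gamma_{k+1}(F_J)$.

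That claim --- the only point needing any care --- follows from two elementary facts about the lower central series. First, $\psi_f\colon F_I\to F_J$ is a homomorphism, so $\psi_f(\gamma_{k+1}(F_I)) \subseteq \gamma_{k+1}(\psi_f(F_I))$. Second, for any subgroup $H \leq G$ one has $\gamma_{k+1}(H)\subseteq \gamma_{k+1}(G)$, since an iterated commutator of elements of $H$ is a fortiori an iterated commutator of elements of $G$. Applying the second fact to $H = \psi_f(F_I) \leq F_J$ and combining with the first yields $\psi_f(c) \in \gamma_{k+1}(F_J)$, as needed. I do not expect any genuine obstacle here; the one thing to keep straight is the distinction between $\gamma_{k+1}$ of the free factor $\psi_f(F_I)$ and $\gamma_{k+1}$ of the ambient group $F_J$, which is exactly what the subgroup inclusion accounts for.
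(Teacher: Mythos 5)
Your proof is correct and follows essentially the same route as the paper: the paper checks that $\AF_f(\varphi)$ acts trivially on the images of the two free factors $F_{f(I)}$ and $F_{J-f(I)}$ in $F_J/\gamma_{k+1}(F_J)$, which is exactly your case split $j\in f(I)$ versus $j\notin f(I)$, and both arguments hinge on the same key fact that $\psi_f$ carries $\gamma_{k+1}(F_I)$ into $\gamma_{k+1}(F_J)$. The only difference is that you spell out the two standard lower-central-series facts (functoriality under homomorphisms and monotonicity in subgroups) that the paper invokes without comment.
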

\begin{proof}
We have a natural splitting
$F_J = F_{f(I)} \ast F_{J-f(I)}$.  Consider $\varphi \in \IA(k)_I$.
Since the injection $\psi_f\colon F_I \rightarrow F_J$ takes $\gamma_{k+1}(F_I)$
into $\gamma_{k+1}(F_{f(I)}) \subset \gamma_{k+1}(F_J)$, the automorphism $\AF_f(\varphi)$ acts as the identity
on the image of $F_{f(I)}$ in $F_J / \gamma_{k+1}(F_J)$.  The automorphism
$\AF_f(\varphi)$ also acts as the identity on $F_{J-f(I)}$, and thus certainly acts
as the identity on its image in $F_J/\gamma_{k+1}(F_J)$. Since the images of $F_{f(I)}$ and $F_{J-f(I)}$ generate $F_J/\gamma_{k+1}(F_J)$, we conclude that $\AF_f(\varphi) \in \IA(k)_J$.
\end{proof}

Since $\AF$ is an FI-group (and not merely a weak FI-group), $\IA(k)$ is itself
an FI-group.  Note that for the set $[n]\subset \N$ we have $F_{[n]}=F_n$, so $\AF_{[n]}=\Aut(F_n)$, $\IA(1)_{[n]}=\IA_n$, and $\IA(k)_{[n]}=\IA_n(k)$.

\begin{proposition}
\label{prop:IAk}
$\{\IA(k)\}_{k=1}^\infty$ is a central filtration of
$\IA(1)$ of finite rank.
\end{proposition}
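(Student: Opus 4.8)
The plan is to verify the three conditions defining a central filtration of finite rank (Definition~\ref{definition:centralfiltration}) for the FI-group $\IA(1)$ equipped with the subgroups $\IA(k)$. First I would check that each $\IA(k)$ is indeed a normal weak FI-subgroup of $\IA(1)$: normality of $\IA(k)_I$ inside $\IA(1)_I=\IA_I$ is clear since $\IA(k)_I$ is the kernel of a homomorphism out of $\Aut(F_I)$ (and $\IA_I$ is a subgroup of this), and the compatibility with the $\AF_f$ is exactly the content of the lemma immediately preceding the proposition, so $\IA(k)\normal \IA(1)$ as FI-groups. The nesting $\IA(1)\supset \IA(2)\supset\cdots$ follows levelwise from the surjections $F_I/\gamma_{k+2}(F_I)\onto F_I/\gamma_{k+1}(F_I)$: an automorphism acting trivially on the larger quotient acts trivially on the smaller one, so $\IA(k+1)_I\subset \IA(k)_I$.

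Next I would establish the centrality condition $[\IA(1),\IA(k)]\subset \IA(k+1)$, which reduces levelwise to the classical fact that if $\varphi\in\IA_I$ and $\psi\in\IA(k)_I$ then $[\varphi,\psi]$ acts trivially on $F_I/\gamma_{k+2}(F_I)$. This is the standard computation underlying the Johnson homomorphisms: writing $\varphi(w)=w\cdot c(w)$ with $c(w)\in\gamma_2(F_I)$ modulo higher terms and $\psi(w)=w\cdot d(w)$ with $d(w)\in\gamma_{k+1}(F_I)$ modulo higher terms, one checks by direct expansion that $\varphi\psi\varphi^{-1}\psi^{-1}(w)\equiv w \pmod{\gamma_{k+2}(F_I)}$; the point is that the ``error terms'' introduced by the two automorphisms land in $\gamma_{k+2}$ because one factor already lies in $\gamma_{k+1}$ and commutators with $\gamma_2$-valued perturbations push things one step further down the lower central series. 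One can also phrase this more cleanly: since $\IA_I$ acts trivially on each graded piece $\gamma_j(F_I)/\gamma_{j+1}(F_I)$, the group $\IA_I$ acts unipotently on $F_I/\gamma_{k+2}(F_I)$ with the Johnson filtration as its associated filtration, and the commutator of a unipotent element with an element of the $k$-th filtration step lands in the $(k+1)$-st step — this is a purely formal statement about filtered nilpotent groups.

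Finally, for the finite-rank condition I must show that $\IA(k)_I/\IA(k+1)_I$ is a finitely generated abelian group for every finite $I$ and every $k\geq 1$. This follows from the $k$-th Johnson homomorphism $\tau_k$, which embeds $\IA(k)_I/\IA(k+1)_I$ into $\Hom(\HH_1(F_I;\Z),\gamma_{k+1}(F_I)/\gamma_{k+2}(F_I))$; since $\HH_1(F_I;\Z)\cong\Z^{\abs{I}}$ and $\gamma_{k+1}(F_I)/\gamma_{k+2}(F_I)$ is a finitely generated free abelian group (a free Lie algebra in finitely many generators has finitely generated graded pieces), the target is finitely generated abelian, hence so is the subgroup $\IA(k)_I/\IA(k+1)_I$. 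I expect the main obstacle, such as it is, to be organizing the centrality computation cleanly rather than any genuine difficulty: the cleanest route is to cite the standard theory of the Johnson homomorphisms (e.g.\ Andreadakis, or the surveys the paper already references) for both the centrality and the injectivity of $\tau_k$, reducing this proposition to a bookkeeping check that the FI-structure is compatible with those classical facts — which it is, essentially because $\psi_f$ respects the lower central series and the constructions are all functorial.
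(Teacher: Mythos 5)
Your proposal is correct and follows essentially the same route as the paper: reduce everything levelwise, observe that the nesting and the FI-compatibility come from the preceding lemma and the surjections $F_I/\gamma_{k+2}(F_I)\onto F_I/\gamma_{k+1}(F_I)$, and then invoke the standard theory of the higher Johnson homomorphisms --- injectivity of $\tau_k$ modulo $\IA(k+1)$ into $\Hom(\Z^n,\Lie_{k+1}(\Z^n))$ gives finite rank, and centrality follows from the classical facts about this filtration (the paper deduces it from the conjugation-invariance $\tau_k(\psi\varphi\psi^{-1})=\tau_k(\varphi)$ together with $\ker\tau_k=\IA_n(k+1)$, which is equivalent in substance to your unipotence/Andreadakis-style commutator computation). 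No gaps.
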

\begin{proof}
Fix a finite set $I \subset \N$.  Since $\gamma_2(F_I)\supset \gamma_3(F_I)\supset \gamma_4(F_I)\supset \cdots$, we clearly have
\[\IA(1) \supset \IA(2) \supset \IA(3) \supset \cdots.\]
For $k \geq 1$, we must show that
$[\IA(1)_I,\IA(k)_I] \subset \IA(k+1)_I$ and
that $\IA(k)_I/\IA(k+1)_I$ is a finite-rank abelian group.
Setting $n = \abs{I}$, the evident isomorphism $\IA(1)_I \cong \IA_n(1)$ takes
$\IA(k)_I$ to $\IA_n(k)$ for all $k \geq 1$.  Our claim is thus
equivalent to showing for all $k \geq 1$ that
$[\IA_n(1),\IA_n(k)] \subset \IA_n(k+1)$ and that
$\IA_n(k)/\IA_n(k+1)$ is a finite-rank abelian group.

For this, we will need the higher Johnson homomorphisms.
For all $k \geq 1$, let $\Lie_k(\Z^n)$ denote the $k^{\text{th}}$ graded piece of the free
Lie algebra on $\Z^n$.  The $k^{\text{th}}$ Johnson homomorphism is then a homomorphism
$\tau_k\colon \IA_n(k)\rightarrow \Hom(\Z^n,\Lie_{k+1}(\Z^n))$.  We will say more
about $\tau_k$ in \S \ref{section:bounds}; right now, we only need the following
properties (see Satoh~\cite{SatohSurvey} for a survey).
\begin{compactenum}[(I)]
\item The kernel of $\tau_k$ equals $\IA_n(k+1)$.
\item For $\psi \in \IA_n(1) = \IA_n$ and $\varphi \in \IA_n(k)$, we have
$\tau_k(\psi \varphi \psi^{-1}) = \tau_k(\varphi)$.
\end{compactenum}
Property (I) implies that $\IA_n(k) / \IA_n(k+1)$ is a subgroup of
$\Hom(\Z^n,\Lie_{k+1}(\Z^n))$, and in particular is a finite-rank abelian group.
Property (II) implies that $\tau_k([\IA_n(1),\IA_n(k)])=0$, so Property (I)
implies that $[\IA_n(1),\IA_n(k)] \subset \IA_n(k+1)$.
\end{proof}

\subsection{Generating the Torelli subgroup of \texorpdfstring{$\Aut(F_n)$}{Aut(Fn)} and its Johnson filtration}
\label{section:autgen}

As was discussed in the introduction, Magnus~\cite{MagnusGenerators} gave a
finite generating set for $\IA_n$.  We will need a corollary of his result.
Given a splitting $F=A \ast B$, recall that an automorphism $\varphi$ of $F$ is \emph{supported on the splitting} $A \ast B$ if $\varphi(A)=A$ and $\varphi|_B=\id$.  Given a pair of finite sets
$I \subset J \subset \N$, it is clear from \eqref{eqn:AFf} that $\AF_J(I)$ is exactly
the subgroup of $\AF_J = \Aut(F_J)$ consisting of automorphisms that are supported
on the splitting $F_J = F_I \ast F_{J-I}$.  Similarly, $\IA(k)_J(I) = \IA(k)_J \cap \AF_J(I)$ consists
of those automorphisms in $\IA(k)_J$ that are supported on this splitting.

In the case $J=[n]$, we write $\Aut(F_n,I)$ for $\AF_{[n]}(I)$, and write $\IA_n(I)$ for $\IA(1)_{[n]}(I)=\IA_n\cap \Aut(F_n,I)$. For example, recall the automorphisms $c_{ij},m_{ijk}\in \IA_n$ defined in the introduction:
\[c_{ij}(x_\ell) = \begin{cases}
x_j^{-1} x_\ell x_j & \text{if $\ell=i$},\\
x_\ell & \text{otherwise}.
\end{cases}\qquad\qquad m_{ijk}(x_{\ell}) = \begin{cases}
x_{\ell} [x_j,x_k] & \text{if $\ell=i$},\\
x_{\ell} & \text{otherwise}.
\end{cases}\]
Clearly $c_{ij}$
is supported on the splitting $\langle x_i,x_j\rangle \ast\langle x_\ell\,|\,\ell\neq i,j\rangle$, so $c_{ij}\in \IA_n(\{i,j\})$.
Similarly, the automorphism $m_{ijk}\in \IA_n$  is supported on the splitting
$\langle x_i,x_j,x_k \rangle \ast \langle x_\ell\,|\,\ell\neq i,j,k\rangle$, so $m_{ijk}\in \IA_n(\{i,j,k\})$.

Since Magnus proved that the elements $c_{ij}$ and $m_{ijk}$ generate $\IA_n$ for all $n$, we have the following proposition.

\begin{proposition}[Generators for $\IA_n$]
\label{proposition:autgentorelli}
For any $n\geq 0$, the group $\IA_n$ is generated by the subgroups
\[\big\{\!\IA_n(I)\,\,\big|\,\,I\subset \{1,\ldots,n\}\text{ satisfies }\ \abs{I}\leq 3\big\}.\]
\end{proposition}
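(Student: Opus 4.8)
The plan is to deduce Proposition~\ref{proposition:autgentorelli} directly from Magnus's theorem together with the identifications made just above its statement. First I would recall that Magnus~\cite{MagnusGenerators} proved that $\IA_n$ is generated by the finite set $\{c_{ij} \mid 1 \leq i \neq j \leq n\} \cup \{m_{ijk} \mid 1 \leq i,j,k \leq n \text{ distinct}\}$. The key observation, already recorded in the excerpt, is that each of these generators is ``small'': we have $c_{ij} \in \IA_n(\{i,j\})$, since $c_{ij}$ fixes every $x_\ell$ with $\ell \neq i$ and carries $\langle x_i, x_j\rangle$ to itself (it conjugates $x_i$ by $x_j$), so it is supported on the splitting $\langle x_i,x_j\rangle \ast \langle x_\ell \mid \ell \neq i,j\rangle$; and likewise $m_{ijk} \in \IA_n(\{i,j,k\})$, since $m_{ijk}$ fixes every $x_\ell$ with $\ell \neq i$ and sends $x_i$ to $x_i[x_j,x_k] \in \langle x_i,x_j,x_k\rangle$, so it is supported on the splitting $\langle x_i,x_j,x_k\rangle \ast \langle x_\ell \mid \ell \neq i,j,k\rangle$.

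Given these two facts, the proof is essentially immediate: every Magnus generator lies in some subgroup $\IA_n(I)$ with $I \subset \{1,\ldots,n\}$ and $\abs{I} \leq 3$ (namely $\abs{I} = 2$ for the $c_{ij}$ and $\abs{I} = 3$ for the $m_{ijk}$). Since these generators generate all of $\IA_n$, the collection of subgroups $\{\IA_n(I) \mid \abs{I} \leq 3\}$ generates $\IA_n$ as well. For the edge cases $n \leq 2$ one can note that the conclusion is vacuous or trivial: if $n \leq 3$ then $I = \{1,\ldots,n\}$ itself satisfies $\abs{I} \leq 3$ and $\IA_n(I) = \IA_n$, so there is nothing to prove; and Magnus's generating set is understood to be empty when $n$ is too small for any generators to be defined, in which case $\IA_n$ is trivial.

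I do not expect any genuine obstacle here; this proposition is a bookkeeping reformulation of Magnus's theorem in the language of the FI-group $\AF$ and its support subgroups $\AF_J(I)$, designed so that it can be fed into Lemma~\ref{lemma:boundedgen} to conclude that the FI-group $\IA(1)$ is boundedly generated in degree $3$. The only point requiring any care is verifying the two membership claims $c_{ij} \in \IA_n(\{i,j\})$ and $m_{ijk} \in \IA_n(\{i,j,k\})$ against the definition of $\AF_J(I)$ as the image $\AF_I^J(\AF_I)$, i.e.\ the set of automorphisms supported on the splitting $F_J = F_I \ast F_{J-I}$ in the sense that they preserve $F_I$ and restrict to the identity on $F_{J-I}$; but both are read off immediately from the defining formulas for $c_{ij}$ and $m_{ijk}$, as the excerpt already observes.
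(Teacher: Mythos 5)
Your proposal is correct and matches the paper's argument exactly: the paper likewise observes that $c_{ij}\in \IA_n(\{i,j\})$ and $m_{ijk}\in \IA_n(\{i,j,k\})$ from their defining formulas and then cites Magnus's theorem that these elements generate $\IA_n$, so the proposition follows immediately. Your remarks on the small-$n$ cases are harmless additions; no gap.
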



We are now ready to prove Theorem~\ref{maintheorem:autgentorelli}.

\begin{proof}[{Proof of Theorem~\ref{maintheorem:autgentorelli}}]
$\IA(1)$ is an FI-group, so applying Lemma~\ref{lemma:boundedgen}, Proposition~\ref{proposition:autgentorelli} implies that $\IA(1)$ is boundedly generated in degree $A=3$. Proposition~\ref{prop:IAk} states that $\{\IA(k)\}_{k=1}^\infty$ is a central filtration of bounded rank. Applying Theorem~\ref{maintheorem:boundedgeneration}, we conclude that for all $k\geq 1$, there exists $B_k\geq 0$ so that $\IA(k)\normal \IA(1)$ is boundedly normally generated in degree $B_k$. 

Let us apply this conclusion to $\IA(k)_{[n]}=\IA_n(k)$. The bounded normal generation of $\IA(k)\normal \IA(1)$ states that $\IA(k)_{[n]}$ is generated by the $\IA_n$-conjugates of its subgroups $\IA(k)_{[n]}(I)$ for those $I\subset [n]$ with $\abs{I}\leq B_k$.

We saw above that $\IA(k)_{[n]}(I)$ consists of those automorphisms in $\IA_n(k)$ which are supported on the splitting $F_n=F_I\ast F_{[n]-I}$.
The $\varphi$-conjugate of this subgroup thus consists of those automorphisms in $\IA_n(k)$ supported on the splitting 
\[F_n=\varphi(F_I)\ast\varphi(F_{[n]-I}).\] 
When $\varphi\in \IA_n$, this is a homologically standard splitting.  Therefore $\IA_n(k)$ 
is generated by elements of $\IA_n(k)$ supported on homologically standard splittings of rank $\leq B_k$, as desired.
\end{proof}

\section{Mapping class groups}
\label{section:mod}

We begin in \S\ref{section:modfi} by
showing how to assemble all the different mapping class groups for surfaces of different genus into a weak FI-group. We also show that we can do the same for their Torelli subgroups and that
the Johnson filtration gives a central filtration of this weak FI-group. In \S\ref{section:modgen} we establish a generating set for $\Torelli_g^1$, prove Theorem~\ref{maintheorem:smallgenset}, and
finally prove Theorem~\ref{maintheorem:modgentorelli}.

\subsection{Mapping class groups as a weak FI-group}
\label{section:modfi}

Ideally, we would like to construct an FI-group $\Mod$ such that $\Mod_{[g]}\cong \Mod_g^1$ 
and such that $\Mod_{\{i\}}$ is the subgroup supported on the ``$i^{\text{th}}$ handle''. 
Unfortunately, this is not possible, for the following reason.

Recall that any FI-group $G$ has an action of the symmetric group $\Sym_n$ 
on the group $G_{[n]}$.  If there did exist an FI-group $\Mod$ as above, then 
the subgroups $\Mod_{\{i\}}$ would be permuted by the action of the symmetric 
group $\Sym_g$ on $\Mod_g^1$.  Since these subgroups are disjoint, this 
action must be faithful. However, this is impossible.  Indeed, for $g \geq 2$, it follows from
work of Ivanov--McCarthy~\cite{IvanovMcCarthy} that there is a short exact sequence
\[1\to \Z/2\Z\to \Aut(\Mod_g^1)\to \Mod(\Sigma_g,\ast)\to 1,\]
where $\Mod(\Sigma_g,\ast)$ is the mapping class
group of a closed genus $g$ surface relative to a marked point.  Since every finite 
subgroup of $\Mod(\Sigma_g,\ast)$ is cyclic, every finite group of automorphisms of 
$\Mod_g^1$ is cyclic or dihedral; in particular, $\Sym_g$ cannot act faithfully 
on $\Mod_g^1$ for $g\gg 0$.
Even if we tried to work with closed surfaces, a faithful action of $\Sym_g$ 
on $\Mod_g$ would contradict Hurwitz's classical theorem that finite subgroups 
of $\Mod_g$ have size at most $84(g-1)$; see \cite[Theorem 7.4]{FarbMargalitPrimer}.
We will thus have to be content with constructing a weak FI-group $\Mod$ (this is our reason for introducing the notion of weak FI-groups).

\para{Systems of subsurfaces}
To pin down the morphisms in our weak FI-group, it will be helpful to realize the surfaces
supporting the various mapping class groups involved as subsurfaces of one infinite-genus 
surface.  Let
$S_{\N}$ be an infinite-genus surface with one end.  As in 
Figure~\ref{figure:bigsurface}a, pick closed subsurfaces $X_1,X_2,\ldots$, a
basepoint $\ast$, a ray $\alpha$, and arcs $\delta_1',\delta_2',\ldots$ with
the following properties.
\begin{compactitem}
\item The $X_i$ are disjoint and each is homeomorphic to a one-holed torus.
\item The subsurface
\[Y\coloneq S_{\N} \setminus \bigcup_{i=1}^{\infty} \Interior(X_i)\]
has genus $0$.
\item The ray $\alpha$ lies in $Y$ and starts at $\ast$.
\item The arc $\delta_i'$ lies in $Y$, starts at a point $p_i$ of $\alpha$, and ends at a point $\ast_i\in \partial X_i$. 
Also, the arcs $\delta_i'$ are all disjoint from each other and their
interiors are disjoint from $\alpha$ and the $\partial X_j$.
\item The $p_i$ appear on $\alpha$ in their natural order and have no
accumulation points, and $p_1 = \ast$.
\end{compactitem}
Define $\delta_i$ to be the arc that starts at $\ast$, travels along $\alpha$ to $p_i$,
and then travels along $\delta_i'$.
For every finite set $I \subset \N$, let $S_I$ be a closed regular neighborhood
of $\bigcup_{i \in I} (\delta_i \cup X_i)$.
Observe that $S_I$ is a genus $\abs{I}$ surface with $1$ boundary component, and contains each handle $X_i$ for
$i \in I$; see Figure~\ref{figure:bigsurface}b.

\Figure{figure:bigsurface}{BigSurface}{The top shows the one-ended infinite-genus surface $S_\N$. The long ray shown is $\alpha$.  The bottom shows
the subsurface $S_{\{2,3,5\}}$ inside $S_\N$.  We have perturbed the subsurface by an isotopy to make its structure clear.}{85}

If $I,J \subset \N$ are finite sets such that $I \subset J$, then $S_I$ is isotopic
to a subsurface of $S_J$.  For our convenience,
we will assume that the $S_I$ are chosen so that in fact $S_I \subset S_J$ whenever $I \subset J$.  One way to achieve this is as follows.  Pick a Riemannian metric on
$S_{\N}$ such that for some $\epsilon > 0$, the closed neighborhood of radius
$\epsilon$ around $\bigcup_{i=1}^{\infty} (\delta_i \cup X_i)$ is a regular neighborhood
of $\bigcup_{i=1}^{\infty} (\delta_i \cup X_i)$.  Letting
$\eta\colon [0,\infty) \rightarrow (0,\epsilon)$ be a strictly increasing function, we define $S_I$ to be the closed neighborhood of radius $\eta(\abs{I})$ around
$\bigcup_{i \in I} (\delta_i \cup X_i)$.
We let $\iota_I^J\colon S_I\into S_J$ denote the inclusion, so $\iota_J^K\circ \iota_I^J=\iota_I^K$.

\para{The weak FI-group Mod}
We now define the weak FI-group $\Mod$.  For any surface $S$,
let $\Mod(S)$ denote the mapping class group of $S$, i.e.\ the group
of isotopy classes of orientation-preserving
homeomorphisms of $S$ that restrict to the identity on $\partial S$.
For each finite set $I \subset \N$, define $\Mod_I = \Mod(S_I)$.

We next define the distinguished homomorphisms $\Mod_I^J\colon \Mod_I\to \Mod_J$.
Consider a pair of finite sets $I\subset J \subset \N$.  By our assumption above 
we have $S_I \subset S_J$, so
we can define a homomorphism $\Mod_I^J\coloneq (\iota_I^J)_{\ast}\colon \Mod_I \rightarrow \Mod_J$ by extending mapping classes
on $S_I$ to $S_J$ by the identity.
Since $\iota_J^K\circ \iota_I^J=\iota_I^K$, these homomorphisms satisfy the compatibility
conditions of Definition~\ref{def:weakFIgroup}(iii).

We now define homomorphisms-modulo-conjugacy $\Mod_f\colon \Mod_I\to \Mod_J$ 
for each injection $f\colon I\into J$ between finite sets $I,J \subset \N$. 
Choose an arbitrary orientation-preserving embedding $S_I\to S_J$.  This induces
a homomorphism $\Mod(S_I)\to \Mod(S_J)$; we define 
$\Mod_f\colon \Mod_I\to \Mod_J$ to be the induced homomorphism-modulo-conjugacy.  
This definition may not seem very canonical, and we still need to check the compatibility conditions of Definition~\ref{def:weakFIgroup}(ii). This requires the following lemma.
\begin{lemma}
\label{lemma:modconjugate}
Let $S_0$ and $S$ be surfaces with one boundary component, and let 
$\phi,\phi'\colon S_0\into S$ be two orientation-preserving embeddings of $S_0$ into $S$.  
\begin{compactenum}[(i)]
\item The induced homomorphisms $\phi_{\ast},\phi'_{\ast}\colon \Mod(S_0)\into \Mod(S)$ are 
conjugate by an element of $\Mod(S)$.
\item If $\phi$ and $\phi'$ induce the same map $\HH_1(S_0;\Z)\to \HH_1(S;\Z)$ on homology, the homomorphisms $\phi_{\ast},\phi'_{\ast}\colon \Mod(S_0)\into \Mod(S)$ are conjugate by an element of $\Torelli(S)$. 
\end{compactenum}
\end{lemma}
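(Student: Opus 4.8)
The plan is to reduce both statements to a standard change-of-coordinates principle for surfaces with one boundary component. The basic input is the ``change of coordinates principle'' (see \cite[Section 1.3]{FarbMargalitPrimer}): since $S_0$ and $S$ are surfaces with one boundary component, any orientation-preserving embedding $\phi\colon S_0 \into S$ is determined up to isotopy in $S$ by the topological type of the complementary subsurface $S \setminus \phi(S_0)$, and any two embeddings with homeomorphic complements are carried to one another by an ambient homeomorphism of $S$. For part (i), first I would observe that $S \setminus \phi(S_0)$ and $S \setminus \phi'(S_0)$ are each compact surfaces with the same number of boundary components (namely two: one from $\partial S$ and one from $\partial S_0$) and the same Euler characteristic, hence are homeomorphic. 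Therefore there is an orientation-preserving homeomorphism $h\colon S \to S$ with $h \circ \phi$ isotopic to $\phi'$. Since isotopic embeddings induce the same map on $\Mod(S)$, this gives $\phi'_\ast = (h\circ\phi)_\ast = c_{[h]} \circ \phi_\ast$, where $c_{[h]}$ denotes conjugation by the mapping class $[h] \in \Mod(S)$; this is exactly the assertion of (i).

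For part (ii), the extra hypothesis is that $\phi_\ast$ and $\phi'_\ast$ agree on $H_1(-;\Z)$; I want to upgrade the conjugating element $[h]$ from part (i) to an element of the Torelli group $\Torelli(S)$. The key point is that the action of $h$ on $H_1(S;\Z)$ can be pinned down: from $h \circ \phi \simeq \phi'$ we get $h_\ast \circ \phi_\ast = \phi'_\ast$ on $H_1$, so $h_\ast$ acts as the identity on the image of $\phi_\ast\colon H_1(S_0;\Z) \to H_1(S;\Z)$ (which equals the image of $\phi'_\ast$ by hypothesis). So $h$ is already ``Torelli on the part of homology coming from $S_0$''; the remaining task is to correct $h$ on the complementary homology without disturbing anything on $\phi(S_0)$ or altering the conjugation relation. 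Here I would use the fact that $\Torelli(S) = \Torelli(S)(1)$ acts transitively enough on the relevant configurations, or more directly: pick any mapping class $g$ supported on a subsurface of $S \setminus \phi'(S_0)$ whose action on $H_1(S;\Z)$ equals $(h_\ast)^{-1}$ restricted to the complementary summand; such a $g$ exists because $\Mod$ of the complementary subsurface surjects onto the symplectic (or $\GL$) group of the complementary homology, by the classical surjectivity of $\Mod(\Sigma) \to \Sp$. Then $g h$ lies in $\Torelli(S)$ and still satisfies $(gh)\circ\phi \simeq \phi'$ because $g$ is supported away from $\phi'(S_0)$; conjugating by $[gh]$ gives the refined statement.

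The main obstacle I anticipate is part (ii): making precise the claim that $h$ can be adjusted by an element supported on the complement so as to land in $\Torelli(S)$ while preserving the embedding relation. This requires a careful choice of the complementary correcting element—one must check that the complementary subsurface $S \setminus \phi'(S_0)$ carries enough mapping classes to realize the needed homological correction, which uses that this complementary surface has positive genus (or at least nontrivial $H_1$ feeding into the relevant summand) and that $\Mod$ of a surface surjects onto the appropriate automorphism group of its homology relative to its boundary. A clean way to organize this is to first prove (i), then note that the ``ambiguity'' in choosing $h$ is precisely a mapping class supported on the complement, and finally invoke the surjectivity of the complementary mapping class group onto the automorphisms of the complementary homology to kill the homological defect. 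The rest—homeomorphism of complements via Euler characteristic count, isotopic embeddings inducing equal maps on $\Mod$—is routine.
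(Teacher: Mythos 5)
Your proposal is correct and follows essentially the same route as the paper: part (i) via the Euler characteristic count showing the complements are homeomorphic and then building an ambient homeomorphism, and part (ii) by using the surjectivity of the mapping class group of the complementary subsurface onto the symplectic group of the complementary summand to kill the homological defect. The only (cosmetic) difference is that you correct the ambient mapping class afterwards by an element supported in $S\setminus\phi'(S_0)$, whereas the paper corrects the homeomorphism of complements before gluing it to $\phi'\circ\phi^{-1}$.
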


\noindent
We prove Lemma \ref{lemma:modconjugate} below.  Part (i) of it shows that the homomorphism-modulo-conjugacy $\Mod_f$ does not depend on the choice
of embedding $S_I \into S_J$, so our definition was canonical after all. In particular, $\Mod_I^J$  represents $\Mod_{i_I^J}$. It also guarantees
that for all finite sets $I,J,K \subset \N$ and all injections 
$f\colon I \into J$ and
$g\colon J \into K$, we have $\Mod_{g \circ f}$ equal to 
$\Mod_g \circ \Mod_f$ in $\CGrp$, so the conditions of Definition~\ref{def:weakFIgroup}(ii)
are satisfied.  
This completes the construction of the weak FI-group $\Mod$.

\begin{proof}[Proof of Lemma~\ref{lemma:modconjugate}]
By perturbing $\phi$ and $\phi'$ by an isotopy, we can assume that their images lie in $\Interior(S)$. Let $T\coloneq S\setminus \Interior(\phi(S_0))$ and $T'\coloneq S\setminus \Interior(\phi'(S_0))$.
An Euler characteristic calculation shows that there exists an orientation-preserving
homeomorphism
$\psi_T\colon T\to T'$; moreover we may assume that $\psi_T$ agrees with $\phi'\circ \phi^{-1}$ on $\partial(\phi(S_0))=\partial T$.
Let $\psi \colon S \rightarrow S$ be the orientation-preserving homeomorphism that
restricts to $\phi' \circ \phi^{-1}$ on $\phi(S_0)$ and to $\psi_T$ on
$T$. We then have $\psi\circ \phi=\phi'$, so the mapping class defined by $\psi$
conjugates $\phi_{\ast}$ to $\phi'_{\ast}$, proving (i).

Let $V$ (resp.\ $V'$) be the image in $\HH_1(S;\Z)$ of $\HH_1(T;\Z)$ (resp.\ 
$\HH_1(T';\Z)$) under the map induced by the inclusion $T\into S$ (resp.\ $T'\into S$).
We have orthogonal decompositions $\HH_1(S;\Z)=\phi_{\ast}(\HH_1(S_0;\Z))\oplus V$ and $\HH_1(S;\Z)=\phi'_{\ast}(\HH_1(S_0;\Z))\oplus V'$.  If we assume as in (ii) that $\phi$ and $\phi'$ induce the same map $\HH_1(S_0;\Z)\to \HH_1(S;\Z)$, so that $\phi_{\ast}(\HH_1(S_0;\Z))=\phi'_{\ast}(\HH_1(S_0;\Z))$, it follows that the complementary subspaces $V$ and $V'$ are equal.
Recalling that $\psi_T$ is an orientation-preserving homeomorphism from $T$ to $T'$, the map $\psi_T$ induces a symplectic automorphism $M$ of
$V$.  We can realize $M$ by a homeomorphism $\zeta$ from $T'$ to itself 
(see \cite[Chapter 6]{FarbMargalitPrimer}).  Therefore replacing $\psi_T$ by $\zeta^{-1}\circ \psi_T$ 
in the previous paragraph, we may assume that $\psi_T$ acts trivially on $V$.  The assumption on 
$\phi$ and $\phi'$ means that $\phi'\circ \phi^{-1}$ acts trivially on $\HH_1(\phi(S_0);\Z)=\phi_{\ast}(\HH_1(S_0;\Z))$. 
It follows that $\psi\in \Torelli(S)$, proving (ii).
\end{proof}

\para{The weak FI-group $\Torelli$} 
We would like to define $\Torelli$ in the same way.  However, to ensure
that $\Torelli$ forms a weak FI-group we will need to be more careful with the
homomorphisms-modulo-conjugacy $\Torelli_f$.

For each $i\in \N$, fix once and for all  a symplectic basis $\{a_i,b_i\}$ for $\HH_1(X_i;\Z)$.
For any finite set $I \subset \N$, the map 
$\HH_1(S_I;\Z) \rightarrow \HH_1(S_{\N};\Z)$
is injective, and we will identify $\HH_1(S_I;\Z)$ with its image.
Therefore
$\Set{$a_i,b_i$}{$i \in \N$}$ is a symplectic basis for $\HH_1(S_{\N};\Z)$, and 
$\Set{$a_i,b_i$}{$i \in I$}$ is a symplectic basis for $\HH_1(S_I;\Z)$.

\begin{lemma}
\label{lemma:maponhomology}
For any injection $f\colon I \into J$ between finite sets $I,J \subset \N$, 
there exists an embedding $\phi_f\colon S_I \into S_J$ which on homology induces the map
\begin{equation}
\label{eq:homologymap}
\HH_1(S_I;\Z)\to \HH_1(S_J;\Z)\qquad\quad a_i\mapsto a_{f(i)},\quad b_i\mapsto b_{f(i)}\qquad\text{for all }i\in I.
\end{equation}
\end{lemma}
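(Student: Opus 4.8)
The plan is to produce \emph{some} orientation-preserving embedding $S_I\into S_J$ with the correct qualitative behavior on homology, and then to correct it by post-composing with a suitable mapping class of $S_J$ so that the induced map on $\HH_1$ becomes exactly \eqref{eq:homologymap}.

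First I would fix an auxiliary orientation-preserving embedding $\phi_0\colon S_I\into S_J$ for which $(\phi_0)_*\colon\HH_1(S_I;\Z)\to\HH_1(S_J;\Z)$ is injective. Such a $\phi_0$ exists because $\abs{I}\leq\abs{J}$ (as $f$ is injective): writing $m=\abs{I}$ and $n=\abs{J}$, take $\phi_0$ to be the composite of the standard inclusion $\iota_{[m]}^{[n]}\colon S_{[m]}\into S_{[n]}$ with orientation-preserving homeomorphisms $S_I\cong S_{[m]}$ and $S_{[n]}\cong S_J$; on homology the middle map is the inclusion of a direct summand, so $(\phi_0)_*$ is injective. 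Because $\phi_0$ is an orientation-preserving embedding of surfaces, $(\phi_0)_*$ preserves the algebraic intersection pairing; hence its image is a direct summand of $\HH_1(S_J;\Z)$ on which $\ialg$ restricts to a unimodular form isometric to $\HH_1(S_I;\Z)$, and $\HH_1(S_J;\Z)$ is the orthogonal direct sum of this image and its symplectic complement.

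Next, the target map in \eqref{eq:homologymap} likewise embeds $\HH_1(S_I;\Z)$ onto the symplectic summand $\langle a_{f(i)},b_{f(i)}\mid i\in I\rangle$ of $\HH_1(S_J;\Z)$. Any symplectic isometry between two symplectic summands of $\HH_1(S_J;\Z)$ extends to an element of $\Sp(\HH_1(S_J;\Z))$: simply extend it arbitrarily to a symplectic isomorphism between the orthogonal complements, which are standard symplectic lattices of equal rank. Applying this to the isometry from the image of $(\phi_0)_*$ onto $\langle a_{f(i)},b_{f(i)}\mid i\in I\rangle$ determined by \eqref{eq:homologymap}, we obtain $M\in\Sp(\HH_1(S_J;\Z))$ with $M\circ(\phi_0)_*$ equal to \eqref{eq:homologymap}. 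Now use the surjectivity of the symplectic representation $\Mod(S_J)\onto\Sp(\HH_1(S_J;\Z))$ to lift $M$ to a mapping class of $S_J$, represented by an orientation-preserving homeomorphism $h\colon S_J\to S_J$; then $\phi_f\coloneq h\circ\phi_0\colon S_I\into S_J$ is an orientation-preserving embedding with $(\phi_f)_*=h_*\circ(\phi_0)_*=M\circ(\phi_0)_*$ equal to \eqref{eq:homologymap}.

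Essentially every step here is standard; the one place I would be careful is the claim that $(\phi_0)_*$ is injective and intersection-preserving, which is why I would pin $\phi_0$ down concretely through the nested surfaces $S_{[m]}\subset S_{[n]}$ rather than take an arbitrary embedding. I emphasize that this argument yields only the \emph{existence} of $\phi_f$, with no control over the embedding itself; the compatibility of the various $\phi_f$ required to make $\Torelli$ into a weak FI-group will be extracted afterward using Lemma~\ref{lemma:modconjugate}.
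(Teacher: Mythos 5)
Your proposal is correct and uses essentially the same strategy as the paper: start from an arbitrary orientation-preserving identification of $S_I$ with a subsurface of $S_J$ and then fix up the induced map on $\HH_1$ by realizing a symplectic automorphism by a mapping class (\cite[Chapter 6]{FarbMargalitPrimer}). The only difference is cosmetic: the paper sends $S_I$ homeomorphically onto $S_{f(I)}$ and corrects by a mapping class of $S_{f(I)}$ itself (so it only needs to move one symplectic basis of $\HH_1(S_{f(I)};\Z)$ to another), whereas you correct globally on $S_J$, which costs you the extra (routine) step of extending a symplectic isometry of unimodular summands to an element of $\Sp(\HH_1(S_J;\Z))$.
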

\begin{proof}
Let $\psi\colon S_I \rightarrow S_{f(I)}$ be an arbitrary orientation-preserving 
homeomorphism. Fix an arbitrary ordering on $I$. Then
$\Set{$\psi(a_i), \psi(b_i)$}{$i \in I$}$ and $\Set{$a_{f(i)},b_{f(i)}$}{$i \in I$}$
are both ordered symplectic bases for $\HH_1(S_{f(I)};\Z)$,  so there is a symplectic automorphism
$M$ of $\HH_1(S_{f(I)};\Z)$ taking the former to the latter.  We can realize $M$ 
by $\xi_f\in \Mod(S_{f(I)})$ \cite[Chapter 6]{FarbMargalitPrimer},  
and $\phi_f \coloneq i_{f(I)}^J\circ \xi_f \circ \psi$ is the desired map.
\end{proof}

We are now ready to define the weak FI-group $\Torelli$.  For each finite set $I\subset \N$, 
define $\Torelli_I$ to be the subgroup of $\Mod_I$ acting trivially on $\HH_1(S_I;\Z)$.

For each pair of finite sets $I \subset J\subset \N$, 
define $\Torelli_I^J\colon \Torelli_I\to \Torelli_J$ to be the restriction of 
the map $\Mod_I^J\colon \Mod_I\to \Mod_J$ described above.  The condition 
in Definition~\ref{def:weakFIgroup}(iii) is automatically satisfied.

For each injection $f\colon I \into J$ between finite sets $I,J \subset \N$, choose an arbitrary embedding $\phi_f\colon S_I \into S_J$ inducing the map \eqref{eq:homologymap} on homology, as guaranteed by
Lemma~\ref{lemma:maponhomology}.  We define $\Torelli_f\colon \Torelli_I\to \Torelli_J$ 
to be the restriction of the induced map $(\phi_f)_{\ast}\colon \Mod_I\to \Mod_J$.  By Lemma~\ref{lemma:modconjugate}(ii), any two embeddings inducing the map \eqref{eq:homologymap} on homology are $\Torelli_J$-conjugate, so this gives a well-defined homomorphism-modulo-conjugacy $\Torelli_f$. Moreover since the maps \eqref{eq:homologymap} are preserved under composition, 
Lemma~\ref{lemma:modconjugate}(ii) guarantees that these homomorphisms-modulo-conjugacy satisfy
the compatibility condition in Definition~\ref{def:weakFIgroup}(ii).  This 
concludes the construction of the weak FI-group $\Torelli$.

\para{The Johnson filtration}
For $k \geq 1$, we define a normal weak
FI-subgroup $\Torelli(k)$ of $\Torelli$ as follows.
For each finite set $I \subset \N$, choose a basepoint $\ast_I\in \partial S_I$
and let $\pi_1(S_I)\coloneq\pi_1(S_I,\ast_I)$.  We define $\Torelli(k)_I$ to
be the kernel of the action of $\Mod_I$ on $\pi_1(S_I) / \gamma_{k+1}(\pi_1(S_I))$. This kernel does not depend on the choice of basepoint. Note that $\Torelli(1)=\Torelli$. The following
lemma guarantees that $\Torelli(k)\normal \Torelli$.

\begin{lemma}
For any $k\geq 1$, if $f\colon I \into J$ is an injection between finite sets
$I,J \subset \N$, then $\Torelli_f(\Torelli(k)_I) \subset \Torelli(k)_J$.
\end{lemma}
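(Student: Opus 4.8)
The plan is to follow the proof of the analogous lemma for $\AF$ in \S\ref{section:autfi}, the one extra ingredient being the Seifert--van Kampen theorem, which is needed because $\pi_1(S_J)$ is no longer literally a free product of the relevant subgroups. First I would fix $\varphi\in\Torelli(k)_I$ and, using the chosen embedding $\phi_f\colon S_I\into S_J$ inducing \eqref{eq:homologymap} on homology, set $\tilde\varphi\coloneq(\phi_f)_{\ast}(\varphi)\in\Mod_J$; since $\Torelli(k)_J$ is normal in $\Mod_J$, it suffices to show that this particular representative of $\Torelli_f(\varphi)$ lies in $\Torelli(k)_J$. Writing $S'\coloneq\phi_f(S_I)$ and $S''\coloneq S_J\setminus\Interior(S')$, the homeomorphism $\tilde\varphi$ restricts to the identity on $S''$ and, on $S'$, to $\phi_f\circ\varphi\circ\phi_f^{-1}$; in particular it fixes $\partial S'$ pointwise, and hence fixes the point $q\coloneq\phi_f(\ast_I)\in\partial S'$. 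Since $\Torelli(k)_J$ does not depend on the choice of basepoint, I would compute fundamental groups based at $q$.

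Next I would isolate two elementary facts. First, for any subgroup $H_0\leq H$ one has $\gamma_{k+1}(H_0)\subset\gamma_{k+1}(H)$. Second, for any automorphism $\theta$ of a group $H$, the set $\{h\in H\mid \theta(h)h^{-1}\in\gamma_{k+1}(H)\}$ is a subgroup of $H$ --- this follows from the normality of $\gamma_{k+1}(H)$ via the identity $\theta(gh)(gh)^{-1}=(\theta(g)g^{-1})\cdot g(\theta(h)h^{-1})g^{-1}$ and the analogous computation for inverses --- so $\theta$ acts trivially on $H/\gamma_{k+1}(H)$ whenever it does so on a generating set of $H$. This second fact is exactly the mechanism used implicitly in the $\AF$ argument, where one concludes from triviality of the action on $F_{f(I)}$ and on $F_{J-f(I)}$ that it is trivial on all of $F_J/\gamma_{k+1}(F_J)$.

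With these in hand I would argue as follows. Seifert--van Kampen applied to the decomposition $S_J=S'\cup S''$, whose pieces meet in a connected collar of $\partial S'$ containing $q$, shows that $\pi_1(S_J,q)$ is generated by its subgroups $\pi_1(S',q)$ and $\pi_1(S'',q)$. On $\pi_1(S'',q)$ the map $\tilde\varphi$ is the identity. On $\pi_1(S',q)$, the homeomorphism $\phi_f\colon(S_I,\ast_I)\to(S',q)$ induces an isomorphism $\pi_1(S_I,\ast_I)\cong\pi_1(S',q)$ conjugating the action of $\varphi$ to the action of $\tilde\varphi$; since $\varphi\in\Torelli(k)_I$ acts trivially on $\pi_1(S_I)/\gamma_{k+1}(\pi_1(S_I))$ and isomorphisms preserve the lower central series, $\tilde\varphi$ acts trivially on $\pi_1(S',q)/\gamma_{k+1}(\pi_1(S',q))$, hence --- by the first fact, since $\gamma_{k+1}(\pi_1(S',q))\subset\gamma_{k+1}(\pi_1(S_J,q))$ --- trivially modulo $\gamma_{k+1}(\pi_1(S_J,q))$ as well. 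Thus $\tilde\varphi$ acts trivially, modulo $\gamma_{k+1}(\pi_1(S_J,q))$, on a generating set of $\pi_1(S_J,q)$, so by the second fact $\tilde\varphi\in\Torelli(k)_J$, as desired.

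The step I expect to require the most care is the bookkeeping in the middle of the last paragraph: identifying the $\tilde\varphi$-action on the subgroup $\pi_1(S',q)\leq\pi_1(S_J,q)$ with the $\phi_f$-conjugate of the $\varphi$-action, and being careful about basepoints. Choosing $q$ on $\partial S'$, which is legitimate because $\Torelli(k)_J$ is basepoint-independent, is what keeps this clean; had I instead insisted on the basepoint $\ast_J\in\partial S_J$, I would need to conjugate along an arc from $\ast_J$ to $q$ lying in $S''$ (which $\tilde\varphi$ fixes pointwise), after which the argument would go through verbatim. Everything else --- the van Kampen generation statement and the two elementary facts --- is routine.
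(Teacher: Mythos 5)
Your proposal is correct and is essentially the paper's own argument: decompose $\pi_1(S_J)$ using the image subsurface and its complement, note that the induced map takes $\gamma_{k+1}(\pi_1(S_I))$ into $\gamma_{k+1}(\pi_1(S_J))$ so the action is trivial there modulo $\gamma_{k+1}$, that it is literally the identity on loops in the complement, and conclude from generation. The only cosmetic difference is that the paper keeps the basepoint $\ast_J\in\partial S_J$ and conjugates $\pi_1$ of the subsurface along an arc $\lambda$ in the complement to get a free-product decomposition, which is exactly the variant you mention in your final paragraph.
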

\begin{proof}
Let $\lambda$ be an arc in $S_J \setminus \Interior(S_{f(I)})$ joining 
the basepoint $\ast_J\in\partial S_J$ to the basepoint $\ast_{f(I)}\in\partial S_{f(I)}$.  
There is an injection $\pi_1(S_{f(I)}) \into \pi_1(S_J)$ that takes 
$\delta \in \pi_1(S_{f(I)})$
to $\lambda \cdot \delta \cdot \lambda^{-1}$; we will identify $\pi_1(S_{f(I)})$ with its image
in $\pi_1(S_J)$.  The free group $\pi_1(S_J)$ can then be decomposed as a free
product $\pi_1(S_{f(I)}) \ast U$, where $U$ is a subgroup generated by loops
that lie entirely in $S_J \setminus \Interior(S_{f(I)})$.

Consider $\varphi \in \Torelli(k)_I$.
Since the embedding $\psi_f\colon S_I \into S_J$ induces a map taking
$\gamma_{k+1}(\pi_1(S_I))$ into 
$\gamma_{k+1}(\pi_1(S_{f(I)})) \subset \gamma_{k+1}(\pi_1(S_J))$,
the mapping class $\Torelli_f(\varphi)$ acts as the identity
on the image of $\pi_1(S_{f(I)})$ in $\pi_1(S_J) / \gamma_{k+1}(\pi_1(S_J))$.  
The mapping class
$\Torelli_f(\varphi)$ also acts as the identity on 
$S_J \setminus \Interior(S_{f(I)})$, and thus certainly acts
as the identity on the image of
$U$ in $\pi_1(S_J)/\gamma_{k+1}(\pi_1(S_J))$.  We conclude that $\Torelli_f(\varphi) \in \Torelli(k)_J$.
\end{proof}

\begin{proposition}
\label{prop:torellik}
$\{\Torelli(k)\}_{k=1}^\infty$ is a central filtration of 
$\Torelli=\Torelli(1)$ of finite rank.
\end{proposition}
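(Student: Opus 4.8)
The plan is to follow the proof of Proposition~\ref{prop:IAk} essentially verbatim, replacing $\Aut(F_n)$ by $\Mod_g^1$ and $F_n$ by $\pi_1(S_I)$. First I would fix an arbitrary finite set $I\subset\N$, put $g\coloneq\abs{I}$, and use the fact (recorded above) that $S_I$ is a genus $g$ surface with one boundary component: choosing the basepoint $\ast_I\in\partial S_I$ gives an isomorphism $\Mod_I\cong\Mod_g^1$ carrying $\Torelli(k)_I$ to $\Torelli_g^1(k)$ for all $k\geq 1$. The nested inclusions $\Torelli(1)\supset\Torelli(2)\supset\cdots$ are then immediate from $\gamma_{k+1}(\pi_1(S_I))\supset\gamma_{k+2}(\pi_1(S_I))$, and since each $\Torelli(k)\normal\Torelli$ has already been established, this yields the required descending chain of normal weak FI-subgroups. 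What remains is to verify, for each $k\geq 1$ and each finite $I$, the two conditions in Definition~\ref{definition:centralfiltration}: the centrality $[\Torelli(1)_I,\Torelli(k)_I]\subset\Torelli(k+1)_I$, and that $\Torelli(k)_I/\Torelli(k+1)_I$ is finitely generated. Both of these are conditions on the groups $\Torelli(k)_I$ one set at a time, so via the identification above they are equivalent to: $[\Torelli_g^1(1),\Torelli_g^1(k)]\subset\Torelli_g^1(k+1)$, and $\Torelli_g^1(k)/\Torelli_g^1(k+1)$ is a finite-rank abelian group.

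For these two facts I would invoke the higher Johnson homomorphisms. Writing $H\coloneq\HH_1(\Sigma_g^1;\Z)\cong\Z^{2g}$, the group $\pi_1(\Sigma_g^1)$ is free, so its associated graded $\gr(\pi_1(\Sigma_g^1))$ is the free Lie ring on $H$; let $\Lie_{k+1}(H)$ denote its degree-$(k+1)$ piece. The $k^{\text{th}}$ Johnson homomorphism is a homomorphism $\tau_k\colon\Torelli_g^1(k)\to\Hom(H,\Lie_{k+1}(H))$, and the only properties I need (see Satoh~\cite{SatohSurvey}) are that $\ker(\tau_k)=\Torelli_g^1(k+1)$ and that $\tau_k$ is $\Mod_g^1$-equivariant, where $\Mod_g^1$ acts on $\Torelli_g^1(k)$ by conjugation and on $\Hom(H,\Lie_{k+1}(H))$ through its action on $H$. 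The kernel statement exhibits $\Torelli_g^1(k)/\Torelli_g^1(k+1)$ as a subgroup of the finitely generated abelian group $\Hom(H,\Lie_{k+1}(H))$, giving the finite-rank condition. For the centrality condition, note that $\Torelli_g^1=\Torelli_g^1(1)$ acts trivially on $H$, hence trivially on $\Hom(H,\Lie_{k+1}(H))$, so equivariance gives $\tau_k(\psi\varphi\psi^{-1})=\tau_k(\varphi)$ for $\psi\in\Torelli_g^1(1)$ and $\varphi\in\Torelli_g^1(k)$; thus $\tau_k$ annihilates $[\Torelli_g^1(1),\Torelli_g^1(k)]$, and the kernel statement then gives $[\Torelli_g^1(1),\Torelli_g^1(k)]\subset\Torelli_g^1(k+1)$. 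This is the identical bookkeeping used in the proof of Proposition~\ref{prop:IAk}.

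I do not expect a genuine obstacle: the argument is a transcription of the $\Aut(F_n)$ case, with the only substantive change being that the conjugation-invariance of $\tau_k$ comes from the $\Sp_{2g}(\Z)$-equivariance of the surface Johnson homomorphism (rather than the $\GL_n(\Z)$-equivariance in the free-group setting). The one point worth a sentence of care is that all three requirements in Definition~\ref{definition:centralfiltration} — normality of each $\Torelli(k)$, the filtration inclusions, and the finite-rank condition — are imposed set-by-set on the groups $\Torelli(k)_I$, so that nothing about the homomorphisms-modulo-conjugacy $\Torelli_f$ enters the verification beyond the already-established fact $\Torelli(k)\normal\Torelli$.
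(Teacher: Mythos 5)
Your proposal is correct and matches the paper's argument exactly: the paper also reduces to the chain of inclusions coming from $\gamma_{k+1}(\pi_1(S_I))\supset\gamma_{k+2}(\pi_1(S_I))$ and then cites the higher Johnson homomorphisms $\tau_k\colon\Torelli_g^1(k)\to\Hom(H,\Lie_{k+1}(H))$, with $\ker(\tau_k)=\Torelli_g^1(k+1)$ and conjugation-invariance, precisely as in the proof of Proposition~\ref{prop:IAk}. The only difference is that the paper leaves these details as ``just as in Proposition~\ref{prop:IAk}'' while you write them out.
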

\begin{proof}
Since $\gamma_2(\pi_1(S_I))\supset \gamma_3(\pi_1(S_I))\supset \gamma_4(\pi_1(S_I))\supset \cdots$, we have
\[\Torelli=\Torelli(1) \supset \Torelli(2) \supset \Torelli(3) \supset \cdots.\]
We must show for $k\geq 1$ that
$[\Torelli(1)_I,\Torelli(k)_I] \subset \Torelli(k+1)_I$ and
that $\Torelli(k)_I/\Torelli(k+1)_I$ is a finite-rank abelian group. 
Just as in Proposition~\ref{prop:IAk}, this is an immediate consequence of the higher Johnson homomorphisms for $\Torelli_g^{1}(k)$ (see \cite{SatohSurvey}).
\end{proof}

\subsection{Generating the Torelli group and its Johnson filtration}
\label{section:modgen}

Identify $\Sigma_g^1$ with $S_{[g]}$, so for all subsets
$I \subset \{1,\ldots,g\}$ we have a subsurface $S_I \subset \Sigma_g^1$.  As notation, if $S$
is a subsurface of $\Sigma_g^1$, we denote by $\Mod_g^1(S)$ the subgroup of $\Mod_g^1$ consisting
of mapping classes that are supported on $S$.  Also, define $\Torelli_g^1(S) \coloneq \Torelli_g^1 \cap \Mod_g^1(S)$.
The following result is a strengthening of the main result of the second author in \cite{PutmanSmallGenset}.

\begin{proposition}[Torelli generators]
\label{proposition:modgentorelli}
For $g \geq 3$, the group $\Torelli_g^1$ is generated by the subgroups
\[\Set{$\Torelli_g^1(S_I)$}{$I \subset \{1,\ldots,g\}$ satisfies $\abs{I} = 3$}.\]
\end{proposition}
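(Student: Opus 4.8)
The plan is to reduce this to the second author's theorem in \cite{PutmanSmallGenset}, which already gives a generating set for $\Torelli_g^1$ consisting of elements supported on genus $3$ subsurfaces—but with \emph{multiple} boundary components—and then to ``push'' each such generator onto a standard one-holed genus $3$ subsurface $S_I$. The starting point is the generating set from \cite{PutmanSmallGenset}: for $g\geq 3$, the group $\Torelli_g^1$ is generated by the subgroups $\Torelli_g^1(R)$ as $R$ ranges over a specified finite collection of genus $3$ subsurfaces of $\Sigma_g^1$ (each with several boundary components). It therefore suffices to show that each such $\Torelli_g^1(R)$ is contained in the subgroup $H$ generated by the $\Torelli_g^1(S_I)$ with $\abs{I}=3$.

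First I would recall the ``change of coordinates'' principle for subsurfaces: since any two subsurfaces of the same homeomorphism type whose complements also have the same homeomorphism type are carried to one another by a homeomorphism of $\Sigma_g^1$, and since $\Torelli_g^1$ is normal in $\Mod_g^1$, one has $h\cdot \Torelli_g^1(R)\cdot h^{-1}=\Torelli_g^1(hR)$ for $h\in\Mod_g^1$. This reduces matters to two tasks: (1) show that any $\Torelli_g^1(R)$, with $R$ an arbitrary connected genus $3$ subsurface, lies in $H$ after possibly conjugating; and (2) handle the conjugation bookkeeping so that the final statement is about the \emph{specific} standard subsurfaces $S_I$, not just some conjugate of them. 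For (2), note that any one-holed genus $3$ subsurface of $\Sigma_g^1$ whose image in $\HH_1(\Sigma_g^1;\Z)$ is $\langle a_i,b_i\mid i\in I\rangle$ is carried to $S_I$ by an element of $\Torelli_g^1$ (using that the mapping class group acts transitively on such configurations and the Torelli group is the kernel of the action on homology), so such a subsurface contributes nothing new; the work is in reducing a general $R$ to this homologically standard position.

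The key geometric input—which I expect to be the main obstacle—is the following: if $R$ is a genus $3$ subsurface of $\Sigma_g^1$ with several boundary components, then $\Torelli_g^1(R)$ is generated by elements each supported on a \emph{one-holed} genus $\leq 3$ subsurface. The idea is that a surface $R$ of genus $3$ with $b$ boundary components can itself be built up inside $\Sigma_g^1$ as a union of handles and ``pairs of pants,'' and $\Torelli(R)$ is generated by Torelli elements supported on connected subsurfaces of $R$ that separate off a single handle or bounding pair—these are precisely the genus $1$ bounding-pair maps and genus $\leq 2$ separating twists living inside $R$, by Johnson's theorems applied to $R$ (or rather to a one-holed surface containing $R$). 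But a genus $1$ bounding pair map whose defining curves are non-separating in $\Sigma_g^1$ is supported on a one-holed subsurface of genus $2$ inside $\Sigma_g^1$, and any one-holed genus $\leq 2$ subsurface can be enlarged to a homologically standard one-holed genus $3$ subsurface $S_I$; so each such generator lies in some $\Torelli_g^1(S_I)$. The delicate point is ensuring these enlargements can always be taken to land on \emph{homologically standard} $S_I$—one must track the homology classes of the curves involved and use the change-of-coordinates principle within Torelli to standardize them, which is exactly where one needs $g\geq 3$ (to have room for the third handle) and where the multiple-boundary-component version from \cite{PutmanSmallGenset} must be invoked carefully.

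Concretely the steps are: (i) invoke \cite{PutmanSmallGenset} to reduce to showing each $\Torelli_g^1(R)$ with $R$ a genus $3$ piece lies in $H$; (ii) decompose $\Torelli_g^1(R)$ using Johnson's generation results (genus $1$ BP maps and genus $\leq 2$ separating twists) applied inside $R$, or inside a slightly larger one-holed surface; (iii) observe each resulting generator is supported on a one-holed subsurface of genus $\leq 2$, hence—after enlarging—on a one-holed genus $3$ subsurface; (iv) use the change-of-coordinates principle together with normality of $\Torelli_g^1$ and transitivity on symplectic-basis data to move that one-holed genus $3$ subsurface to a homologically standard $S_I$ by a \emph{Torelli} element, so that the generator already lies in $\Torelli_g^1(S_I)\subset H$. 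The one routine but necessary verification is that the generating set of \cite{PutmanSmallGenset} is indeed built from genus $3$ subsurfaces in the way just described; I would quote the precise statement of that paper's main theorem for this.
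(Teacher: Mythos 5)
Your reduction to the generating set of \cite{PutmanSmallGenset} is the same starting point as the paper, but the way you get from there to the subgroups $\Torelli_g^1(S_I)$ has two genuine gaps. The more serious one is step (iv): if a generator $\varphi$ is supported on a one-holed genus $3$ subsurface $S'$ and $t\in\Torelli_g^1$ carries $S'$ to $S_I$, you have only shown $t\varphi t^{-1}\in\Torelli_g^1(S_I)$, i.e.\ that $\varphi$ lies in a \emph{conjugate} of the subgroup $H$ you are building. Since $H$ is not known to be normal in $\Torelli_g^1$ (that is essentially what is at stake), you cannot conclude $\varphi\in H$; at best your argument proves that $\Torelli_g^1$ is \emph{normally} generated by the $\Torelli_g^1(S_I)$. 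That is strictly weaker than the proposition and, crucially, too weak for the application: Theorem~\ref{maintheorem:boundedgeneration} needs the weak FI-group $\Torelli$ to be boundedly \emph{generated}, and Remark~\ref{remark:weakboundedgen} explains why normal generation would not do. The same conflation appears when you say a homologically standard one-holed genus $3$ subsurface ``contributes nothing new'' because it is Torelli-equivalent to $S_I$; moreover a Torelli element can only carry a subsurface to $S_I$ if its homology image is already the standard subspace, so supports with non-standard homology image cannot be standardized within Torelli at all.

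The second gap is in steps (ii)--(iii). Johnson's generation theorems apply to $\Torelli_g^1$ and $\Torelli_g$, not to the relative group $\Torelli_g^1(R)$ for a genus $3$ subsurface $R$ with several boundary components, and the escape clause ``a one-holed surface containing $R$'' does not help: for the subsurfaces actually appearing in \cite{PutmanSmallGenset} (the surfaces $\YY_I$, whose complement is a union of one-holed tori), any one-holed subsurface containing $R$ has genus close to $g$, so applying Johnson there only recovers generation of essentially all of $\Torelli_g^1$ by bounding pair maps, with no control that they are supported in $R$. Even invoking generation of subsurface Torelli groups by separating twists and bounding pair maps (as in \cite{PutmanCutPasteTorelli}) gives no bound on the genus of those twists and pairs, so your claim in (iii) that every generator sits inside a one-holed subsurface of genus at most $2$ is unsubstantiated. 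The paper sidesteps both problems: it proves directly that $\Torelli_g^1(\YY_I)\subset\Gamma$ via the Birman exact sequence, exhibiting $\Torelli_g^1(\YY_I)$ as generated by the split image of $\Torelli_g^1(S_I)$ together with the boundary twists $T_{\partial X_i}$ and explicit handle-sliding elements, each of which is supported on one of the \emph{specific} standard subsurfaces $S_{\{i\}}$ or $S_{\{i,j\}}$, so no conjugation and no Johnson-type generation theorem for multi-boundary subsurfaces is ever needed. To repair your argument you would need either a proof that $H$ is all of $\Torelli_g^1$ without conjugating generators out of standard position, or a genus-bounded generation theorem for the relative groups $\Torelli_g^1(\YY_I)$; the Birman exact sequence argument is exactly such a replacement.
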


Before proving Proposition~\ref{proposition:modgentorelli}, we deduce Theorem~\ref{maintheorem:smallgenset} from it.

\begin{proof}[{Proof of Theorem~\ref{maintheorem:smallgenset}}]
Johnson~\cite{JohnsonFinite} proved that $\Torelli_3^1$ is generated by $42$ elements. There are $\binom{g}{3}$  subsurfaces $S_I$
in Proposition~\ref{proposition:modgentorelli}, and each subgroup $\Torelli_g^1(S_I)$ is isomorphic to $\Torelli_3^1$,   so we deduce that $\Torelli_g^1$ is generated by $42 \binom{g}{3}$ elements.
There is a surjection $\Torelli_g^1 \twoheadrightarrow \Torelli_g$ obtained by gluing a disc to $\partial \Sigma_g^1$ and extending
mapping classes over the disc by the identity, so $\Torelli_g$ is also generated by $42 \binom{g}{3}$ elements.
\end{proof}

\begin{proof}[Proof of Proposition~\ref{proposition:modgentorelli}]
Let $\Gamma \subset \Torelli_g^1$ be the subgroup generated by the subgroups $\Torelli_g^1(S_I)$ for $\abs{I}=3$, or equivalently for $\abs{I}\leq 3$; our goal is to
prove that $\Gamma=\Torelli_g^1$.  We begin by describing some simple elements of $\Torelli_g^1$ that lie in $\Gamma$.

First, choose $i\in \{1,\ldots,g\}$. Recall the genus 1 subsurfaces $X_1,\ldots,X_g$ of $\Sigma_g^1$, which satisfy $X_i \subset S_I$ if and only if $i \in I$. The boundary curve $\partial X_i$ is a separating curve contained in $S_{\{i\}}$, so the Dehn twist $T_{\partial X_i}$ lies in $\Torelli_g^1(S_{\{i\}})\subset \Gamma$.

Next, choose $j\in \{1,\ldots,g\}$ with $j\neq i$, and let $\gamma$ be an embedded curve in $X_j$ based at $\ast_j\in \partial X_j$. The regular neighborhood of $\partial X_i\cup \delta_i\cup\delta_j\cup \gamma$ is a genus 0 surface with 3 boundary components. These 3 boundary components are isotopic to $\gamma$, the separating curve $\partial X_i$, and a third curve $\gamma'$ homologous to $\gamma$.  The mapping class $T_\gamma T_{\gamma'}^{-1}$ has the effect of ``sliding'' the handle $X_i$ around the curve $\delta_i^{-1}\delta_j\gamma\delta_j\delta_i^{-1}$ (though this notion is only well-defined modulo powers of $T_{\partial X_i}$); see \cite[Fact 4.7]{FarbMargalitPrimer}. Since $\gamma$ and $\gamma'$ are homologous, $T_\gamma T_{\gamma'}^{-1}$ lies in $\Torelli_g^1$. 
Since our regular neighborhood is contained in $S_{\{i,j\}}$, we have $T_\gamma T_{\gamma'}^{-1} \in \Torelli_g^1(S_{\{i,j\}})\subset \Gamma$.

For any subset
$I \subset \{1,\ldots,g\}$, define the subsurface
\[\YY_I \coloneq \Sigma_g^1 \setminus \big(\bigcup_{i \notin I} \Interior(X_i) \big),\]
so $\YY_I$ is a genus $\abs{I}$ surface with $g-\abs{I}+1$ boundary components. See Figure~\ref{figure:splittingsurface} for an example.
For $i\notin I$, let $\ZZ_I^{(i)}$ be the genus $\abs{I}$ surface with $g-\abs{I}$ boundary components obtained from $\YY_I$ by attaching a single disk to the boundary component $\partial X_i$. We will next show that the kernel of the corresponding map $\pi^{(i)}\colon \Torelli_g^1(\YY_I)\onto \Torelli_g^1(\ZZ_I^{(i)})$ is contained in $\Gamma$.
\Figure{figure:splittingsurface}{SplittingSurface}{For $I=\{2,3,5\}$, the surfaces $S_I$ (dark gray), $\YY_I$ (light and dark gray), and $\ZZ_I$.}{85}

Certainly $T_{\partial X_i}$ lies in $\ker(\pi^{(i)})$. Birman proved in \cite{BirmanSeq} that the quotient $\ker(\pi^{(i)})/\langle T_{\partial X_i}\rangle$ is isomorphic to $\pi_1(\ZZ_I^{(i)})$, with a loop in $\pi_1(\ZZ_I^{(i)})$ corresponding to the mapping class that slides the handle $X_i$ around that loop. The fundamental group $\pi_1(\ZZ_I^{(i)})$ can be generated by elements of the form $\delta_i^{-1}\delta_j\gamma\delta_j\delta_i^{-1}$ where $\gamma$ is an embedded curve in $X_j$: for each $j\not\in I$ we take $\gamma=\partial X_j$, and for each $j\in I$ we take two embedded curves generating $\pi_1(X_i)$. We saw earlier that  $\Gamma$ contains the mapping class $T_\gamma T_{\gamma'}^{-1}$ which slides the handle $X_i$ along any such loop, and so we conclude that $\ker(\pi^{(i)})\subset \Gamma$.

Let $\ZZ_I$ be the abstract surface obtained from $\YY_I$ by attaching disks to each of the boundary components $\partial X_i$ for $i\notin I$, so $\ZZ_I$ is a genus $\abs{I}$ surface with one boundary component. We can find an identification of $\ZZ_I$ with $S_I$ so that the composition $S_I\into \YY_I\into \ZZ_I\cong S_I$ is isotopic to the identity. It follows that the resulting homomorphism $\pi\colon \Torelli_g^1(\YY_I)\onto \Torelli_g^1(S_I)$ is  a split surjection, with section given by the inclusion 
 $\Torelli_g^1(S_I)\into \Torelli_g^1(\YY_I)$.  It follows from the classical Fadell--Neuwirth exact sequences 
\cite{FadellNeuwirth} that the kernel $\ker(\pi)\subset \Torelli_g^1(\YY_I)$ is generated by the subgroups $\ker(\pi^{(i)})$ for all $i\notin I$ (in fact, $\ker(\pi)$ is isomorphic to the $(g-\abs{I})$-strand pure framed braid group on the surface $S_I$, though we will not use this directly). We conclude from the previous paragraph that $\ker(\pi)\subset \Gamma$.
 
When $\abs{I}=3$ we have $\Torelli_g^1(S_I)\subset \Gamma$ by definition, so $\Torelli_g^1(\YY_I)\subset \Gamma$ as well. The second author proved in \cite{PutmanSmallGenset} that $\Torelli_g^1$ is generated by the set
\[\Set{$\Torelli_g^1(\YY_I)$}{$I \subset \{1,\ldots,g\}$ satisfies $\abs{I} = 3$},\] so we conclude that $\Gamma=\Torelli_g^1$, as desired.
\end{proof}

We would like to conclude from Proposition~\ref{proposition:modgentorelli} that the weak FI-group $\Torelli$ is boundedly generated. However since $\Torelli$ is only a weak FI-group, this conclusion is not at all automatic (cf. Remark~\ref{remark:weakboundedgen}). To do this, we need the
following lemma. Along with Proposition~\ref{proposition:modgentorelli}, this lemma is the reason for our care in \S\ref{section:modfi} when defining the system of subsurfaces $S_I$.
\begin{lemma}
\label{lemma:allsame}
Let $J,J' \subset \N$ be finite sets such that $\abs{J}=\abs{J'}$.  There exists a
bijection $\sigma\colon J \rightarrow J'$ and an orientation-preserving homeomorphism
$\phi\colon S_J \rightarrow S_{J'}$ such that for all $I \subset J$ the subsurface
$\phi(S_I)$ of $S_{J'}$ is isotopic to the subsurface $S_{\sigma(I)}$.
\end{lemma}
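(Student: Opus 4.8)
The plan is to exploit the fact that the subsurfaces $S_I$ were constructed in a completely uniform way from the handles $X_i$ strung along the ray $\alpha$: the combinatorial picture of the system $\{S_I : I\subseteq J\}$ depends only on the linearly ordered set $J$, hence only on $\abs{J}$ after relabeling. Since the case $J=J'$ is trivial (take $\phi=\id$ and $\sigma=\id$), I would assume $J\neq J'$ and set $L\coloneq J\cup J'$, a finite subset of $\N$; since $\abs{J}=\abs{J'}$, we then have $J,J'\subsetneq L$, and $S_J,S_{J'}\subseteq S_L$, so the whole argument takes place inside the finite-type surface $S_L$. Write $J=\{j_1<\cdots<j_n\}$ and $J'=\{j'_1<\cdots<j'_n\}$, and let $\sigma\colon J\to J'$ be the order-preserving bijection $\sigma(j_\ell)=j'_\ell$. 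Abbreviate $K_I\coloneq\bigcup_{i\in I}(\delta_i\cup X_i)$, and recall that by construction $S_I$ is a closed regular neighborhood of $K_I$ in $S_\N$, with $K_I\subseteq K_{I''}$ and $S_I\subseteq S_{I''}$ whenever $I\subseteq I''$.

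The heart of the argument will be to produce an orientation-preserving self-homeomorphism $H$ of $S_L$ with $H(\ast)=\ast$, $H(\delta_{j_\ell})=\delta_{j'_\ell}$, and $H(X_{j_\ell})=X_{j'_\ell}$ for $1\le\ell\le n$; note that then $H(K_I)=K_{\sigma(I)}$ for every $I\subseteq J$. To build $H$, I would observe that inside $S_L$ both of the data $\big(\ast,\{\delta_{j_\ell}\}_\ell,\{X_{j_\ell}\}_\ell\big)$ and $\big(\ast,\{\delta_{j'_\ell}\}_\ell,\{X_{j'_\ell}\}_\ell\big)$ consist of a one-holed-torus handle for each of $n$ of the handles of $S_L$, together with $n$ disjoint arcs emanating from the common point $\ast$ and reaching these handles in a fixed order along the spine. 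These two configurations are combinatorially identical (the handles of $S_L$ are interchangeable, and the two arc systems have the same type), so the change-of-coordinates principle for the finite-type surface $S_L$ furnishes the homeomorphism $H$.

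Granting $H$, one finishes by a standard uniqueness-of-regular-neighborhoods argument. Both $H(S_J)$ and $S_{J'}$ are closed regular neighborhoods of $K_{J'}$ in $S_L$ (for $S_{J'}$, note $K_{J'}\subseteq S_{J'}\subseteq\Interior(S_L)$ since $J'\subsetneq L$), so there is an ambient isotopy $\{G_t\}$ of $S_L$, fixing $K_{J'}$ pointwise, with $G_0=\id$ and $G_1(H(S_J))=S_{J'}$. Put $\phi\coloneq(G_1\circ H)|_{S_J}\colon S_J\xrightarrow{\ \cong\ }S_{J'}$, an orientation-preserving homeomorphism. For $I\subseteq J$ we have $\phi(S_I)=G_1(H(S_I))$, which is a closed regular neighborhood of $G_1(H(K_I))=K_{\sigma(I)}$ inside $S_{J'}$; since $S_{\sigma(I)}$ is also a closed regular neighborhood of $K_{\sigma(I)}$ inside $S_{J'}$, uniqueness of regular neighborhoods (applied in $S_{J'}$) shows $\phi(S_I)$ is isotopic to $S_{\sigma(I)}$, which is exactly the assertion of the lemma.

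I expect the construction of $H$ — equivalently, the assertion that the decorated surface $(S_J;\{S_I\}_{I\subseteq J})$ is homeomorphic to $(S_{J'};\{S_{I'}\}_{I'\subseteq J'})$ compatibly with $\sigma$ — to be the only real point of the proof; everything else is bookkeeping with regular neighborhoods. This is precisely where the care taken in \S\ref{section:modfi} in setting up the concentric subsurfaces $S_I$ gets used, and a tidy way to package it is to record once and for all an abstract model surface $\mathbb{S}_n$ carrying a subsurface system indexed by subsets of $[n]$, built from $n$ one-holed tori glued to a disk along boundary arcs in a fixed cyclic order, to note that $S_J$ is literally an instance of $\mathbb{S}_n$ with its handles indexed by $J$, and to invoke the evident $\Sym_n$-symmetry of the model — the same "relabeling is harmless" phenomenon used, in more delicate form, to define the weak FI-group $\Mod$ itself.
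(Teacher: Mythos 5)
Your proof is correct and follows essentially the same route as the paper's: take the order-preserving bijection $\sigma$, use the change-of-coordinates principle to match the defining configurations $(\ast,\{\delta_i\},\{X_i\})$, and conclude from the fact that each $S_I$ is a regular neighborhood of $\bigcup_{i\in I}(\delta_i\cup X_i)$. The only difference is packaging: the paper produces $\phi\colon S_J\to S_{J'}$ directly by change of coordinates and leaves the final uniqueness-of-regular-neighborhoods step implicit, whereas you first build an ambient self-homeomorphism of $S_{J\cup J'}$ and then correct it by an ambient isotopy, spelling that step out.
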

\begin{proof}
Let $\sigma\colon J \rightarrow J'$ be the unique order-preserving bijection. Recall from \S\ref{section:modfi} the basepoint $\ast$, the genus 1 subsurfaces $X_1,X_2,\ldots$, and the arcs $\delta_1,\delta_2,\ldots$ used to define the surfaces $S_I$. Using
the standard ``change of coordinates principle'' (see \cite[\S 1.3.2]{FarbMargalitPrimer}),
there exists a homeomorphism $\phi\colon S_J \rightarrow S_{J'}$ with the following three 
properties.
\begin{compactitem}
\item $\phi(\ast) = \ast$.
\item For all $i \in J$, we have $\phi(X_i) = X_{\sigma(i)}$.
\item For all $i \in J$, we have $\phi(\delta_i) = \delta_{\sigma(i)}$.
\end{compactitem}
From the definition of the surface $S_I$ we see that $\phi$ has the desired properties.
\end{proof}
We emphasize that Lemma~\ref{lemma:allsame} depends in an essential way on the precise details of our construction of the surfaces $S_I$ (unlike Lemmas~\ref{lemma:modconjugate} and \ref{lemma:maponhomology} above, which were rather tautological). We are now ready to prove Theorem~\ref{maintheorem:modgentorelli}.

\begin{proof}[{Proof of Theorem~\ref{maintheorem:modgentorelli}}]
We begin by showing that the weak FI-group $\Torelli$ is boundedly generated in degree 3. Fix a finite set $J\subset \N$. If $\abs{J}\leq 3$ the condition \eqref{eq:boundedgencondition} is vacuous, so assume that $\abs{J}>3$. Taking $g\coloneq \abs{J}$,  let $\sigma\colon J\to [g]$ be the bijection given by Lemma~\ref{lemma:allsame}, and $\phi\colon S_J\to S_{[g]}=\Sigma_g^1$ the corresponding homeomorphism.

Consider $I\subset J$ with $\abs{I}=3$. By construction, $\phi$ takes $S_I$ to the subsurface $S_{\sigma(I)}$ of $S_{[g]}$. Therefore the isomorphism $\phi_{\ast}\colon \Torelli_J\to \Torelli_g^1$ takes the subgroup $\Torelli_J(I)$ supported on $S_I$ to the subgroup $\Torelli_g^1(\sigma(I))$ supported on $S_{\sigma(I)}$. Proposition~\ref{proposition:modgentorelli} states that $\Torelli_g^1$ is generated by the subgroups $\Torelli_g^1(S_{\sigma(I)})$. We conclude that $\Torelli_J$ is generated by the subgroups $\Torelli_J(I)$ for $I\subset J$ satisfying $\abs{I}=3$. Therefore \eqref{eq:boundedgencondition} is satisfied, and the weak FI-group $\Torelli$ is boundedly generated in degree 3.

Proposition~\ref{prop:torellik} states that $\{\Torelli(k)\}_{k=1}^\infty$ is a central filtration of bounded rank. Applying Theorem~\ref{maintheorem:boundedgeneration}, we conclude that for all $k\geq 1$, there exists $B_k\geq 0$ so that $\Torelli(k)\normal \Torelli$ is boundedly normally generated in degree $B_k$. 

Fix $g\geq 0$, and let us apply this conclusion to $\Torelli(k)_{[g]}=\Torelli_g^1(k)$. The bounded normal generation of $\Torelli(k)\normal \Torelli$ states that $\Torelli(k)_{[g]}$ is generated by the $\Torelli_g^1$-conjugates of its subgroups $\Torelli(k)_{[g]}(I)$ for those $I\subset [n]$ with $\abs{I}\leq B_k$. The subgroup $\Torelli(k)_{[g]}(I)$ consists of those elements of $\Torelli_g^1(k)$ supported on the genus $\abs{I}$ subsurface $S_I\subset \Sigma_g^1$, so its $\varphi$-conjugate consists of those elements of $\Torelli_g^1(k)$ supported on the subsurface $\varphi(S_I)$. If $\varphi\in \Torelli_g^1$, the subsurface $\varphi(S_I)$ is homologically standard. Therefore $\Torelli_g^1(k)$ is generated by elements of $\Torelli_g^1(k)$ supported on homologically standard subsurfaces of genus $\leq B_k$, as desired.
\end{proof}

\section{Mod-\texorpdfstring{$p$}{p} filtrations}
\label{section:level}

Fix a prime $p \geq 2$.  In this section we discuss the modifications that must be done to our 
proofs of Theorems~\ref{maintheorem:modgentorelli}
and \ref{maintheorem:autgentorelli} to obtain proofs of
Theorems~\ref{maintheorem:modgenlevel} and \ref{maintheorem:autgenlevel}.  Almost
everything goes through verbatim.
There are only two places where additional work is necessary.

 The first occurs in the proofs
of Propositions~\ref{prop:IAk} and \ref{prop:torellik}, where the higher
Johnson homomorphisms are invoked.  These should be replaced with the
higher mod-$p$ Johnson homomorphisms constructed by Cooper
in \cite{CooperThesis}.  The second place where a new idea is needed
is in the analogues of Propositions~\ref{proposition:autgentorelli}
and \ref{proposition:modgentorelli}, which give generators for
$\IA_n$ and $\Torelli_g^1$.  We need generators for the level $p$
congruence subgroups $\Aut(F_n,p)$ and $\Mod_g^1(p)$.  These are given
in Propositions~\ref{proposition:autgenlevel} and \ref{proposition:modgenlevel} below. Given these results, the proofs of Theorems~\ref{maintheorem:modgenlevel} and \ref{maintheorem:autgenlevel} parallel exactly the proofs of Theorems~\ref{maintheorem:modgentorelli}
and \ref{maintheorem:autgentorelli}.

\subsection{Generators for \texorpdfstring{$\Aut(F_n,p)$}{Aut(Fn,p)}}
For a subset $I \subset \{1,\ldots,n\}$, let $\Aut(F_n,I)$ consist of automorphisms supported on the splitting $F_n=F_I\ast F_{[n]-I}$, as defined
in \S\ref{section:autgen}.  Define $\Aut(F_n,p,I) = \Aut(F_n,p) \cap \Aut(F_n,I)$.

\begin{proposition}[{Generators for $\Aut(F_n,p)$}]
\label{proposition:autgenlevel}
For any $n \geq 0$ and any prime $p \geq 2$, the group $\Aut(F_n,p)$ is generated by the subgroups
\[\big\{\!\Aut(F_n,p,I)\,\,\big|\,\,I\subset \{1,\ldots,n\}\text{ satisfies }\ \abs{I}\leq 3\big\}.\]
\end{proposition}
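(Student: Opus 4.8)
The plan is to reduce to the well-understood case of $\IA_n$ together with the Magnus generators, using the fact that $\Aut(F_n,p)$ is the kernel of the surjection $\Aut(F_n)\onto \GL_n(\Z/p)$. The first step is to recall that, since $\IA_n = \ker(\Aut(F_n)\onto \GL_n(\Z))$ is contained in $\Aut(F_n,p)$, there is a short exact sequence $1 \to \IA_n \to \Aut(F_n,p) \to \SL_n(\Z,p) \to 1$, where $\SL_n(\Z,p) = \ker(\SL_n(\Z) \onto \SL_n(\Z/p))$ is the level-$p$ congruence subgroup of $\GL_n(\Z)$ (the image lands in $\SL_n$ since elementary matrices have determinant $1$). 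So it suffices to produce two things: first, a lift of a generating set of $\SL_n(\Z,p)$ consisting of elements each supported on a splitting $F_I * F_{[n]-I}$ with $\abs{I}\leq 3$; and second, the observation from Proposition~\ref{proposition:autgentorelli} that $\IA_n$ itself is generated by the subgroups $\IA_n(I)\subset \Aut(F_n,p,I)$ with $\abs{I}\leq 2, 3$.

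The second step handles the congruence subgroup. The group $\SL_n(\Z,p)$ is generated by the elementary-matrix powers $E_{ij}(p)$ (the matrix with $p$ in the $(i,j)$ slot) together with their conjugates by $\SL_n(\Z)$ — this is a classical fact (Bass–Milnor–Serre, or Mennicke); more precisely $\SL_n(\Z,p)$ is \emph{normally} generated by a single $E_{ij}(p)$, but is actually \emph{generated} by the $E_{ij}(p)$ and the commutators $[E_{ij}(p), E_{k\ell}]$, all of which are products of matrices $E_{ab}(p^m)$ supported on at most three indices. Each generator $E_{ij}(p)$ lifts to the automorphism of $F_n$ sending $x_i \mapsto x_i x_j^p$ and fixing the other generators; this automorphism is supported on the rank-$2$ splitting $\langle x_i, x_j\rangle * \langle x_\ell \mid \ell \neq i,j\rangle$, hence lies in $\Aut(F_n,p,\{i,j\})$. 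Similarly the three-index generators coming from $[E_{ij}(p),E_{jk}]$-type relations lift to automorphisms supported on $\langle x_i,x_j,x_k\rangle * \langle x_\ell\mid \ell\neq i,j,k\rangle$, landing in $\Aut(F_n,p,\{i,j,k\})$. One must check these lifts do indeed lie in $\Aut(F_n,p)$ — but that is automatic, since each reduces mod $p$ to the corresponding elementary-matrix relation, which by construction lies in $\SL_n(\Z,p)$, i.e.\ maps to the identity in $\GL_n(\Z/p)$.

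The third step assembles the pieces. Let $\Gamma \leq \Aut(F_n,p)$ be the subgroup generated by all $\Aut(F_n,p,I)$ with $\abs{I}\leq 3$. By Step 2, $\Gamma$ surjects onto $\SL_n(\Z,p)$; by Proposition~\ref{proposition:autgentorelli} (noting $\IA_n(I)\subset \Aut(F_n,p,I)$ since $\IA_n\subset \Aut(F_n,p)$), $\Gamma$ contains $\IA_n$, which is exactly the kernel of $\Aut(F_n,p)\onto \SL_n(\Z,p)$. A subgroup of a group that surjects onto a quotient and contains the corresponding kernel is the whole group, so $\Gamma = \Aut(F_n,p)$, as desired.

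The main obstacle I anticipate is Step 2 — pinning down the \emph{correct} generating set for the congruence subgroup $\SL_n(\Z,p)$, not just normal generators, and verifying that every generator can be taken supported on $\leq 3$ indices. The subtlety is that $\SL_n(\Z,p)$ is \emph{not} generated by the $E_{ij}(p)$ alone for $n$ large (this is related to the failure of the congruence subgroup property in low rank, though for $n\geq 3$ one is in good shape); one genuinely needs the conjugates, and one must check that the relevant conjugates — $E_{k\ell}(1)\, E_{ij}(p)\, E_{k\ell}(-1)$ — are still supported on at most three distinct indices, which holds because conjugating $E_{ij}(p)$ by $E_{k\ell}(1)$ only produces nonzero off-diagonal entries in rows and columns indexed by $\{i,j,k,\ell\}$, and in fact by the commutator relations this can be rewritten using only the three indices actually involved in a nontrivial way. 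Making this bookkeeping airtight — ideally by citing a clean statement of the generation of $\SL_n(\Z,p)$ by short-support elements, or by an explicit induction on $n$ using column operations — is where the real work lies; everything else is formal.
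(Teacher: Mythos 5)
Your architecture is the same as the paper's (short exact sequence over the arithmetic group, absorb $\IA_n$ via Proposition~\ref{proposition:autgentorelli}, then lift small-support generators of the congruence subgroup), but there are two genuine gaps in the arithmetic step. First, the exact sequence is wrong for $p=2$: since $\Aut(F_n)\to\GL_n(\Z)$ is surjective (the inversion $x_1\mapsto x_1^{-1}$ has determinant $-1$), the image of $\Aut(F_n,p)$ is the full congruence subgroup $\GL_n(\Z,p)$, not $\SL_n(\Z,p)$. For odd $p$ these agree, but $\GL_n(\Z,2)/\SL_n(\Z,2)\cong\{\pm 1\}$, so your argument misses the determinant $-1$ part exactly in the case $p=2$, which the statement includes; the paper repairs this with the extra generator $\mathcal{N}_1$, lifted to the inversion automorphism supported on $\{1\}$. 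Your stated justification (``elementary matrices have determinant $1$'') does not address which automorphisms exist upstairs.

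Second, the generation statement you lean on for $\SL_n(\Z,p)$ is not a ``classical fact'' in the form you use it: Bass--Milnor--Serre and Mennicke give \emph{normal} generation by $E_{ij}(p)$, not generation, and the gap between the two is exactly the content that needs proof (the paper instead quotes Sury--Venkataramana, whose generating set includes the extra matrices $\mathcal{B}^n_i(p)$ precisely because the $E_{ij}(p)$ alone do not suffice). Your specific claim that the commutators $[E_{ij}(p),E_{k\ell}]$ are products of matrices $E_{ab}(p^m)$ is false: under the homomorphism $\SL_n(\Z,p)\to\SLLie_n(\Z/p)$ sending $\One_n+pA$ to $A\bmod p$, every $E_{ab}(p^m)$ maps to a matrix with zero diagonal, whereas $[E_{12}(p),E_{21}(1)]\equiv \One_n+p\,(e_{11}-e_{22}+e_{21})\pmod{p^2}$ has nonzero diagonal image. (This is the same obstruction showing the $E_{ij}(p)$ do not generate.) The good news is that your proposed generating set is in fact correct for $n\geq 3$ and can be verified by the method the paper uses for $\Sp_{2g}(\Z,p)$ in Lemma~\ref{lemma:splevelgen}: check that the images of your generators span all of $\SLLie_n(\Z/p)$, then invoke Tits to get $\SL_n(\Z,p^2)=\ker(\rho)$ from the $E_{ij}(p)$ alone; and your lifting step is fine, since the commutator of the lift $\widetilde{E}_{ij}(p)\in\Aut(F_n,p,\{i,j\})$ with a transvection lift lands in $\Aut(F_n,p,I)$ with $\abs{I}\leq 3$. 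But as written, the crucial generation lemma is asserted rather than proved (as you yourself flag), and the $p=2$ case is simply missing.
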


For the proof of Proposition~\ref{proposition:autgenlevel}, we will need a generating set for the level-$p$ congruence
subgroup $\SL_n(\Z,p)$ of $\SL_n(\Z)$, which is the kernel of the natural map
$\SL_n(\Z) \twoheadrightarrow \SL_n(\Z/p)$.  Given $r\in \Z$ and $1 \leq i,j \leq n$,
let $\epsilon^n_{ij}(r)$ be the $n \times n$ matrix with $(i,j)$ entry equal to $r$ and all other entries equal to zero.
For $1\leq i<n$, let $\beta^n_i(r)$ be the $n\times n$ matrix with $(i,i)$ and $(i,i+1)$ entries equal to $r$, with
$(i+1,i)$ and $(i+1,i+1)$ entries equal to $-r$, and all other entries equal to zero.

Given $r\in \Z$ and $i\neq j$, let
\[\mathcal{E}^n_{ij}(r) \coloneq \One_n + \epsilon^n_{ij}(r)\in \SL_n(\Z)\]
be the $n\times n$ elementary matrix whose diagonal entries are $1$ and whose $(i,j)$ entry is $r$.
Similarly, given $r\in \Z$ and $1\leq i<n$, let $\mathcal{B}^n_i(r)\coloneq \One_n+\beta^n_i(r)$.
For instance,
\[\mathcal{B}^4_2(7) = \left(\begin{array}{rrrr} 
1 & 0  & 0  & 0 \\
0 & 8  & 7  & 0 \\
0 & -7 & -6 & 0 \\
0 & 0  & 0  & 1 \end{array}\right).\]
We then have the following theorem of Sury--Venkataramana.

\begin{theorem}[Sury--Venkataramana~\cite{SuryVenkataramana}]
\label{theorem:sllevelgen}
For $n \geq 3$ and $p \geq 2$, the group $\SL_n(\Z,p)$ is generated by the set
\[\Set{$\mathcal{E}^n_{ij}(p)$}{$1 \leq i,j \leq n$, $i \neq j$} \cup \Set{$\mathcal{B}^n_i(p)$}{$1 \leq i < n$}.\]
\end{theorem}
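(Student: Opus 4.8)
The plan is to prove that the subgroup $H\le\SL_n(\Z)$ generated by the displayed set equals $\SL_n(\Z,p)$. One inclusion is immediate: every $\mathcal{E}^n_{ij}(p)$ and every $\mathcal{B}^n_i(p)$ is congruent to $\One_n$ modulo $p$, so $H\subseteq\SL_n(\Z,p)$. For the reverse inclusion I would invoke the classical solution of the congruence subgroup problem for $\SL_n(\Z)$ in rank $n\ge 3$ --- the theorem of Mennicke and of Bass--Lazard--Serre, equivalently the vanishing of $\mathrm{SK}_1(\Z,p\Z)$ together with surjective $K_1$-stability --- which says that $\SL_n(\Z,p)$ is the normal closure in $\SL_n(\Z)$ of $\{\mathcal{E}^n_{ij}(p):i\ne j\}$. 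Since these elementary matrices already lie in $H$, it then suffices to prove that $H$ is \emph{normal} in $\SL_n(\Z)$.

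Since $\SL_n(\Z)$ is generated by the elementary matrices $\mathcal{E}^n_{kl}(\pm 1)$, to prove normality it is enough to check that conjugating each generator $\mathcal{E}^n_{ij}(p)$ or $\mathcal{B}^n_m(p)$ of $H$ by any $\mathcal{E}^n_{kl}(\pm 1)$ lands back in $H$. The point of the $\mathcal{B}^n_i(p)$ becomes visible through the identities
\[
\mathcal{E}^n_{i+1,i}(-1)\,\mathcal{E}^n_{i,i+1}(p)\,\mathcal{E}^n_{i+1,i}(1)=\mathcal{B}^n_i(p),\qquad
\mathcal{E}^n_{i,i+1}(-1)\,\mathcal{E}^n_{i+1,i}(p)\,\mathcal{E}^n_{i,i+1}(1)=\mathcal{B}^n_i(p)^{-1},
\]
which one verifies by multiplying out the $2\times2$ block on the coordinates $\{i,i+1\}$: thus $\mathcal{B}^n_i(p)$ is exactly the conjugate of the adjacent elementary $\mathcal{E}^n_{i,i+1}(p)$ by the \emph{opposite} elementary $\mathcal{E}^n_{i+1,i}(-1)$. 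Such ``opposite conjugates'' are precisely the elements one must adjoin to the $\mathcal{E}^n_{ij}(p)$ in order to upgrade a normal generating set to an honest generating set.

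Most of the conjugation checks would be formal consequences of the Steinberg relations $[\mathcal{E}^n_{ab}(s),\mathcal{E}^n_{cd}(t)]=1$ for $b\ne c,\ a\ne d$ and $[\mathcal{E}^n_{ac}(s),\mathcal{E}^n_{cd}(t)]=\mathcal{E}^n_{ad}(st)$: as long as the index sets $\{k,l\}$ and $\{i,j\}$ (resp.\ $\{m,m+1\}$) are unequal, the conjugate ${}^{\mathcal{E}^n_{kl}(\pm1)}\mathcal{E}^n_{ij}(p)$ (resp.\ of $\mathcal{B}^n_m(p)$) rewrites as a product of matrices $\mathcal{E}^n_{**}(p)$ and $\mathcal{B}^n_*(p)$, because each commutator that appears has its single off-diagonal parameter lying in $p\Z$. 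The essential case is conjugation of $\mathcal{E}^n_{ij}(p)$ by the opposite $\mathcal{E}^n_{ji}(\pm1)$ (and the analogous conjugates of the $\mathcal{B}^n_m(p)$). For an adjacent pair $(i,j)$ this is exactly the identities above; for a general pair $(i,j)$ I would first write $\mathcal{E}^n_{ij}(p)=[\mathcal{E}^n_{ir}(p),\mathcal{E}^n_{rj}(1)]$ for some third index $r$ --- the step that uses $n\ge 3$ --- and push the opposite conjugation through the commutator, reducing to the adjacent case together with the formal cases.

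The hard part will be this last reduction: organizing the opposite-conjugation and commutator bookkeeping so that \emph{every} intermediate factor is visibly an element of $H$, rather than merely of $\SL_n(\Z,p)$. One cannot appeal to the normality of $H$ while the argument is still in progress, so the third-index decompositions must be arranged never to introduce an elementary matrix $\mathcal{E}^n_{**}(r)$ with $r\notin p\Z$, and the ``opposite'' conjugates that arise must always be reducible to conjugates of \emph{adjacent} elementary matrices --- that is, to the $\mathcal{B}^n_i(p)$ themselves. This is precisely where the particular choice of the generators $\mathcal{B}^n_i(p)$, and the hypothesis $n\ge 3$, enter in an essential way.
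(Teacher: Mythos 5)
Your easy inclusion $H\subseteq \SL_n(\Z,p)$, your identities exhibiting $\mathcal{B}^n_i(p)$ as the conjugate of $\mathcal{E}^n_{i,i+1}(p)$ by $\mathcal{E}^n_{i+1,i}(-1)$, and your reduction via Bass--Milnor--Serre (vanishing of the relative $\mathrm{SK}_1$ for $n\geq 3$, so that $\SL_n(\Z,p)$ is the normal closure in $\SL_n(\Z)$ of the $\mathcal{E}^n_{ij}(p)$) are all correct. But the argument stops exactly where the theorem begins: you must verify that $H$ is normalized by every $\mathcal{E}^n_{kl}(\pm 1)$, and the cases you defer are not routine bookkeeping. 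Your identities cover only the opposite conjugation with sign $-1$ at an adjacent pair; already $\mathcal{E}^n_{i+1,i}(1)\,\mathcal{E}^n_{i,i+1}(p)\,\mathcal{E}^n_{i+1,i}(-1)=\One_n+p\bigl(\epsilon^n_{i,i+1}-\epsilon^n_{i+1,i}-\epsilon^n_{ii}+\epsilon^n_{i+1,i+1}\bigr)$ is neither a generator nor an inverse of one, and needs its own expression as a word in $H$; the conjugates of $\mathcal{B}^n_m(p)$ by elementaries sharing one index are likewise untreated. More seriously, the proposed reduction of a non-adjacent opposite conjugation via $\mathcal{E}^n_{ij}(p)=[\mathcal{E}^n_{ir}(p),\mathcal{E}^n_{rj}(1)]$ does exactly what you say must be avoided: conjugating by $\mathcal{E}^n_{ji}(e)$ sends $\mathcal{E}^n_{rj}(1)$ to $\mathcal{E}^n_{rj}(1)\mathcal{E}^n_{ri}(-e)$, so unit-parameter elementaries (not in $H$) appear and do not visibly cancel, and the computation does not reduce to the adjacent case as claimed. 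Since, granted the congruence subgroup property, the entire content of Theorem~\ref{theorem:sllevelgen} is precisely this normality/word problem, the proposal has a genuine gap at its core; filling it by pure Steinberg-relation manipulation is at best delicate and plausibly requires level-$p^2$ input anyway.

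For context, the paper does not prove this statement; it is quoted from Sury--Venkataramana. But the paper's own proof of the parallel symplectic result, Lemma~\ref{lemma:splevelgen}, shows the efficient route, and the same argument works verbatim here and would also repair your proof: map $\SL_n(\Z,p)\to \SLLie_n(\Z/p)$ by $\One_n+pA\mapsto A\bmod p$; the kernel is $\SL_n(\Z,p^2)$ and the image is the trace-zero matrices, while your generators map to the $\epsilon^n_{ij}$ ($i\neq j$) and the $\beta^n_i$, which together span $\SLLie_n(\Z/p)$. Tits's theorem (\cite{TitsCongruence}, cited in the paper) gives $\SL_n(\Z,p^2)\subset H$ using the $\mathcal{E}^n_{ij}(p)$ alone, and these two facts immediately yield $H=\SL_n(\Z,p)$, with no normality verification and no appeal to the congruence subgroup property. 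If you prefer to keep your normality strategy, you will still almost certainly need Tits's result to handle the ``hard'' opposite-conjugation cases, e.g.\ by checking them only modulo $p^2$ in the abelian quotient $\SLLie_n(\Z/p)$.
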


Let $\GL_n(\Z,p)$ be the level-$p$ congruence subgroup $\ker(\GL_n(\Z) \rightarrow \GL_n(\Z/p))$.  For
any $M\in \GL_n(\Z)$ we have $\det M=\pm 1$; moreover, if $M\in \GL_n(\Z,p)$, then $M\equiv \One_n\bmod{p}$
implies that $\det M\equiv 1\bmod{p}$.  For $p\geq 3$ these together imply that $\det M=1$, and so
$\GL_n(\Z,p)=\SL_n(\Z,p)$. However for $p=2$ we have an extension
\[1\to \SL_n(\Z,2)\to \GL_n(\Z,2)\overset{\det}{\longrightarrow} \{\pm 1\}\to 1.\]
Let $\mathcal{N}_1\in \GL_n(\Z)$ be the matrix obtained from the identity matrix by replacing the $1$ at position
$(1,1)$ with a $-1$.  Then $\mathcal{N}_1\in \GL_n(\Z,2)$ has $\det \mathcal{N}_1=-1$, and
$\GL_n(\Z,2)$ is generated by $\SL_n(\Z,2)$ together with $\mathcal{N}_1$.

\begin{proof}[Proof of Proposition~\ref{proposition:autgenlevel}]
Let $\Gamma \subset \Aut(F_n,p)$ be the subgroup generated by the purported generators $\Aut(F_n,p,I)$ with $\abs{I}\leq 3$;
our goal is to prove that $\Gamma=\Aut(F_n,p)$.  The map $\pi\colon\Aut(F_n)\to \GL_n(\Z)$ is known to
be surjective, so we have a short exact sequence
\[1 \longrightarrow \IA_n \longrightarrow \Aut(F_n,p) \overset{\pi}{\longrightarrow}\GL_n(\Z,p) \longrightarrow 1.\]
Since $\IA_n(I)\subset \Aut(F_n,p,I)$, Proposition~\ref{proposition:autgentorelli} implies that $\IA_n\subset \Gamma$. It is therefore enough to show that $\pi(\Gamma)$ is all of $\GL_n(\Z,p)$.

Define automorphisms $\widetilde{\mathcal{E}}^n_{ij}(p)\in \Aut(F_n,p)$ for $1 \leq i,j \leq n$ with $i \neq j$, automorphisms
$\widetilde{\mathcal{B}}_i^n(p)\in \Aut(F_n,p)$ for $1\leq i<n$, and the automorphism
$\widetilde{\mathcal{N}}_1\in\Aut(F_n,2)$ via the following formulas.
\begin{align*}
\widetilde{\mathcal{E}}^n_{ij}(p)(x_{\ell}) &= \begin{cases}
x_j x_i^p & \text{if $\ell = j$},\\
x_{\ell}  & \text{otherwise},\end{cases}\\
\widetilde{\mathcal{B}}_i^n(p)(x_{\ell}) &= \begin{cases}
x_i(x_i x_{i+1}^{-1})^p & \text{if $\ell = i$},\\
x_{i+1}(x_i x_{i+1}^{-1})^p & \text{if $\ell = i+1$},\\
x_{\ell} & \text{otherwise}, \end{cases}\\
\widetilde{\mathcal{N}}_1(x_{\ell}) &= \begin{cases}
x_1^{-1} & \text{if $\ell = 1$},\\
x_{\ell} & \text{otherwise}.\end{cases}
\end{align*}
The automorphism $\widetilde{\mathcal{E}}^n_{ij}(p)$ is supported on the splitting $\langle x_i,x_j\rangle\ast\langle x_\ell\,|\,\ell\neq i,j\rangle$,
so $\widetilde{\mathcal{E}}^n_{ij}(p)\in \Aut(F_n,p,{\{i,j\}})$.  Similarly,
$\widetilde{\mathcal{B}}^n_{i}(p)\in \Aut(F_n,p,{\{i,i+1\}})$ and $\widetilde{\mathcal{N}}_1\in \Aut(F_n,2,{\{1\}})$.
These elements are therefore contained in $\Gamma$.  Direct computations show
that $\pi(\widetilde{\mathcal{E}}^n_{ij}(p))=\mathcal{E}^n_{ij}(p)$ and
$\pi(\widetilde{\mathcal{B}}_i^n(p))=\mathcal{B}_i^n(p)$ and
$\pi(\widetilde{\mathcal{N}}_1) = \mathcal{N}_1$. By Theorem~\ref{theorem:sllevelgen}
these elements generate $\GL_n(\Z,p)$, so we conclude that $\pi(\Gamma)=\GL_n(\Z,p)$, as desired.
\end{proof}

\subsection{Generators for \texorpdfstring{$\Mod_g^1(p)$}{Mod(p)}}
Recall from \S\ref{section:modgen} that $\Sigma_g^1=S_{[g]}$, so for any $I\subset \{1,\ldots,g\}$ we have a  subsurface $S_I$ of $\Sigma_g^1$. For any subsurface $S$ of $\Sigma_g^1$,
we denote by $\Mod_g^1(p,S)$ the subgroup $\Mod_g^1(p,S)\coloneq \Mod_g^1(p)\cap \Mod(S)$ consisting of mapping classes supported on $S$.

\begin{proposition}[{Level-$p$ generators}]
\label{proposition:modgenlevel}
For $g \geq 3$ and $p \geq 2$, the group $\Mod_g^1(p)$ is generated by the set
\[\Set{$\Mod_g^1(p,S_I)$}{$I \subset \{1,\ldots,g\}$ satisfies $\abs{I} = 3$}.\]
\end{proposition}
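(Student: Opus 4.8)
The plan is to mimic the proof of Proposition~\ref{proposition:autgenlevel}, with a congruence subgroup of the symplectic group playing the role that $\SL_n(\Z,p)$ and $\GL_n(\Z,p)$ played there. Let $\pi\colon \Mod_g^1\onto \Sp_{2g}(\Z)$ be the action on $\HH_1(\Sigma_g^1;\Z)$, which preserves $\ialg$ and is classically known to be surjective, and write $\Sp_{2g}(\Z,p)\coloneq\ker(\Sp_{2g}(\Z)\to\Sp_{2g}(\Z/p))$. By definition $\Mod_g^1(p)=\pi^{-1}(\Sp_{2g}(\Z,p))$, and since $\ker\pi=\Torelli_g^1$ this yields a short exact sequence
\[1\longrightarrow \Torelli_g^1\longrightarrow \Mod_g^1(p)\overset{\pi}{\longrightarrow}\Sp_{2g}(\Z,p)\longrightarrow 1.\]
Let $\Gamma\subset \Mod_g^1(p)$ be the subgroup generated by the $\Mod_g^1(p,S_I)$ with $\abs{I}\leq 3$; we must show $\Gamma=\Mod_g^1(p)$. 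Because $\Torelli_g^1(S_I)\subset \Mod_g^1(p,S_I)$, Proposition~\ref{proposition:modgentorelli} gives $\Torelli_g^1\subset \Gamma$, so by the exact sequence it suffices to prove $\pi(\Gamma)=\Sp_{2g}(\Z,p)$.

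The next step is to compute $\pi(\Gamma)$. For $I\subset\{1,\ldots,g\}$ with $\abs{I}\leq 3$, the subsurface $S_I$ has genus $\abs{I}$ and $\HH_1(S_I;\Z)=\langle a_i,b_i\mid i\in I\rangle$ is a symplectic summand whose orthogonal complement is $\langle a_j,b_j\mid j\notin I\rangle$. Since $\Mod(S_I)\cong\Mod_{\abs{I}}^1$ surjects onto $\Sp_{2\abs{I}}(\Z)$, the image $\pi(\Mod_g^1(S_I))$ is the ``standard block'' subgroup $\Sp_{2\abs{I}}(\Z)^{(I)}\subset\Sp_{2g}(\Z)$ of those matrices that fix $a_j,b_j$ for all $j\notin I$; and since any matrix of $\Sp_{2\abs{I}}(\Z,p)^{(I)}$ lifts to some $\phi\in\Mod(S_I)$, which then automatically lies in $\Mod_g^1(p)\cap\Mod_g^1(S_I)=\Mod_g^1(p,S_I)$, we in fact get $\pi(\Mod_g^1(p,S_I))=\Sp_{2\abs{I}}(\Z,p)^{(I)}$. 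Hence $\pi(\Gamma)=\big\langle\, \Sp_6(\Z,p)^{(I)}\ \big|\ \abs{I}=3\,\big\rangle$, and the proposition reduces to the purely group-theoretic claim that the ``block'' copies of $\Sp_6(\Z,p)$, indexed by triples $I\subset\{1,\ldots,g\}$, generate $\Sp_{2g}(\Z,p)$.

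To establish this I would use a generating set for $\Sp_{2g}(\Z,p)$ by elements of bounded support, namely a symplectic analogue of Theorem~\ref{theorem:sllevelgen}: for $g\geq 3$, the group $\Sp_{2g}(\Z,p)$ is generated by $p$-th powers of elementary symplectic (root) matrices together with finitely many ``$\mathcal{B}$-type'' correction matrices, each of which fixes $a_j,b_j$ for all but at most three indices $j$. Each such generator therefore lies in some $\Sp_6(\Z,p)^{(I)}$ with $\abs{I}=3$, so $\big\langle \Sp_6(\Z,p)^{(I)}\big\rangle$ contains a generating set of $\Sp_{2g}(\Z,p)$ and thus equals it, which completes the proof. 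A way to produce such a generating set that avoids a dedicated symplectic reference is to realize $\Sp_{2g}$ via its two opposite Siegel parabolic subgroups: their common Levi factor is $\GL_g$, handled by Theorem~\ref{theorem:sllevelgen} (whose generators involve at most two hyperbolic summands under the block embedding $\GL_g\into\Sp_{2g}$), while the unipotent radicals contribute the matrices $b_i\mapsto b_i+pa_i$ and $b_i\mapsto b_i+pa_j,\ b_j\mapsto b_j+pa_i$, supported on at most two summands; one then needs the corresponding \emph{relative} statement that these pieces generate the congruence subgroup $\Sp_{2g}(\Z,p)$.

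The main obstacle is exactly this last point: producing and justifying a congruence analogue of elementary generation for $\Sp_{2g}(\Z)$, and verifying that its generators are supported on a bounded number of hyperbolic summands, with the symplectic ``$\mathcal{B}$-type'' generators being the delicate part, just as the matrices $\mathcal{B}^n_i(p)$ are the delicate part of Theorem~\ref{theorem:sllevelgen}. The remaining ingredients are routine: the reduction through the exact sequence is formal, the Torelli part is Proposition~\ref{proposition:modgentorelli}, and the identity $\pi(\Mod_g^1(p,S_I))=\Sp_{2\abs{I}}(\Z,p)^{(I)}$ is immediate from surjectivity of $\Mod_{\abs{I}}^1\onto\Sp_{2\abs{I}}(\Z)$. (If desired, the bookkeeping of which summands each generator touches can be organized using the $\Sym_g$-symmetry furnished by Lemma~\ref{lemma:allsame}, but this is unnecessary here since the generators already involve only boundedly many indices.)
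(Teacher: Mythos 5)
Your reduction is exactly the paper's: the exact sequence $1 \to \Torelli_g^1 \to \Mod_g^1(p) \xrightarrow{\pi} \Sp_{2g}(\Z,p) \to 1$, the inclusion $\Torelli_g^1 \subset \Gamma$ via Proposition~\ref{proposition:modgentorelli}, and the observation that $\pi(\Mod_g^1(p,S_I))$ is the full block congruence subgroup are all correct. But the remaining claim --- that $\Sp_{2g}(\Z,p)$ is generated by congruence elements supported on boundedly many hyperbolic summands --- is the actual content of the proposition, and you have not proved it; you explicitly defer it as ``the main obstacle.'' In the paper this is a separate result (Lemma~\ref{lemma:splevelgen}), proved by combining Newman--Smart's computation of the image of the map $\rho\colon \One_{2g}+pA \mapsto A \bmod p$, which is surjective onto $\SpLie_{2g}(\Z/p)$, with Tits's theorem that the listed elements already generate $\Sp_{2g}(\Z,p^2)=\ker(\rho)$.

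Worse, the specific candidate generating set you sketch would fail for odd $p$. Since $\rho\colon \Sp_{2g}(\Z,p) \onto \SpLie_{2g}(\Z/p)$ is a homomorphism onto an abelian group, any proposed generating set must have $\rho$-images spanning all of $\SpLie_{2g}(\Z/p)$, whose elements are $\left(\begin{smallmatrix} A & B \\ C & -A^{\top}\end{smallmatrix}\right)$ with $B,C$ symmetric. The $p$-th powers of the Siegel unipotents ($b_i \mapsto b_i + p a_j$, etc.) have $\rho$-image with $A$-block zero, and for odd $p$ the block-embedded Levi congruence subgroup $\GL_g(\Z,p)=\SL_g(\Z,p)$ has $\rho$-image with $A$-block of trace zero (since $\det(\One_g+pA')=1$ forces $\operatorname{tr}A'\equiv 0 \bmod p$). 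So everything you propose lands in the proper subgroup $\{\operatorname{tr}(A)=0\}$ of index $p$, and cannot generate $\Sp_{2g}(\Z,p)$. This trace obstruction is precisely why the paper's generating set includes the extra element $\mathcal{U}^g_1(p)$, supported on a single handle, whose $\rho$-image has $A$-block of nonzero trace (and why the Bass--Milnor--Serre generators $\mathcal{X},\mathcal{Y}$ only \emph{normally} generate). To repair your argument you would need to add such a generator and then still prove generation, e.g.\ by the Newman--Smart plus Tits argument the paper uses; the Siegel-parabolic heuristic alone does not supply this.
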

The level-$p$ congruence
subgroup $\Sp_{2g}(\Z,p)$  is the kernel of the natural map
$\Sp_{2g}(\Z) \rightarrow \Sp_{2g}(\Z/p)$.
To prove Proposition~\ref{proposition:modgenlevel}, we will need a generating set for $\Sp_{2g}(\Z,p)$ analogous to Theorem~\ref{theorem:sllevelgen}.

For $g \geq 1$ let $\One_g$ and $\Zero_g$ be the $g \times g$ identity matrix and zero matrix, respectively.  Recall from \S\ref{section:autgen} that for any $r\in \Z$, we defined
$\epsilon^g_{ij}(r)$ as the $g \times g$ matrix with $(i,j)$ entry equal to $r$ and  zero otherwise, and $\beta_i^g(r)$ as the $g\times g$ matrix with $(i,i)$ and $(i,i+1)$ entries equal to $r$, with $(i+1,i)$ and $(i+1,i+1)$ entries equal to $-r$, and zero otherwise.
We define $s\epsilon^g_{ij}(r)$ to be the $g \times g$ matrix with $(i,j)$ and $(j,i)$ entries equal to $r$ and zero otherwise; when $i\neq j$ this is just $\epsilon^g_{ij}(r)+\epsilon^g_{ji}(r)$, while when $i=j$ we have $s\epsilon^g_{ii}(r)=\epsilon^g_{ii}(r)$.

We can now describe our generating set.  First, for $1 \leq i\leq j \leq g$, define:
\[\mathcal{X}^g_{ij}(r) \coloneq \One_{2g}+ \left(\begin{matrix} \Zero_g & \Zero_g \\ s\epsilon^g_{ij}(r)& \Zero_g \end{matrix}\right)
, \qquad
\mathcal{Y}^g_{ij}(r) \coloneq \One_{2g}+ \left(\begin{matrix} \Zero_g & s\epsilon^g_{ij}(r) \\ \Zero_g & \Zero_g \end{matrix}\right)\]
Second, for $1 \leq i,j \leq g$ with $i \neq j$, define:
\[\mathcal{Z}^g_{ij}(r) \coloneq \One_{2g}+\left(\begin{matrix}\epsilon^g_{ij}(r) & \Zero_g \\ \Zero_g & - \epsilon^g_{ji}(r) \end{matrix}\right)\]
Third, for $1 \leq i < g$, define:
\[\mathcal{W}^g_{i}(r) \coloneq \One_{2g}+ \left(\begin{matrix} \beta^g_{i}(r) & \Zero^g\\ \Zero_g & -\beta^g_{i}(r)^\top \end{matrix}\right)\]
Finally, define:
\[\mathcal{U}^g_{1}(r) \coloneq \One_{2g}+ \left(\begin{matrix} \epsilon^g_{11}(r) & \epsilon^g_{11}(r) \\ -\epsilon^g_{11}(r) & -\epsilon^g_{11}(r) \end{matrix}\right)\]

\begin{lemma}
\label{lemma:splevelgen}
For $g \geq 2$ and $p \geq 2$ the congruence group $\Sp_{2g}(\Z,p)$ is generated by the set
\[
\{\mathcal{X}^g_{ij}(p), \mathcal{Y}^g_{ij}(p)|1 \leq i\leq j \leq g\}
\cup
\{\mathcal{Z}^g_{ij}(p)|1 \leq i,j \leq g,\ i \neq j\}
\cup
\{\mathcal{W}^g_i(p)|1 \leq i < g\}
\cup
\{\mathcal{U}^g_1(p)\}
\]
\end{lemma}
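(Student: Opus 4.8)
The plan is to deduce the lemma from the congruence subgroup property for $\Sp_{2g}(\Z)$ together with a Sury--Venkataramana-style analysis of conjugates of elementary matrices. Recall that for $g\geq 2$ the group $\Sp_{2g}(\Z)$ has the congruence subgroup property (Bass--Milnor--Serre, Mennicke); concretely, the level-$p$ congruence subgroup $\Sp_{2g}(\Z,p)$ coincides with the \emph{elementary} congruence subgroup, i.e.\ the smallest normal subgroup of $\Sp_{2g}(\Z)$ containing all of the level-$p$ elementary symplectic matrices $\mathcal{X}^g_{ij}(p)$, $\mathcal{Y}^g_{ij}(p)$, $\mathcal{Z}^g_{ij}(p)$. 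Let $\Gamma$ be the subgroup generated by the set $S$ in the statement. Every element of $S$ is congruent to $\One_{2g}$ modulo $p$, so $\Gamma\subseteq \Sp_{2g}(\Z,p)$; and since $s\epsilon^g_{ij}(p)=s\epsilon^g_{ji}(p)$, the group $\Gamma$ in fact contains \emph{every} level-$p$ elementary symplectic matrix. Thus it suffices to prove that $\Gamma$ is normalized by $\Sp_{2g}(\Z)$: then $\Gamma$ contains the elementary congruence subgroup of level $p$, which is all of $\Sp_{2g}(\Z,p)$.

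Since $\Sp_{2g}(\Z)$ is generated by the level-$1$ elementary symplectic matrices $\mathcal{X}^g_{ij}(1)$, $\mathcal{Y}^g_{ij}(1)$, $\mathcal{Z}^g_{ij}(1)$, proving normality of $\Gamma$ reduces to checking that the conjugate of each generator in $S$ by each such level-$1$ matrix (and its inverse) lies in $\Gamma$. For this I would use the Chevalley commutator relations for the root system $C_g$, under which $\mathcal{X}^g_{ij}$, $\mathcal{Y}^g_{ij}$ ($i<j$), $\mathcal{Z}^g_{ij}$ are the root subgroups for the short roots $-e_i-e_j$, $e_i+e_j$, $e_i-e_j$, and $\mathcal{X}^g_{ii}$, $\mathcal{Y}^g_{ii}$ those for the long roots $-2e_i$, $2e_i$. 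Conjugating a level-$p$ root element $X_\alpha(p)$ by a level-$1$ root element $X_\beta(1)$ with $\beta\neq -\alpha$ produces, by Chevalley's formula, $X_\alpha(p)$ times a product of elements $X_{i\beta+j\alpha}(c\,p^{j})$ with $i,j\geq 1$; each such factor is an elementary symplectic matrix with parameter divisible by $p$ (this includes the quadratic long-root terms coming from $[X_{e_i-e_j}(1),X_{2e_j}(p)]$), hence lies in $\Gamma$. The conjugates of $\mathcal{W}^g_i(p)$ and $\mathcal{U}^g_1(p)$ by level-$1$ elementaries are handled similarly, expanding these matrices into products of elementary symplectic matrices and repeatedly applying the commutator relations.

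The remaining, and genuinely hard, case is the \emph{opposite-root} conjugation: $X_{-\alpha}(1)X_\alpha(p)X_{-\alpha}(1)^{-1}$, where the naive commutator involves a torus element and cannot be rewritten using level-$p$ elementaries alone. This is exactly the obstruction that Sury--Venkataramana surmount for $\SL_n$ by adjoining the matrices $\mathcal{B}^n_i(p)=\mathcal{E}^n_{i+1,i}(-1)\mathcal{E}^n_{i,i+1}(p)\mathcal{E}^n_{i+1,i}(1)$, and the generators $\mathcal{W}^g_i(p)$ and $\mathcal{U}^g_1(p)$ supply the analogous ``seeds'' here. Indeed, $\mathcal{W}^g_i(p)$ lies in the block-diagonal copy of $\SL_g$ inside $\Sp_{2g}$ (under which $\mathcal{Z}^g_{ij}\leftrightarrow \mathcal{E}^g_{ij}$ and $\mathcal{W}^g_i\leftrightarrow \mathcal{B}^g_i$), so for $g\geq 3$ one may invoke the Sury--Venkataramana theorem directly to see that $\Gamma$ contains the image of $\SL_g(\Z,p)$, in particular all opposite-root conjugates of the short roots $\pm(e_i-e_j)$. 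Meanwhile $\mathcal{U}^g_1(p)$ is the conjugate of the long-root matrix $\mathcal{Y}^g_{11}(p)$ by the opposite long-root matrix $\mathcal{X}^g_{11}(1)$ (it is the $2\times 2$ matrix $\mathcal{B}^2_1(p)$ sitting in the symplectic block on coordinates $\{1,g{+}1\}$), and it serves as the seed for the long-root directions $\pm 2e_i$, which have no $\SL_n$ analogue. Starting from these and using the Steinberg relations to propagate opposite-root identities from one root to another, one covers the short roots $e_i+e_j$ and the long roots $2e_i$ as well. The case $g=2$, where no $\SL_n$-input with $n\geq 3$ is available, must be treated by hand directly from the $C_2$ relations.

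The main obstacle is precisely this opposite-root bookkeeping: verifying that the specific finite family of seeds $\{\mathcal{W}^g_i(p)\}_{1\leq i<g}$ together with $\mathcal{U}^g_1(p)$ is enough to close the induction for \emph{every} root of $C_g$, and in particular that no further ``long-root'' or ``mixed short-root'' seeds are needed. Everything else — the non-opposite conjugations and the reduction via the congruence subgroup property — is routine.
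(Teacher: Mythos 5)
Your overall strategy (reduce to the Bass--Milnor--Serre normal-closure description of $\Sp_{2g}(\Z,p)$, then show the group $\Gamma$ generated by the listed matrices is normalized by $\Sp_{2g}(\Z)$ via Chevalley commutator calculus, with $\mathcal{W}^g_i(p)$ and $\mathcal{U}^g_1(p)$ as Sury--Venkataramana-style ``seeds'' for the opposite-root conjugations) is not unreasonable, but as written it is a plan rather than a proof, and the missing part is exactly the mathematical content of the lemma. The non-opposite-root conjugations are indeed routine, but everything else is deferred: you do not verify that the conjugates of $\mathcal{W}^g_i(p)$ and $\mathcal{U}^g_1(p)$ by level-$1$ elementaries stay in $\Gamma$ (these are not root elements, so the Chevalley formula does not apply directly and the expansions involve level-$1$ factors), you do not carry out the propagation of opposite-root identities to the long roots $\pm 2e_i$ and the mixed short roots $e_i+e_j$ from the single seed $\mathcal{U}^g_1(p)$, and you explicitly leave the case $g=2$ ``to be treated by hand.'' You yourself identify this bookkeeping as ``the main obstacle''; since the generating set is in fact minimal (it has exactly $2g^2+g$ elements, the rank of $\SpLie_{2g}(\Z/p)$), there is no slack, and whether these particular seeds close the induction is precisely what must be proved. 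So the proposal has a genuine gap at its central step.

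For comparison, the paper's proof avoids all root-by-root analysis. It uses the homomorphism $\rho\colon \Sp_{2g}(\Z,p)\to \GLLie_{2g}(\Z/p)$ sending $\One_{2g}+pA$ to $A \bmod p$, whose image is $\SpLie_{2g}(\Z/p)$ by Newman--Smart; one checks directly that the listed matrices map to a basis of $\SpLie_{2g}(\Z/p)$, so $\rho(\Gamma)=\rho(\Sp_{2g}(\Z,p))$. The kernel of $\rho$ is $\Sp_{2g}(\Z,p^2)$, and a theorem of Tits states that $\Sp_{2g}(\Z,p^2)$ is already contained in the subgroup generated by the level-$p$ elementary symplectic matrices $\mathcal{X}^g_{ij}(p),\mathcal{Y}^g_{ij}(p),\mathcal{Z}^g_{ij}(p)$, hence in $\Gamma$; together these give $\Gamma=\Sp_{2g}(\Z,p)$. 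In effect, the hard unipotent/commutator work you would need to redo is outsourced to Tits's theorem, and the only computation left is a linear-algebra check in the Lie algebra mod $p$. If you want to salvage your approach, you would either have to carry out the full $C_g$ opposite-root induction (including $g=2$) in detail, or restructure the argument along these quotient-plus-Tits lines.
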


\begin{proof}[{Proof of Lemma~\ref{lemma:splevelgen}}]
Let $\Gamma \subset \Sp_{2g}(\Z,p)$ be the subgroup generated by the purported generating set. Let $\SpLie_{2g}(\Z/p)$ and $\GLLie_{2g}(\Z/p)$ be the symplectic Lie algebra and matrix Lie algebra
over $\Z/p$, considered as abelian groups.

Let $\rho\colon \Sp_{2g}(\Z,p)\to \GLLie_{2g}(\Z/p)$ be the map sending  $\One_{2g}+pA\in \Sp_{2g}(\Z,p)$ to the mod-$p$ reduction of $A$ in $\GLLie_{2g}(\Z/p)$. It was first proved by Newman--Smart~\cite[Theorem 7]{NewmanSmartSp} that the image $\rho(\Sp_{2g}(\Z,p))$ is precisely the subgroup $\SpLie_{2g}(\Z/p)\subset \GLLie_{2g}(\Z/p)$.

It is
easy to see that $\rho$ maps our purported generating set to a basis for $\SpLie_{2g}(\Z/p)$ (the generators $\mathcal{W}^g_i(p)$ are needed
to get matrices whose diagonal does not vanish, and $\mathcal{U}_g^1(p)$ is needed to get matrices whose trace is not zero in each block). Therefore $\rho(\Gamma)=\SpLie_{2g}(\Z/p)=\rho(\Sp_{2g}(\Z,p))$.

It remains to show that $\ker(\rho)\subset\Gamma$. But the kernel of $\rho$ is $\Sp_{2g}(\Z,p^2)$ by definition, and Tits~\cite[Proposition 4]{TitsCongruence} proved
that $\Sp_{2g}(\Z,p^2) \subset \Gamma$ (the generator $\mathcal{U}_1^g(p)$ is not necessary here). Therefore $\Gamma=\Sp_{2g}(\Z,p)$, as desired.
\end{proof}

\begin{remark}
Bass--Milnor--Serre~\cite[Theorem 12.4]{BassMilnorSerre} proved that $\Sp_{2g}(\Z,p)$ is the normal closure in $\Sp_{2g}(\Z)$ of
\[\Set{$\mathcal{X}^g_{ij}(p), \mathcal{Y}^g_{ij}(p)$}{$1 \leq i\leq j \leq g$}\]
for $g \geq 2$ and $p \geq 2$.  However, one can show that these $g^2+g$ generators do not suffice to generate $\Sp_{2g}(\Z,p)$. Indeed, we saw above that $\Sp_{2g}(\Z,p)$ surjects to $\SpLie_{2g}(\Z/p)$, an elementary abelian group of rank $2g^2+g$, so $\Sp_{2g}(\Z,p)$ cannot be generated by fewer than $2g^2+g$ elements. Since the generating set in Lemma~\ref{lemma:splevelgen} consists of exactly $2g^2+g$ elements, it is in fact a \emph{minimal} generating set.
\end{remark}

\begin{proof}[Proof of Proposition~\ref{proposition:modgenlevel}]
Let $\Gamma \subset \Mod_g^1(p)$ be the subgroup generated by the subgroups $\Mod_g^1(p,S_I)$ with $\abs{I}=3$, or equivalently with $\abs{I}\leq 3$. We have a short exact sequence
\[1 \longrightarrow \Torelli_g^1 \longrightarrow \Mod_g^1(p) \xrightarrow{\pi} \Sp_{2g}(\Z,p) \longrightarrow 1.\]
Consider the image $\pi(\Gamma)\subset \Sp_{2g}(\Z,p)$. By examination we see that the generators $\mathcal{X}^g_{ij}(p)$,
$\mathcal{Y}^g_{ij}(p)$, and $\mathcal{Z}^g_{ij}(p)$ are in the image of $\Mod_g^1(p,S_{\{i,j\}})$,
the generator
$\mathcal{W}^g_i(p)$ is in the image of $\Mod_g^1(p,S_{\{i,i+1\}})$, and the generator $\mathcal{U}^g_1(p)$ is in the image of $\Mod_g^1(p,S_{\{1\}})$. By Lemma~\ref{lemma:splevelgen}, this shows that $\pi(\Gamma)=\Sp_{2g}(\Z,p)$.
Since $\Torelli_g^1(I)\subset \Mod_g^1(p,I)$, Proposition~\ref{proposition:modgentorelli} implies  that $\Torelli_g^1\subset\Gamma$. 
We conclude that $\Gamma=\Mod_g^1(p)$, as desired.
\end{proof}

\section{Lower bounds on generators}
\label{section:bounds}

Our goal now is to prove Theorems~\ref{maintheorem:modnongentorelli} and
\ref{maintheorem:autnongentorelli}.  We begin by recalling some facts about the
higher Johnson homomorphisms.  See Satoh~\cite{SatohSurvey}
for more details.


\paragraph{Automorphism groups of free groups.}
Fix $n\geq 1$, and let $H\coloneq F_n^{\ab}$.  Since $\gamma_k(F_n)$ is a central filtration, the graded quotients $\gr_k(F_n)\coloneq \gamma_k(F_n)/\gamma_{k+1}(F_n)$ form a graded Lie algebra $\gr(F_n)$ under the commutator bracket. Witt~\cite{Witt} proved that $\gr(F_n)$ is naturally isomorphic to the free Lie algebra $\Lie(H)$ on $H=\gr_1(F_n)$.

Similarly, from the central filtration $\IA_n(k)$ we obtain a graded Lie algebra $\gr(\IA_n)$ with $\gr_k(\IA_n)\coloneq \IA_n(k)/\IA_n(k+1)$. The action of $\IA_n$ on $F_n$ induces an injective map of Lie algebras $\tau\colon \gr(\IA_n)\into \Der(\gr(F_n)) \cong \Der(\Lie(H))\cong\Hom(H,\Lie(H))$. Traditionally one thinks of the $k^{\text{th}}$ graded piece of $\tau$ as a homomorphism $\tau_k\colon \IA_n(k)\to \Hom(H,\Lie_{k+1}(H))$ with $\ker(\tau_k)=\IA_n(k+1)$; the map $\tau_k$ is known as the $k^{\text{th}}$ Johnson homomorphism. Explicitly, given $\varphi\in \IA_n(k)$ and $x\in F_n$ we have $\varphi(x)x^{-1}\in \gamma_{k+1}(F_n)$, and $\tau_k(\varphi)\in \Hom(H,\Lie_{k+1}(H))$ is the map that takes $[x]\in H$ to $[\varphi(x)x^{-1}]\in \gr_{k+1}(F_n)\cong\Lie_{k+1}(H)$.  Determining the image of $\tau$ is a fundamental and difficult problem which has a large literature (see
\cite{SatohSurvey} for a discussion; we especially would like to point out the papers Satoh~\cite{SatohLCS} and Enomoto--Satoh~\cite{EnomotoSatohDer}).  

The universal enveloping algebra of $\Lie(H)$ is the tensor algebra $T(H)$, that is, the free associative algebra on $H$. Since $\Lie(H)$ is a free $\Z$-module, the natural map $i\colon \Lie(H)\into T(H)$ to its universal enveloping algebra $T(H)$ is injective by the Poincar\'{e}--Birkhoff--Witt theorem (\cite[Theorem I.2.7.1]{BourbakiLie13}; see especially 
\cite[Corollary I.2.7.2]{BourbakiLie13}).

\begin{proof}[Proof of Theorem~\ref{maintheorem:autnongentorelli}]
Fix $k\geq 1$, and let $\rho\colon \Lie_{k+1}(H)\to H\otimes \bwedge^k H$ be the composition \[\rho\colon \Lie_{k+1}(H)\into H^{\otimes k+1}\onto H\otimes \bwedge^k H\] of the injection $i\colon \Lie_{k+1}(H)\into H^{\otimes k+1}$ with the natural projection. Denote by $\widehat{\tau}_k\colon \IA_n(k)\to \Hom(H,H\otimes \bwedge^k H)$ the composition
\[\widehat{\tau}_k\colon\IA_n(k)\xrightarrow{\tau_k}\Hom(H,\Lie_{k+1}(H))\rightarrow\Hom(H,H\otimes \bwedge^k H),\]
where the second map is induced by $\rho$.

Consider an automorphism $\varphi\in \IA_n(k)$ supported on the splitting $F_n=A\ast B$, and let $H_A\coloneq A^{\ab}\subset H$. From the explicit description of $\tau_k(\varphi)$ above, it is easy to see that $\tau_k(\varphi)$ lies in the subspace $\Hom(H_A,\Lie_{k+1}(H_A))\subset \Hom(H,\Lie_{k+1}(H))$. From the naturality of the Poincar\'{e}--Birkhoff--Witt injection, $\widehat{\tau}_k(\varphi)$ lies in $\Hom(H_A,H_A\otimes \bwedge^k H_A)$. If the splitting $F_n=A\ast B$ has rank $r<k$, then since $H_A\cong \Z^r$ we have $\bwedge^k H_A=0$, so $\widehat{\tau}_k(\varphi)=0$. This shows that any automorphism $\varphi\in \IA_n(k)$ supported on a splitting of rank less than $k$ has $\widehat{\tau}_k(\varphi)=0$.

To complete the proof of Theorem~\ref{maintheorem:autnongentorelli}, it thus suffices to show that $\widehat{\tau}_k(\IA_n(k))\neq 0$ when $n>k$. Since $F_n$ is centerless, conjugation gives an injection $\InnerAut\colon F_n\into \IA_n$. This corresponds under $\tau$ to the injection $\InnerDer\colon \Lie(H)\into \Der(\Lie(H))$:
\[\xymatrix{
F_n\ar[d]_{\InnerAut} & \gr(F_n) \ar^{\cong}[r] \ar[d]& \Lie(H) \ar[d]^{\InnerDer}\\
\IA_n & \gr(\IA_n) \ar[r]_{\tau} & \Der(\Lie(H))
}\]
Explicitly, the inner derivation corresponding to an element $\lambda\in \Lie_k(H)$ is the map $\eta_\lambda\in \Hom(H,\Lie_{k+1}(H))$ defined by $\eta_\lambda(h)=[\lambda,h]$ for $h\in H$.

Let $\{a_1,\ldots,a_n\}$ be a free basis for $H$, and set 
\[\lambda\coloneq [[\cdots[[a_1,a_2],a_3],\cdots],a_k]\in \Lie_k(H).\]
The commutativity of the  diagram above implies that all inner derivations lie in the image of $\tau$, so there exists some $\varphi\in \IA_n(k)$ with $\tau_k(\varphi)=\eta_\lambda$. It thus suffices to show that $\rho\circ \eta_\lambda\neq 0$; we do this by verifying that the element \[\widehat{\tau}_k(\varphi)(a_{k+1})=\rho(\eta_\lambda(a_{k+1}))=\rho([\lambda,a_{k+1}])\] is nonzero.

The image $i([\lambda,a_{k+1}])\in H^{\otimes k+1}$ is an alternating sum of $2^{k+1}$ monomials, each of the form $a_{\sigma(1)}\otimes \cdots\otimes a_{\sigma(k+1)}$ for some  permutation $\sigma\in \Sym_{k+1}$. However, by induction on $k$ we can see that the only such permutation $\sigma$ with $\sigma(1)=1$ is the identity $\id\in \Sym_{k+1}$. Accordingly, let $a_1^*\colon H\to \Z$ be the dual functional, and $(a_1^*\otimes \id)\colon H^{\otimes k+1}\to H^{\otimes k}$ be the map that applies this functional to the first factor. We then have $(a_1^*\otimes \id)\circ i([\lambda,a_{k+1}])=a_2\otimes \cdots\otimes a_{k+1}\in H^{\otimes k}$. This projects to $a_2\wedge \cdots\wedge a_{k+1}\in \bwedge^k H$ under the natural projection, so $(a_1^*\otimes \id)\circ \rho([\lambda,a_{k+1}])=a_2\wedge \cdots\wedge a_{k+1}\neq 0$. This shows that $\rho([\lambda,a_{k+1}])\neq 0\in H\otimes \bwedge^k H$, so $\rho\circ \eta_\lambda=\widehat{\tau}_k(\varphi)\neq 0$. This demonstrates that $\widehat{\tau}_k(\IA_n(k))\neq 0$ when $n>k$, and thus completes the proof of the theorem.
\end{proof}

\paragraph{Mapping class groups.}
We now turn to Theorem~\ref{maintheorem:modnongentorelli}, which requires introducing the higher Johnson
homomorphisms for the mapping class group.  Fix $g \geq 1$, and set $n=2g$.
Choosing an isomorphism $\pi_1(\Sigma_g^1,\ast)\cong F_{2g}$, we obtain an embedding of $\Torelli_g^1$ into $\IA_n$. The central filtration $\Torelli_g^1(k)$ is taken to the central filtration $\IA_n(k)$, so we obtain an embedding $\gr(\Torelli_g^1)\into \gr(\IA_n)$ of graded Lie algebras. Setting $H\coloneq F_{2g}^{\ab}\cong H_1(\Sigma_g^1;\Z)$, we obtain from this embedding the $k^{\text{th}}$ Johnson homomorphism $\tau_k\colon \Torelli_g^1(k)\to \Hom(H,\Lie_{k+1}(H))$.

\begin{proof}[Proof of Theorem~\ref{maintheorem:modnongentorelli}]
Just like
for $\IA_n$, we define $\widehat{\tau}_k\colon \Torelli_g^1(k)\to \Hom(H,H\otimes \bwedge^k H)$ via the formula $\widehat{\tau}_k(\varphi) = \rho \circ \tau_k(\varphi)$, where $\rho\colon \Lie_{k+1}(H)\to H\otimes \bwedge^k H$ is the same map as before.

Consider a subsurface $S\subset \Sigma_g^1$ such that $S\cong \Sigma_h^1$. Choose a disjoint subsurface 
$T\subset \Sigma_g^1$ with $T\cong \Sigma_{g-h}^1$. Fix a basepoint $\ast_S\in \partial S$ and an arc connecting $\ast_S$ 
to the basepoint $\ast\in \partial\Sigma_g^1$, and similarly for $\ast_T\in \partial T$. In the usual way, this determines 
inclusions $\pi_1(S,\ast_S) \into \pi_1(\Sigma_g^1,\ast)$ and $\pi_1(T,\ast_T)\into \pi_1(\Sigma_g^1,\ast)$.  By van 
Kampen's theorem, we have a splitting $F_{2g}\cong \pi_1(\Sigma_g^1,\ast)=\pi_1(S,\ast_S) \ast \pi_1(T,\ast_T)$.

If $\varphi\in \Torelli_g^1(k)$ is supported on the subsurface $S$, the induced automorphism of $F_{2g}$ preserves this 
splitting, which is of rank $2h$. If $2h<k$, our computation in the proof of Theorem~\ref{maintheorem:autnongentorelli} 
thus shows that $\widehat{\tau}_k(\varphi)=0$. Therefore $\widehat{\tau}_k$ vanishes on any element of $\Torelli_g^1(k)$ 
supported on a subsurface $\Sigma_h^1$ of genus less than $\frac{k}{2}$. To complete the proof, it thus suffices to prove 
that $\widehat{\tau}_k(\Torelli_g^1(k))\neq 0$ when $g>k$.

In the proof of Theorem~\ref{maintheorem:autnongentorelli}, we made use of the map $\InnerDer\colon \Lie(H)\to \Der(\Lie(H))$,
which is determined by $\InnerDer_1\colon H\to \Der_1(\Lie(H))$. The image of $\tau_1\colon \Torelli_g^1\to \Der_1(\Lie(H))$ 
does not contain $\InnerDer_1(H)$, but the work of Johnson~{JohnsonAbelian} shows that $\tau_1(\Torelli_g^1)$ does 
contain the image of another map $\PP_1\colon H\to \Der_1(\Lie(H))$, defined as follows.

Fix a symplectic basis $\{a_1,b_1,\ldots,a_g,b_g\}$ for $H$ and let $\omega\in \Lie_2(H)$ represent 
the algebraic intersection form $\ialg$ on $H$, so $\omega=\sum_{i=1}^g [a_i,b_i]$. Given $x\in H$, we define 
\[\PP_1(x)\coloneq\big[h\mapsto [x,h]+\ialg(h,x)\omega\big]\in  \Hom(H,\Lie_2(H))\cong \Der_1(\Lie(H)).\]
The map $\PP_1$ induces a map of Lie algebras $\PP\colon \Lie(H)\to \Der(\Lie(H))$.  We remark that $\PP$ is
not injective.  The initials ``PP'' stand for ``point-pushing'', since the image of this map
turns out to be the image under the Johnson homomorphism of the point-pushing subgroup of the mapping class group. However, neither of these facts will be necessary for our proof.

Let $L\subset H$ be the isotropic subspace $\langle a_1,\ldots,a_g\rangle$. For any $x,y \in L$ we have $\ialg(x,y) = 0$, so
\[\PP_1(x)(y) = [x,y]+\ialg(y,x)\omega=[x,y].\]
It follows by induction that for \emph{any} $\mu_1, \mu_2 \in \Lie(L)$ we have
\begin{equation}
\label{eqn:PPgeneral}
\PP(\mu_1)(\mu_2)=[\mu_1,\mu_2].
\end{equation}

Consider the element $\lambda\coloneq[[\cdots[a_1,a_2],\cdots],a_k]\in \Lie_k(L)\subset \Lie_k(H)$. The work of 
Johnson in \cite[\S 6]{JohnsonAbelian} shows that $\Image(\PP_1)\subset \tau_1(\Torelli_g^1)$; indeed, generators for 
$\Image(\PP_1)$ can be realized by genus $g-1$ bounding pairs that lie in the point-pushing subgroup. 
Since $\PP$ is a map of Lie algebras, it follows that there exists some 
$\varphi\in \Torelli_g^1(k)$ with $\tau_k(\varphi)=\PP(\lambda)$. 

As long as $g>k$ we can consider $a_{k+1}\in L$,  and from \eqref{eqn:PPgeneral} we have $\PP(\lambda)(a_{k+1})=[\lambda,a_{k+1}]$.  During the proof of Theorem~\ref{maintheorem:autnongentorelli}, we showed that $\rho([\lambda,a_{k+1}]) \neq 0$, so $\widehat{\tau}_k(\varphi)(a_{k+1}) = \rho(\PP(\lambda)(a_{k+1}))$ is nonzero. Thus $\widehat{\tau}_k(\Torelli_g^1)\neq 0$ when $g>k$, completing the proof.
\end{proof}


\noindent
\begin{tabular*}{\linewidth}[t]{@{}p{\widthof{E-mail: {\tt churchmath.stanford.edu}}+0.75in}@{}p{\linewidth - \widthof{E-mail: {\tt churchmath.stanford.edu}} - 0.75in}@{}}
{\raggedright
Thomas Church\\
Department of Mathematics\\
Stanford University\\
450 Serra Mall\\
Stanford, CA 94305\\
E-mail: \myemail{church@math.stanford.edu}}
&
{\raggedright
Andrew Putman\\
Department of Mathematics\\
Rice University, MS 136 \\
6100 Main St.\\
Houston, TX 77005\\
E-mail: \myemail{andyp@math.rice.edu}}
\end{tabular*}

\end{document}